\let\cites\cite
\newtheorem{theorem}{Theorem}[section]
\newtheorem{lemma}[theorem]{Lemma}
\newtheorem{proposition}[theorem]{Proposition}
\newtheorem{corollary}[theorem]{Corollary}
\theoremstyle{definition}
\newtheorem{definition}[theorem]{Definition}
\newtheorem{example}[theorem]{Example}
\newtheorem{remark}[theorem]{Remark}
\definecolor{internalLink}{rgb}{0.5,0,0}
\definecolor{citeLink}{rgb}{0,0.5,0}
\definecolor{urlLink}{rgb}{0,0,0.5}
\DeclareMathOperator{\id}{id}		
\def\cocolon{%
	\nobreak\mskip6mu plus1mu\mathpunct{}%
	\nonscript\mkern-\thinmuskip{:}%
	\mskip2mu\relax
}
\newcommand*{\Z}{\mathbb{Z}}				
\newcommand*{\R}{\mathbb{R}}				
\let\scalars\Bbbk
\def\invScalars{\scalars^*}
\def\Zev{\Z_{ev}}
\def\Zodd{\Z_{odd}}
\def\Zpi{\mathbb{Z}_{\pi}}
\def\scalarsLong{\@ifstar
	{\mathbb{Z}[\permMM,\permSS,\permMS^{\pm1}]/(\permMM^2{=}\permSS^2{=}1)}%
	{\mathbb{Z}[\permMM,\permSS,\permMS^{\pm1}]/(\permMM^2=\permSS^2=1)}%
}
\def\ZevLong{\@ifstar{\Zpi/(\pi{-}1)}{\Zpi/(\pi-1)}}
\def\ZoddLong{\@ifstar{\Zpi/(\pi{+}1)}{\Zpi/(\pi+1)}}
\def\ZpiLong{\@ifstar{\mathbb{Z}[\pi]/(\pi^2{-}1)}{\mathbb{Z}[\pi]/(\pi^2-1)}}
\newcommand*{\cat}[1]{\textrm{\normalfont\textbf{#1}}}		
\newcommand*{\catAdd}[1]{\mathrm{Mat}(#1)}
\newcommand*{\Mor}{\mathrm{Mor}}
\newcommand*{\F}{\mathcal{F}}
\newcommand*{\FA}{\mathcal{F}}
\newcommand*\Mod[1]{\cat{Mod}_{#1}}
\newcommand*{\cone}{C}
\def\newcobcategory#1#2{%
	\expandafter\def\csname #1\endcsname{#2}
	\expandafter\def\csname #1L\endcsname{#2_{/\ell}}
	\expandafter\def\csname #1D\endcsname{#2_{\bullet}}
}
\def\vvvv#1,{v_{#1}\vvvvv}
\def\vvvvv#1,{%
	\ifx\endvvv#1\relax\else
		\otimes\ifx*#1\else v_{#1}\fi
	\expandafter\vvvvv\fi}
\newlength\fntinkwidth
\newlength\fntsize
\newlength\fntshift
\newbox\fntbox
\def\defineMathSymbol#1(#2)(#3)[#4]#5{%
	\expandafter\edef\csname fntMath#1\endcsname{\noexpand\mathchoice
			{\expandafter\noexpand\csname fntMath#1@draw\endcsname\noexpand\textfont\noexpand\displaystyle}%
			{\expandafter\noexpand\csname fntMath#1@draw\endcsname\noexpand\textfont\noexpand\textstyle}%
			{\expandafter\noexpand\csname fntMath#1@draw\endcsname\noexpand\scriptfont\noexpand\scriptstyle}%
			{\expandafter\noexpand\csname fntMath#1@draw\endcsname\noexpand\scriptscriptfont\noexpand\scriptscriptstyle}%
	}%
	\expandafter\def\csname fntMath#1@draw\endcsname##1##2{\begingroup
		\fntinkwidth=\fontdimen8##13\relax
		\setbox\fntbox=\hbox{$\m@th##2\mathrm{M}$}%
		\fntsize = \wd\fntbox
		\psset{unit=\fntsize,linewidth=\fntinkwidth}%
		\begin{pspicture}[shift=#4](#2)(#3)#5\end{pspicture}%
	\endgroup}
	\edef\next{\noexpand\DeclareMathOperator\expandafter\noexpand\csname#1\endcsname{\expandafter\noexpand\csname fntMath#1\endcsname}}\next
}
\def\KhBracket{\@ifstar\KhBracketScaled\KhBracketSimple}
\def\KhCube{\@ifstar\KhCubeScaled\KhCubeSimple}
\def\KhCubeSigned{\@ifstar\KhCubeSignedScaled\KhCubeSignedSimple}
\def\KhCubeGraded{\@ifstar\KhCubeGradedScaled\KhCubeGradedSimple}
\def\KhCubeGradedSigned{\@ifstar\KhCubeGradedSignedScaled\KhCubeGradedSignedSimple}
\newcommand*{\KhCubeScaled}[1]{\mathcal{I}\left(#1\right)}
\newcommand*{\KhCubeSimple}[1]{\mathcal{I}(#1)}
\newcommand*{\KhCubeSignedScaled}[2]{\mathcal{I}^{#2}\left(#1\right)}
\newcommand*{\KhCubeSignedSimple}[2]{\mathcal{I}^{#2}(#1)}
\newcommand*{\KhCubeGradedScaled}[1]{\mathcal{I}_{gr}\left(#1\right)}
\newcommand*{\KhCubeGradedSimple}[1]{\mathcal{I}_{gr}(#1)}
\newcommand*{\KhCubeGradedSignedScaled}[2]{\mathcal{I}_{gr}^{#2}\left(#1\right)}
\newcommand*{\KhCubeGradedSignedSimple}[2]{\mathcal{I}_{gr}^{#2}(#1)}
\newcommand*{\KhBracketScaled}[1]{\left\llbracket#1\right\rrbracket}
\newcommand*{\KhBracketSimple}[1]{\llbracket#1\rrbracket}
\newcommand*{\KhCom}{K\!h}
\newcommand*{\Kh}{\mathcal{H}}
\newcommand*{\EKh}{\Kh_{ev}}
\newcommand*{\OKh}{\Kh_{odd}}
\let\chdeg\deg
\def\sdeg{\mathrm{sdeg}}
\def\sdegCob[#1){\sdeg\left(\textcobordism[#1)\right)}
\def\permMM{X}
\def\permSS{Y}
\def\permMS{Z}
\def\permSM{Z^{-1}}
\def\permTpos{1}
\def\permTneg{\permMM\permSS}
\def\arXiv{\@ifstar\arXiv@@\arXiv@}
\def\arXiv@#1{\href{http://front.math.ucdavis.edu/#1}{arXiv:#1}}
\def\arXiv@@#1#2{\href{http://front.math.ucdavis.edu/#2}{arXiv:#1}}
\def\proofpart#1{\medskip\noindent\textit{#1.}\space\ignorespaces}%
\def\defPicSmallCob#1#2{%
	\expandafter\def\csname fntCob#1\endcsname{%
		\begin{centerpict}(-0.1,0)(1.1,1.2)
			\psset{linewidth=0.15pt,dotsep=1pt,dash=1pt 1.5pt,linestyle=dotted,dimen=mid}
			\psset{linewidth=0.3pt,linestyle=solid}#2
		\end{centerpict}}%
}
\def\defPicCob#1#2{\expandafter\def\csname cobInsert@#1\endcsname{#2}}%
\def\psdash#1{#1[linestyle=dashed,linewidth=0.15pt]}
\def\insertCob#1{\begin{centerpict}(-0.1,0)(1.3,2)
	\psset{xunit=1.2\psunit,linewidth=0.15pt,dotsep=1pt,dash=1pt 1pt,linestyle=dotted,dimen=mid}
	\psellipse(0.5,0.2)(0.5,0.2)\psline(0,0.2)(0,1.8)
	\psellipse(0.5,1.8)(0.5,0.2)\psline(1,0.2)(1,1.8)
	\psset{linewidth=0.4pt,linestyle=solid}
	\csname cobInsert@#1\endcsname
\end{centerpict}}%
\def\fntCob#1{\csname fntCob#1\endcsname}
\def\cobRIhtop{%
	\psellipticarc(0.65,0)(0.15,0.1){-90}{90}
	\psbezier(0.13523, 0.13679)(0.45,-0.10)(0.3 , 0.1)(0.65, 0.1)
	\psbezier(0.27639,-0.17889)(0.35, 0.05)(0.45,-0.1)(0.65,-0.1)
}
\def\cobRIvtop{%
	\psellipse(0.65,0)(0.15,0.1)
	\psbezier(0.27639,-0.17889)(0.4,0)(0.4,0)(0.13523,0.13679)
}
\def\cobRIhbottom{%
	\psellipticarc(0.65,0)(0.15,0.1){-90}{0}
	\psbezier(0.27639,-0.17889)(0.35,0.05)(0.45,-0.1)(0.65,-0.1)
	\psclip{\psframe[linestyle=none](0,-0.2)(0.277,1)}
		\psbezier(0.13523,0.13679)(0.45,-0.1)(0.3,0.1)(0.65,0.1)
	\endpsclip
	\psdash\psellipticarc(0.65,0)(0.15,0.1){0}{90}
	\psclip{\psframe[linestyle=none](1,-0.2)(0.276,1)}
		\psdash\psbezier(0.13523,0.13679)(0.45,-0.1)(0.3,0.1)(0.65,0.1)
	\endpsclip
}
\def\cobRIvbottom{%
	\psellipticarc(0.65,0)(0.15,0.1){-180}{0}
	\psdash\psellipticarc(0.65,0)(0.15,0.1){0}{180}
	\psclip{\pspolygon[linestyle=none](0,-0.2)(0.4,-0.2)(0.4,-0.028)(0.277,-0.028)(0.277,0.2)(0,0.2)}
		\psbezier(0.27639,-0.17889)(0.4,0)(0.4,0)(0.13523,0.13679)
	\endpsclip
	\psclip{\psframe[linestyle=none](0.2763,-0.0284)(0.3564,0.2)}
		\psdash\psbezier(0.27639,-0.17889)(0.4,0)(0.4,0)(0.13523,0.13679)
	\endpsclip
}
\def\cobRIFourTubes{%
	\rput(0,0.2)\cobRIvbottom
	\rput(0,1.8)\cobRIvtop
	\psline(0.27639,0.02111)(0.27639,1.62111)
	\psline(0.13523,0.33679)(0.13523,1.93679)
	\psline(0.35637,1.77164)(0.35637,0.6)
	\psellipticarc(0.85,0.8)(0.08,0.05){-180}{0}
	\psellipticarc(0.55,0.8)(0.08,0.05){-180}{0}
	\psdash\psellipticarc(0.85,0.8)(0.08,0.05){0}{180}
	\psdash\psellipticarc(0.55,0.8)(0.08,0.05){0}{180}
	\psbezier(0.35637,0.6)(0.35637,0.5)(0.47,0.6)(0.47,0.8)
	\psbezier(0.35637,0.2)(0.35637,0.3)(0.63,0.4)(0.63,0.8)
	\psbezier(0.5,0.2)(0.5,0.3)(0.77,0.4)(0.77,0.8)
	\psbezier(0.8,0.2)(0.8,0.3)(0.93,0.4)(0.93,0.8)
	\psellipticarc(0.85,1.4)(0.08,0.05){-180}{0}
	\psellipticarc(0.55,1.4)(0.08,0.05){-180}{0}
	\psdash\psellipticarc(0.85,1.4)(0.08,0.05){0}{180}
	\psdash\psellipticarc(0.55,1.4)(0.08,0.05){0}{180}
	\psbezier(0.5,1.8)(0.5,1.6)(0.47,1.6)(0.47,1.4)
	\psbezier(0.8,1.8)(0.8,1.6)(0.93,1.6)(0.93,1.4)
	\psellipticarc(0.7,1.4)(0.07,0.15){0}{180}
}
\def\cobRIIhT{%
	\psbezier(0.20,-0.160)(0.52,-0.03)(0.52,-0.03)(0.95,-0.087)
	\psbezier(0.05, 0.087)(0.48, 0.03)(0.48, 0.03)(0.80, 0.160)
}
\def\cobRIIvvT{%
	\psbezier(0.2,-0.16)(0.3,-0.05)(0.25,0)(0.05, 0.087)
	\psbezier(0.8, 0.16)(0.7, 0.05)(0.75,0)(0.95,-0.087)
	\psbezier(0.45,0.07)(0.61,0.10)(0.71,-0.04)(0.55,-0.07)
	\psbezier(0.45,0.07)(0.29,0.04)(0.39,-0.10)(0.55,-0.07)
}
\def\cobRIIvhT{%
	\psbezier(0.2,-0.16)(0.3,-0.05)(0.25,0)(0.05, 0.087)
	\psbezier(0.45,0.07)(0.29,0.04)(0.39,-0.10)(0.55,-0.07)
	\psbezier(0.80, 0.160)(0.75,0.00)(0.61, 0.10)(0.45, 0.07)
	\psbezier(0.95,-0.087)(0.73,0.06)(0.71,-0.04)(0.55,-0.07)
}
\def\cobRIIhvT{%
	\psbezier(0.8, 0.16)(0.7, 0.05)(0.75,0)(0.95,-0.087)
	\psbezier(0.45,0.07)(0.61,0.10)(0.71,-0.04)(0.55,-0.07)
	\psbezier(0.20,-0.160)(0.25, 0.00)(0.39,-0.10)(0.55,-0.07)
	\psbezier(0.05, 0.087)(0.27,-0.06)(0.29, 0.04)(0.45, 0.07)
}
\def\cobRIIhhT{%
	\psbezier(0.80, 0.160)(0.75,0.00)(0.61, 0.10)(0.45, 0.07)
	\psbezier(0.95,-0.087)(0.73,0.06)(0.71,-0.04)(0.55,-0.07)
	\psbezier(0.20,-0.160)(0.25, 0.00)(0.39,-0.10)(0.55,-0.07)
	\psbezier(0.05, 0.087)(0.27,-0.06)(0.29, 0.04)(0.45, 0.07)
}
\def\cobRIIhB{%
	\psbezier(0.20,-0.160)(0.52,-0.03)(0.52,-0.03)(0.95,-0.087)
	\psclip{\psframe[linestyle=none](0,-0.2)(0.2,0.2)}%
		\psbezier(0.05, 0.087)(0.48, 0.03)(0.48, 0.03)(0.80, 0.160)
	\endpsclip
	\psclip{\psframe[linestyle=none](0.2,-0.2)(1,0.2)}%
		\psdash\psbezier(0.05, 0.087)(0.48, 0.03)(0.48, 0.03)(0.80, 0.160)
	\endpsclip
}
\def\cobRIIvvB{%
	\psclip{\pspolygon[linestyle=none](0,-0.2)(0,0.2)(0.2,0.2)(0.2,-0.0664)(0.25,-0.0664)(0.368,0)(0.631,0)(0.75,0.0664)(1,0.0664)(1,-0.2)}%
		\psbezier(0.2,-0.16)(0.3,-0.05)(0.25,0)(0.05, 0.087)
		\psbezier(0.8, 0.16)(0.7, 0.05)(0.75,0)(0.95,-0.087)
		\psbezier(0.45,0.07)(0.61,0.10)(0.71,-0.04)(0.55,-0.07)
		\psbezier(0.45,0.07)(0.29,0.04)(0.39,-0.10)(0.55,-0.07)
	\endpsclip
	\psclip{\pspolygon[linestyle=none](0.2,0.2)(0.2,-0.0664)(0.25,-0.0664)(0.368,0)(0.631,0)(0.75,0.0664)(1,0.0664)(1,0.2)}
		\psdash\psbezier(0.2,-0.16)(0.3,-0.05)(0.25,0)(0.05, 0.087)
		\psdash\psbezier(0.8, 0.16)(0.7, 0.05)(0.75,0)(0.95,-0.087)
		\psdash\psbezier(0.45,0.07)(0.61,0.10)(0.71,-0.04)(0.55,-0.07)
		\psdash\psbezier(0.45,0.07)(0.29,0.04)(0.39,-0.10)(0.55,-0.07)
	\endpsclip
}
\def\cobRIIvhB{%
	\psclip{\pspolygon[linestyle=none](0,0.2)(0.2,0.2)(0.2,-0.0664)(0.25,-0.0664)(0.368,0)(1,0.11)(1,-0.2)(0,-0.2)}
		\psbezier(0.2,-0.16)(0.3,-0.05)(0.25,0)(0.05, 0.087)
		\psbezier(0.45,0.07)(0.29,0.04)(0.39,-0.10)(0.55,-0.07)
		\psbezier(0.95,-0.087)(0.73,0.06)(0.71,-0.04)(0.55,-0.07)
	\endpsclip
	\psclip{\pspolygon[linestyle=none](0.2,0.2)(0.2,-0.0664)(0.25,-0.0664)(0.368,0)(1,0.11)(1,0.2)}
		\psdash\psbezier(0.2,-0.16)(0.3,-0.05)(0.25,0)(0.05, 0.087)
		\psdash\psbezier(0.45,0.07)(0.29,0.04)(0.39,-0.10)(0.55,-0.07)
		\psdash\psbezier(0.80, 0.160)(0.75,0.00)(0.61, 0.10)(0.45, 0.07)
	\endpsclip
}
\def\cobRIIhvB{%
	\psclip{\pspolygon[linestyle=none](1,-0.2)(1,0.0664)(0.75,0.0664)(0.631,0)(0.2,-0.02)(0.2,0.2)(0,0.2)(0,-0.2)}
		\psbezier(0.8, 0.16)(0.7, 0.05)(0.75,0)(0.95,-0.087)
		\psbezier(0.45,0.07)(0.61,0.10)(0.71,-0.04)(0.55,-0.07)
		\psbezier(0.20,-0.160)(0.25, 0.00)(0.39,-0.10)(0.55,-0.07)
		\psbezier(0.05, 0.087)(0.27,-0.06)(0.29, 0.04)(0.45, 0.07)
	\endpsclip
	\psclip{\pspolygon[linestyle=none](1,0.2)(1,0.0664)(0.75,0.0664)(0.631,0)(0.2,-0.11)(0.2,0.2)}
		\psdash\psbezier(0.8, 0.16)(0.7, 0.05)(0.75,0)(0.95,-0.087)
		\psdash\psbezier(0.45,0.07)(0.61,0.10)(0.71,-0.04)(0.55,-0.07)
		\psdash\psbezier(0.05, 0.087)(0.27,-0.06)(0.29, 0.04)(0.45, 0.07)
	\endpsclip
}
\def\cobRIIhhB{%
	\psbezier(0.95,-0.087)(0.73,0.06)(0.71,-0.04)(0.55,-0.07)
	\psbezier(0.20,-0.160)(0.25, 0.00)(0.39,-0.10)(0.55,-0.07)
	\psclip{\psframe[linestyle=none](0,0.2)(0.2,-0.2)}
		\psbezier(0.05, 0.087)(0.27,-0.06)(0.29, 0.04)(0.45, 0.07)
	\endpsclip
	\psdash\psbezier(0.80, 0.160)(0.75,0.00)(0.61, 0.10)(0.45, 0.07)
	\psclip{\psframe[linestyle=none](0.2,0.2)(1,-0.2)}
		\psdash\psbezier(0.05, 0.087)(0.27,-0.06)(0.29, 0.04)(0.45, 0.07)
	\endpsclip
}
\def\cobRIIFourTubes{%
	\rput(0,0.2)\cobRIIvvB
	\rput(0,1.8)\cobRIIvvT
	\psline(0.05, 0.287)(0.05, 1.887)
	\psline(0.20, 0.040)(0.20, 1.640)
	\psdash\psline(0.80, 0.360)(0.80, 1.797)
	\psline(0.80, 1.797)(0.80, 1.960)
	\psline(0.95, 0.113)(0.95, 1.713)
	\psline(0.25,1.7336)(0.25,0.8)
	\psline(0.75,0.2664)(0.75,1.3)
	\psellipticarc(0.39,1.3)(0.06,0.04){-180}{0}
	\psellipticarc(0.61,1.3)(0.06,0.04){-180}{0}
	\psdash\psellipticarc(0.39,1.3)(0.06,0.04){0}{180}
	\psdash\psellipticarc(0.61,1.3)(0.06,0.04){0}{180}
	\psbezier(0.361,1.8)(0.361,1.5)(0.33,1.6)(0.33,1.3)
	\psbezier(0.639,1.8)(0.639,1.6)(0.45,1.5)(0.45,1.3)
	\psbezier(0.75,1.8664)(0.75,1.5)(0.55,1.5)(0.55,1.3)
	\psbezier(0.75,1.3)(0.75,1.6)(0.67,1.4)(0.67,1.3)
	\psellipticarc(0.39,0.8)(0.06,0.04){-180}{0}
	\psellipticarc(0.61,0.8)(0.06,0.04){-180}{0}
	\psdash\psellipticarc(0.39,0.8)(0.06,0.04){0}{180}
	\psdash\psellipticarc(0.61,0.8)(0.06,0.04){0}{180}
	\psbezier(0.361,0.2)(0.361,0.6)(0.33,0.5)(0.33,0.8)
	\psbezier(0.45,0.8)(0.45,0.5)(0.639,0.4)(0.639,0.2)
	\psclip{\pspolygon[linestyle=none](0.36,0.3)(0.35,0.7)(0.42,0.7)(0.62,0.3)}
		\psdash\psbezier(0.25,0.1336)(0.25,0.4)(0.67,0.4)(0.67,0.8)
		\psdash\psbezier(0.25,0.8)(0.25,0.2)(0.55,0.6)(0.55,0.8)
	\endpsclip
	\psclip{\pspolygon[linestyle=none](0,-0.2)(0,1)(0.35,1)(0.35,0.7)(0.36,0.3)(0.62,0.3)(0.42,0.7)(0.42,1)(1,1)(1,-0.2)}
		\psbezier(0.25,0.1336)(0.25,0.4)(0.67,0.4)(0.67,0.8)
		\psbezier(0.25,0.8)(0.25,0.2)(0.55,0.6)(0.55,0.8)
	\endpsclip
}
\def\pictRelS{\begin{centerpict}(-0.6,-0.5)(0.6,0.5)
	\psset{linewidth=0.5pt,dash=1pt 1.5pt,dimen=middle}%
	\pscircle(0,0){0.5}
	\psellipticarc(0,0)(0.5,0.15){-180}{0}
	\psdash\psellipticarc(0,0)(0.5,0.15){0}{180}
\end{centerpict}}
\def\pictRelT{\begin{centerpict}(-0.1,-0.7)(1.3,0.7)
	\psset{linewidth=0.5pt,dash=1pt 1.5pt,dimen=middle}
	\psdash\psellipticarc(0.2,0)(0.2,0.1){0}{180}
	\psdash\psellipticarc(1,0)(0.2,0.1){0}{180}
	\psellipticarc(0.2,0)(0.2,0.1){-180}{0}
	\psellipticarc(1,0)(0.2,0.1){-180}{0}
	\psbezier(0.0,0)(0.00,0.9)(1.20,0.9)(1.2,0)
	\psbezier(0.4,0)(0.35,0.4)(0.85,0.4)(0.8,0)
	\psline[linewidth=0.3pt,arrowsize=3pt]{<-}(0.45,0.1)(0.75,0.1)
	\psbezier(0.0,0)(0.00,-0.9)(1.20,-0.9)(1.2,0)
	\psbezier(0.4,0)(0.35,-0.4)(0.85,-0.4)(0.8,0)
	\psline[linewidth=0.3pt,border=1pt,arrowsize=3pt]{->}(0.40,-0.4)(0.70,-0.1)
\end{centerpict}}
\def\pictRelTuCircles{%
	\psset{linewidth=0.5pt,dash=1pt 1.5pt,dimen=middle}
	\psellipticarc(0.2,0.1)(0.2,0.07\psxunit){-180}{0}
	\psellipticarc(1.0,0.1)(0.2,0.07\psxunit){-180}{0}
	\psdash\psellipticarc(0.2,0.1)(0.2,0.07\psxunit){0}{180}
	\psdash\psellipticarc(1.0,0.1)(0.2,0.07\psxunit){0}{180}
	\psellipse(0.2,1.4)(0.2,0.07\psxunit)
	\psellipse(1.0,1.4)(0.2,0.07\psxunit)
}
\def\pictRelTuL{\begin{centerpict}(-0.1,0)(1.3,1.5)
	\pictRelTuCircles
	\psline(0.0,0.1)(0.0,1.4)\psline(0.4,0.1)(0.4,1.4)
	\psbezier(0.8,0.1)(0.8,0.7)(1.2,0.7)(1.2,0.1)
	\psbezier(0.8,1.4)(0.8,0.8)(1.2,0.8)(1.2,1.4)
\end{centerpict}}
\def\pictRelTuR{\begin{centerpict}(-0.1,0)(1.3,1.5)
	\pictRelTuCircles
	\psbezier(0.0,0.1)(0.0,0.7)(0.4,0.7)(0.4,0.1)
	\psbezier(0.0,1.4)(0.0,0.8)(0.4,0.8)(0.4,1.4)
	\psline(0.8,0.1)(0.8,1.4)\psline(1.2,0.1)(1.2,1.4)
\end{centerpict}}
\def\pictRelTuB{\begin{centerpict}(-0.1,0)(1.3,1.5)
	\pictRelTuCircles
	\psbezier(0.0,0.1)(0.0,1.0)(1.2,1.0)(1.2,0.1)
	\psbezier(0.4,0.1)(0.35,0.5)(0.85,0.5)(0.8,0.1)
	\psline[linewidth=0.3pt,arrowsize=3pt]{<-}(0.45,0.2)(0.75,0.2)
	\psbezier(0.0,1.4)(0.0,1.0)(0.4,1.0)(0.4,1.4)
	\psbezier(0.8,1.4)(0.8,0.8)(1.2,0.8)(1.2,1.4)
\end{centerpict}}
\def\pictRelTuT{\begin{centerpict}(-0.1,0)(1.3,1.5)
	\pictRelTuCircles
	\psbezier(0.0,1.4)(0.0,0.5)(1.2,0.5)(1.2,1.4)
	\psbezier(0.4,1.4)(0.35,1.0)(0.85,1.0)(0.8,1.4)
	\psline[linewidth=0.3pt,border=1pt,arrowsize=3pt]{->}(0.40,1.0)(0.70,1.3)
	\psbezier(0.0,0.1)(0.0,0.5)(0.4,0.5)(0.4,0.1)
	\psbezier(0.8,0.1)(0.8,0.7)(1.2,0.7)(1.2,0.1)
\end{centerpict}}
		\psdash\pscustom{%
			\moveto(0.182,0.356)
			\curveto(0.375,0.262)(0.644,0.348)(0.800,0.270) 
			\curveto(0.956,0.192)(0.625,0.138)(0.812,0.044)
			\stroke}%
\def\pictPassTotalBdry#1{
	\psset{xunit=1.2\psunit,linewidth=0.4pt,dotsep=1pt,%
	       dash=1pt 1pt,linestyle=solid,dimen=mid}%
	\psellipticarc(0.35,0.2)(0.15,0.1){-180}{0}
	\psdash\psellipticarc(0.35,0.2)(0.15,0.1){0}{180}
	\psbezier(0.72361,0.37889)(0.6,0.2)(0.6,0.2)(0.86477,0.06321)
	\psclip{\psframe[linestyle=none,fillstyle=solid,fillcolor=white]%
				(0.6,0.2284)(0.8,0.5)}%
		\psdash\psbezier(0.72361,0.37889)(0.6,0.2)(0.6,0.2)(0.86477,0.06321)
	\endpsclip
	\rput(0,#1){%
		\psellipse(0.35,0.2)(0.15,0.1)
		\psbezier(0.72361,0.37889)(0.6,0.2)(0.6,0.2)(0.86477,0.06321)
		\psline{c-c}(0.72361,0.15   )(0.72361,0.37889)
	}%
	\rput(0.72361,0.15){\psdash\psline{c-c}(0,0.22889)(0,#1)}
	\rput(0.86477,0.06321){\psline{c-c}(0,0)(0,#1)}
}
\def\pictPassLower{\begin{centerpict}(-0.1,0)(1.3,1.4)
	\pictPassTotalBdry{1}%
	\psbezier(0.2,1.2)(0.2,0.8)(0.5,0.8)(0.5,1.2)
	\psbezier(0.2,0.2)(0.2,0.6)(0.64363,0.6)(0.64363,1.22836)
	\psbezier(0.5,0.2)(0.5,0.3)(0.64363,0.6)(0.64363,0.22836)
\end{centerpict}}
\def\pictPassUpper{\begin{centerpict}(-0.1,0)(1.3,1.4)
	\pictPassTotalBdry{1}%
	\psbezier(0.2,0.2)(0.2,0.6)(0.5,0.6)(0.5,0.2)
	\psbezier(0.2,1.2)(0.2,0.8)(0.64363,0.6)(0.64363,0.22836)
	\psbezier(0.5,1.2)(0.5,1.1)(0.64363,0.6)(0.64363,1.22836)
\end{centerpict}}
\def\pictPassId{\begin{centerpict}(-0.1,0)(1.3,1.4)
	\pictPassTotalBdry{1}%
	\psline(0.2,0.2)(0.2,1.2)
	\psline(0.5,0.2)(0.5,1.2)
	\psline(0.64363,0.22836)(0.64363,1.22836)
\end{centerpict}}
\def\pictPassTorus{\begin{centerpict}(-0.1,0)(1.3,1.4)
	\pictPassTotalBdry{1}%
	\pscustom{%
		\moveto(0.2,1.2)
		\curveto(0.2,1.0)(0.07,1.0)(0.07,0.8)
		\curveto(0.07,0.4)(0.55,0.4)(0.55,0.8)
		\curveto(0.55,1.0)(0.50,1.0)(0.50,1.2)
		\stroke}%
	\pscustom{%
		\moveto(0.23,0.8)
		\curveto(0.23,0.6)(0.39,0.6)(0.39,0.8)
		\curveto(0.39,1.0)(0.23,1.0)(0.23,0.8)
		\stroke}%
	\psellipticarc(0.15,0.8)(0.08,0.05){180}{360}
	\psellipticarc(0.47,0.8)(0.08,0.05){180}{360}
	\psdash\psellipticarc(0.15,0.8)(0.08,0.05){0}{180}
	\psdash\psellipticarc(0.47,0.8)(0.08,0.05){0}{180}
	\psbezier(0.2,0.2)(0.2,0.5)(0.5,0.5)(0.5,0.2)
	\psline(0.64363,0.22836)(0.64363,1.22836)
\end{centerpict}}
\def\pictPassTubeDiag{%
	\pictPassTotalBdry{1.8}%
	\psellipticarc(0.10,0.8)(0.08,0.05){180}{360}
	\psellipticarc(0.42,0.8)(0.08,0.05){180}{360}
	\psellipticarc(0.10,1.5)(0.08,0.05){180}{360}
	\psellipticarc(0.42,1.5)(0.08,0.05){180}{360}
	\psdash\psellipticarc(0.10,0.8)(0.08,0.05){0}{180}
	\psdash\psellipticarc(0.42,0.8)(0.08,0.05){0}{180}
	\psdash\psellipticarc(0.10,1.5)(0.08,0.05){0}{180}
	\psdash\psellipticarc(0.42,1.5)(0.08,0.05){0}{180}
	\psbezier(0.02,0.8)(0.02,0.4)(0.2,0.4)(0.2,0.2)
	\psbezier(0.18,0.8)(0.18,0.5)(0.5,0.5)(0.5,0.2)
	\psline(0.5,1.5)(0.5,2)
	\psbezier(0.02,1.5)(0.02,1.8)(0.2 ,1.8)(0.2 ,2  )
	\psbezier(0.18,1.5)(0.18,1.7)(0.34,1.7)(0.34,1.5)
	\psbezier{c-c}(0.64363,0.22836)(0.64363,0.5)(0.34,0.5)(0.34,0.8)
	\psbezier{c-c}(0.5,0.8)(0.5,0.7)(0.64363,0.4)(0.64363,0.8)
	\psline{c-c}(0.64363,0.8)(0.64363,2.02836)
}
\def\pictPassLowerTu{\begin{centerpict}(-0.1,0)(1.3,2.2)
	\pictPassTubeDiag
	\psbezier(0.02,1.5)(0.02,1.25)(0.18,1.25)(0.18,1.5)
	\psbezier(0.34,1.5)(0.34,1.15)(0.50,1.15)(0.50,1.5)
	\psbezier(0.02,0.8)(0.02,1.25)(0.50,1.25)(0.50,0.8)
	\psbezier(0.18,0.8)(0.18,1.0 )(0.34,1.0 )(0.34,0.8)
\end{centerpict}}
\def\pictPassUpperTu{\begin{centerpict}(-0.1,0)(1.3,2.2)
	\pictPassTubeDiag
	\psbezier(0.02,1.5)(0.02,1.2)(0.18,1.2)(0.18,1.5)
	\psbezier(0.02,0.8)(0.02,1.1)(0.18,1.1)(0.18,0.8)
	\psline(0.34,0.8)(0.34,1.5)
	\psline(0.50,0.8)(0.50,1.5)
\end{centerpict}}
\def\pictPassIdTu{\begin{centerpict}(-0.1,0)(1.3,2.2)
	\pictPassTubeDiag
	\psbezier(0.34,1.5)(0.34,1.2)(0.5,1.2)(0.5,1.5)
	\psbezier(0.34,0.8)(0.34,1.1)(0.5,1.1)(0.5,0.8)
	\psline(0.02,0.8)(0.02,1.5)
	\psline(0.18,0.8)(0.18,1.5)
\end{centerpict}}
\def\pictPassTorusTu{\begin{centerpict}(-0.1,0)(1.3,2.2)
	\pictPassTubeDiag
	\psbezier(0.02,1.5)(0.02,1.05)(0.50,1.05)(0.50,1.5)
	\psbezier(0.18,1.5)(0.18,1.3 )(0.34,1.3 )(0.34,1.5)
	\psbezier(0.02,0.8)(0.02,1.05)(0.18,1.05)(0.18,0.8)
	\psbezier(0.34,0.8)(0.34,1.15)(0.50,1.15)(0.50,0.8)
\end{centerpict}}
\def\pictActionBottomDiff{%
	\psclip{\psframe[linestyle=none](2.4,0)(1,0.5)}%
		\psdash\psbezier(2.4,-0.08)(1.7,-0.08)(2,0.08)(2.7,0.08)
	\endpsclip
	\psclip{\pspolygon[linestyle=none](2.4,0)(1,0)(1,-1)(3,-1)(3,1)(2.4,1)}%
		\psbezier(2.4,-0.08)(1.7,-0.08)(2,0.08)(2.7,0.08)
	\endpsclip
	\psclip{\pspolygon[linestyle=none]%
			(7.4,-0.024)(6.5,-0.024)(6.4,0.024)(5,0.024)(5,0.2)(7.4,0.2)}%
		\psdash\psbezier( 5.3,-0.08)( 6,-0.08)(6.3,0.08)( 5.6,0.08)
		\psdash\psbezier(7.4,-0.08)(6.7,-0.08)(7  ,0.08)(7.7, 0.08)
	\endpsclip
	\psclip{\pspolygon[linestyle=none]%
			(7.4,-0.024)(6.5,-0.024)(6.4,0.024)(5,0.024)(5,-1)(8,-1)(8,0.5)(7.4,0.5)}%
		\psbezier( 5.3,-0.08)( 6,-0.08)(6.3,0.08)( 5.6,0.08)
		\psbezier(7.4,-0.08)(6.7,-0.08)(7  ,0.08)(7.7, 0.08)
	\endpsclip
}
\def\pictActionTopDiff{%
	\psbezier(2.4,-0.08)(1.7,-0.08)(2, 0.08)(2.7, 0.08)
	\psbezier(5.3,-0.08)(6.1,-0.02)(6.6,-0.02)(7.4,-0.08)
	\psbezier(5.6, 0.08)(6.4, 0.02)(6.9, 0.02)(7.7, 0.08)
}
\def\pictActionTopDiffEmb{%
	\psclip{\psframe[linestyle=none](2.4,0)(1,0.5)}%
		\psdash\psbezier(2.4,-0.08)(1.7,-0.08)(2,0.08)(2.7,0.08)
	\endpsclip
	\psclip{\pspolygon[linestyle=none](2.4,0)(1,0)(1,-1)(3,-1)(3,1)(2.4,1)}%
		\psbezier(2.4,-0.08)(1.7,-0.08)(2,0.08)(2.7,0.08)
	\endpsclip
	\psbezier(5.3,-0.08)(6.1,-0.02)(6.6,-0.02)(7.4,-0.08)
	\psclip{\psframe[linestyle=none](5,-0.5)(7.4,0.5)}%
		\psdash\psbezier(5.6, 0.08)(6.4, 0.02)(6.9, 0.02)(7.7, 0.08)
	\endpsclip
	\psclip{\psframe[linestyle=none](7.4,-0.5)(8,0.5)}%
		\psbezier(5.6, 0.08)(6.4, 0.02)(6.9, 0.02)(7.7, 0.08)
	\endpsclip
}
\def\pictActionDegShiftLower#1{\begingroup
	\psset{linecolor=shift,linewidth=1.5pt}%
	\pictActionBottomDiff
	\psline[linestyle=dashed](1.3, 0   )(2  ,0    )
	\psset{linewidth=1pt}%
	\psline[linestyle=dashed](2.4,-0.08)(5.3,-0.08)
	\psline[linestyle=dashed](2.7, 0.08)(5.3, 0.08)
	\psline[linestyle=dashed](7.4,-0.08)(8.4,-0.08)
	\psline[linestyle=dashed](7.7, 0.08)(8.7, 0.08)
	\rput(4,0){\psframebox[boxsep=false,framesep=2pt,fillcolor=white,fillstyle=solid,linewidth=0.5pt]%
					{$\scriptstyle#1$}}%
\endgroup}
\def\pictActionDegShiftUpper#1{\begingroup
	\psset{linecolor=shift,linewidth=1.5pt}%
	\pictActionTopDiffEmb
	\psline[linestyle=dashed](1.3, 0   )(2  ,0    )
	\psset{linewidth=1pt}%
	\psline[linestyle=dashed](2.4,-0.08)(5.3,-0.08)
	\psline[linestyle=dashed](2.7, 0.08)(5.3, 0.08)
	\psline[linestyle=dashed](7.4,-0.08)(8.4,-0.08)
	\psline[linestyle=dashed](7.7, 0.08)(8.7, 0.08)
	\rput(4,0){\psframebox[boxsep=false,framesep=2pt,fillcolor=white,fillstyle=solid,linewidth=0.5pt]%
					{$\scriptstyle#1$}}%
\endgroup}
\def\pictActionMerge#1#2{%
	\pscustom{%
		\moveto(-1,0)\lineto(-1,#1)
		\rcurveto(0,1.1)(3,0.9)(3,1.5)
		\lineto(2,#2)\stroke}%
	\pscustom{%
		\moveto(0,0)\lineto(0,#1)
		\rcurveto(0,0.4)(1,0.5)(1.5,0.6)
		\rcurveto(0.75,0.15)(0.5,0.4)(0.5,-1.1)
		\lineto(2,0)\stroke}%
	\pscustom{\moveto(2.4,-0.08)\rlineto(0,#2)\stroke}%
	\pscustom{\moveto(2.7, 0.08)\rlineto(0,#2)\stroke}%
	\rput(0,#1){\psarrow(1.9,0.4)(1.2,0.45)}%
}
\def\pictActionSaddle#1#2{%
	\rput(0,-0.08){%
		\psclip{\psframe[linestyle=none](5,#2)(6,-1)}%
			\psdash\pscustom{\moveto(5.6,0.16)\rlineto(0,#2)\stroke}%
		\endpsclip
		\psclip{\psframe[linestyle=none](5,#2)(6,5)}%
			\pscustom{\moveto(5.6,0.16)\rlineto(0,#2)\stroke}%
		\endpsclip
	}%
	\pscustom{\moveto(5.3,-0.08)\rlineto(0,#2)}%
	\pscustom{\moveto(7.4,-0.08)\rlineto(0,#2)}%
	\pscustom{\moveto(7.7, 0.08)\rlineto(0,#2)}%
	\pscustom{%
		\moveto(6,0.024)\lineto(6,#1)
		\rcurveto(0,0.8)(1,0.8)(1,-0.048)
		\lineto(7,-0.024)\stroke}%
}
\def\pictActionFirst{\begin{centerpict}(-1.5,-0.15)(9.2,4.15)
	\rput(-1,0)\COBembcircle
	\rput(0,0  )\pictActionBottomDiff
	\rput(0,0.5){\pictActionDegShiftLower{-a}}%
	\rput(0,1.5){\pictActionDegShiftUpper{a'}}%
	\rput(0,2.0){\pictActionDegShiftUpper{-a'}}%
	\rput(0,3.5){\pictActionDegShiftUpper{a'}}%
	\rput(0,4.0)\pictActionTopDiff
	\pictActionMerge{2}{4}%
	\rput{-20}(-1,2){\COBposShift{x=0.4,y=0.15}(0,0)}%
	\pictActionSaddle{0.5}{4}%
\end{centerpict}}
\def\pictActionSecond{\begin{centerpict}(-1.5,-0.15)(9.2,4.15)
	\rput(-1,0)\COBembcircle
	\rput(0,0  )\pictActionBottomDiff
	\rput(0,0.5){\pictActionDegShiftLower{-a}}%
	\rput(0,3.5){\pictActionDegShiftUpper{a'}}%
	\rput(0,4.0)\pictActionTopDiff
	\pictActionMerge{2}{4}%
	\COBposShift{y=0}(-1,2)%
	\pictActionSaddle{0.75}{4}%
\end{centerpict}}
\def\pictActionThird{\begin{centerpict}(-1.5,-0.15)(9.2,4.15)
	\rput(-1,0)\COBembcircle
	\rput(0,0  )\pictActionBottomDiff
	\rput(0,0.5){\pictActionDegShiftLower{-a}}%
	\rput(0,3.5){\pictActionDegShiftUpper{a'}}%
	\rput(0,4.0)\pictActionTopDiff
	\pictActionMerge{1}{4}%
	\COBposShift{y=0}(-1,1)%
	\pictActionSaddle{2.5}{4}%
\end{centerpict}}
\def\pictActionForth{\begin{centerpict}(-1.5,-0.15)(9.2,4.15)
	\rput(-1,0)\COBembcircle
	\rput(0,0  )\pictActionBottomDiff
	\rput(0,0.5){\pictActionDegShiftLower{-a}}%
	\rput(0,2.0){\pictActionDegShiftLower{a}}%
	\rput(0,2.5){\pictActionDegShiftLower{-a}}%
	\rput(0,3.5){\pictActionDegShiftUpper{a'}}%
	\rput(0,4.0)\pictActionTopDiff
	\pictActionMerge{0.5}{4}%
	\rput{-20}(-1,0.5){\COBposShift{x=0.4,y=0.15}(0,0)}%
	\pictActionSaddle{2.7}{4}%
\end{centerpict}}
\def\trefoil#1#2#3{%
	\begin{centerpict}(-1,-0.8)(1,1.1)%
		\psbezier(0,1)( 0.5,1)( 0.59641,0.23302)( 0.34641,-0.2)			
		\psbezier(-0.86603,-0.5)(-1.11503,-0.06698)(-0.5,0.4)(0,0.4)		
		\psbezier( 0.86603,-0.5)( 0.61603,-0.93302)(-0.09641,-0.63302)(-0.34641,-0.2)		
		\psbezier[border=4pt](0,1)(-0.5,1)(-0.59641,0.23302)(-0.34641,-0.2)			
		\psbezier[border=4pt]( 0.86603,-0.5)( 1.11503,-0.06698)( 0.5,0.4)(0,0.4)		
		\psbezier[border=4pt](-0.86603,-0.5)(-0.61603,-0.93302)( 0.09641,-0.63302)( 0.34641,-0.2)		
		\psline[linewidth=0.5pt,arrowsize=5pt,arrowinset=0.4]{->}(0.625,0.01519)(0.225,0.70801)
		\psline[linewidth=0.5pt,arrowsize=5pt,arrowinset=0.4]{->}(0.3,-0.55)(-0.5,-0.55)
		\psline[linewidth=0.5pt,arrowsize=5pt,arrowinset=0.4]{->}(-0.625,0.01519)(-0.225,0.70801)
		\uput[ 30]( 0.475, 0.275){$1$}
		\uput[-90]( 0.000,-0.550){$2$}
		\uput[150](-0.475, 0.275){$3$}
	\end{centerpict}%
}
\def\trefoilshadow{%
	\psbezier(0,1)( 0.5,1)( 0.59641,0.23302)( 0.34641,-0.2)
	\psbezier(-0.86603,-0.5)(-1.11503,-0.06698)(-0.5,0.4)(0,0.4)
	\psbezier( 0.86603,-0.5)( 0.61603,-0.93302)(-0.09641,-0.63302)(-0.34641,-0.2)
	\psbezier(0,1)(-0.5,1)(-0.59641,0.23302)(-0.34641,-0.2)
	\psbezier( 0.86603,-0.5)( 1.11503,-0.06698)( 0.5,0.4)(0,0.4)
	\psbezier(-0.86603,-0.5)(-0.61603,-0.93302)( 0.09641,-0.63302)( 0.34641,-0.2)
}
\def\trefoilresolution#1#2#3{%
	\trefoilshadow
	\begingroup
		\psset{linestyle=none,fillstyle=solid,fillcolor=white}%
		\pscircle( 0    ,-0.55){0.2}
		\pscircle( 0.475,0.275){0.2}
		\pscircle(-0.475,0.275){0.2}
	\endgroup
	\ifx1#1\relax
		\psbezier(0.47105,0.46555)(0.48679,0.33682)(0.53343,0.254  )(0.63713,0.17613)
		\psbezier(0.45367,0.0879 )(0.48132,0.21841)(0.42964,0.30766)(0.30286,0.34897)
	\else
		\psbezier(0.47105,0.46555)(0.48679,0.33682)(0.42964,0.30766)(0.30286,0.34897)
		\psbezier(0.45367,0.0879 )(0.48132,0.21841)(0.53343,0.254  )(0.63713,0.17613)
		\ifx0#1\relax\else
			\psline[linewidth=0.5pt,arrowsize=3pt,arrowinset=0.4]{->}(0.625,0.01519)(0.225,0.70801)
		\fi
	\fi
	\ifx1#2\relax
		\psbezier(-0.16765,-0.64072)(-0.04831,-0.58998)( 0.04688,-0.58917)( 0.1662 ,-0.6401 )
		\psbezier( 0.15071,-0.43686)( 0.05151,-0.52606)(-0.05151,-0.52606)(-0.15071,-0.43686)
	\else
		\psbezier(-0.16765,-0.64072)(-0.04831,-0.58998)(-0.05151,-0.52606)(-0.15071,-0.43686)
		\psbezier( 0.15071,-0.43686)( 0.05151,-0.52606)( 0.04688,-0.58917)( 0.1662 ,-0.6401 )
		\ifx0#2\relax\else
			\psline[linewidth=0.5pt,arrowsize=3pt,arrowinset=0.4]{->}(0.4,-0.55)(-0.5,-0.55)
		\fi
	\fi
	\ifx1#3\relax
		\psbezier(-0.47105,0.46555)(-0.48679,0.33682)(-0.53343,0.254  )(-0.63713,0.17613)
		\psbezier(-0.45367,0.0879 )(-0.48132,0.21841)(-0.42964,0.30766)(-0.30286,0.34897)
	\else
		\psbezier(-0.47105,0.46555)(-0.48679,0.33682)(-0.42964,0.30766)(-0.30286,0.34897)
		\psbezier(-0.45367,0.0879 )(-0.48132,0.21841)(-0.53343,0.254  )(-0.63713,0.17613)
		\ifx0#3\relax\else
			\psline[linewidth=0.5pt,arrowsize=3pt,arrowinset=0.4]{->}(-0.625,0.01519)(-0.225,0.70801)
		\fi
	\fi
}
\def\diagArrowLeft{<-}
\def\diagArrowRight{->}
\def\diagArrowGen#1#2/#3,#4/(#5,#6)(#7,#8)(#9){%
	\edef\testArrow{#2}%
	\dimen20=#5\psxunit	\dimen21=#6\psyunit
	\dimen22=#7\psxunit	\dimen23=#8\psyunit
	\dimen24=#3\psxunit \dimen25=#4\psyunit
	\ifx\testArrow\diagArrowLeft
		\advance\dimen20 by \dimen24
		\advance\dimen21 by \dimen25
	\else\ifx\testArrow\diagArrowRight
		\advance\dimen22 by -\dimen24
		\advance\dimen23 by -\dimen25
	\fi\fi
	\psline{#2}(\the\dimen20,\the\dimen21)(\the\dimen22,\the\dimen23)%
	\rput[c](#9){$\scriptstyle{#1}$}
\endgroup\ignorespaces}
\def\diagArrow{\begingroup\psset{linewidth=2pt}\diagArrowGen}
\def\diagDblArrow{\begingroup\psset{linewidth=0.5pt,doubleline=true,doublesep=2pt}\diagArrowGen}
\newcommand*\diagConnT[4]{%
	\psset{linewidth=0.5pt,arrowlength=0.8,arrowsize=6pt,dimen=outer}
	\edef\testArrow{#4}%
	\ifx\testArrow\diagArrowLeft
		\psbezier[linewidth=2pt]{<-}(0.4,0)(0.8,0)(0.8,0.6)(0,0.6)
	\else
		\psbezier[linewidth=2pt,arrowlength=0.8,arrowsize=6pt]{-}(0.4,0)(0.8,0)(0.8,0.6)(0,0.6)
	\fi
	\ifx\testArrow\diagArrowRight
		\psbezier[linewidth=2pt]{<-}(-0.4,0)(-0.8,0)(-0.8,0.6)(0,0.6)
	\else
		\psbezier[linewidth=2pt,arrowlength=0.8,arrowsize=6pt]{-}(-0.4,0)(-0.8,0)(-0.8,0.6)(0,0.6)
	\fi
	\rput[c](0,0.8){$\scriptstyle{#3}$}
	\psclip{\pscircle(0,0){0.4}}%
		\diagArrow{#1}{#2}/0,0.02/(0,-0.4)(0,0.4)(0.15,0)
	\endpsclip
}
\newcommand*\diagConnTwb[4]{%
	\psset{linewidth=0.5pt,arrowlength=0.8,arrowsize=6pt,dimen=outer}
	\edef\testArrow{#4}%
	\ifx\testArrow\diagArrowLeft
		\psbezier[linewidth=2pt]{<-}(0.4,0)(0.8,0)(0.8,0.6)(0,0.6)
	\else
		\psbezier[linewidth=2pt,arrowlength=0.8,arrowsize=6pt]{-}(0.4,0)(0.8,0)(0.8,0.6)(0,0.6)
	\fi
	\ifx\testArrow\diagArrowRight
		\psbezier[linewidth=2pt]{<-}(-0.4,0)(-0.8,0)(-0.8,0.6)(0,0.6)
	\else
		\psbezier[linewidth=2pt,arrowlength=0.8,arrowsize=6pt]{-}(-0.4,0)(-0.8,0)(-0.8,0.6)(0,0.6)
	\fi
	\rput[c](0,0.8){$\scriptstyle{#3}$}
	\diagDblArrow{#1}{#2}/0,0.05/(0,-0.4)(0,0.4)(0.15,0)
	\pscircle(0,0){0.4}
}
\newcommand*\diagConnTbw[4]{%
	\psframe[linestyle=none,fillstyle=solid,fillcolor=blackRegion](-1,-0.5)(1,0.9)
	\psset{arrowlength=0.8,arrowsize=6pt,dimen=outer,linewidth=0.5pt,doubleline=true,doublesep=2pt}%
	\edef\testArrow{#4}%
	\ifx\testArrow\diagArrowLeft
		\psbezier{<-}(0.4,0)(0.8,0)(0.8,0.6)(0,0.6)
	\else
		\psbezier{-}(0.4,0)(0.8,0)(0.8,0.6)(0,0.6)
	\fi
	\ifx\testArrow\diagArrowRight
		\psbezier{<-}(-0.4,0)(-0.8,0)(-0.8,0.6)(0,0.6)
	\else
		\psbezier{-}(-0.4,0)(-0.8,0)(-0.8,0.6)(0,0.6)
	\fi
	\rput[c](0,0.8){$\scriptstyle{#3}$}
	\psset{doubleline=false}%
	\begin{psclip}{\pscircle(0,0){0.4}}%
		\diagArrow{#1}{#2}/0,0.05/(0,-0.4)(0,0.4)(0.15,0)
	\end{psclip}
}
\def\diagToPictOne#1(#2,#3)(#4,#5){%
	\expandafter\edef\csname pictLowered#1\endcsname##1{\noexpand\begingroup
		\dimen20=#3\psyunit
		\advance\dimen20 by 0.2\baselineskip
		\noexpand\pspicture[shift=\noexpand\the\dimen20](#2,#3)(#4,#5)
			\expandafter\noexpand\csname diag#1\endcsname{##1}%
		\noexpand\endpspicture
	\noexpand\endgroup}%
	\expandafter\edef\csname pictCentered#1\endcsname##1{%
		\noexpand\centerpict(#2,#3)(#4,#5)
			\expandafter\noexpand\csname diag#1\endcsname{##1}%
		\noexpand\endcenterpict
	}%
	\expandafter\edef\csname pict#1\endcsname{%
		\noexpand\@ifstar
			{\expandafter\noexpand\csname pictCentered#1\endcsname}%
			{\expandafter\noexpand\csname pictLowered#1\endcsname}%
	}
}
\def\diagToPictTwo#1(#2,#3)(#4,#5){%
	\expandafter\edef\csname pictLowered#1\endcsname##1##2{\noexpand\begingroup
		\dimen20=#3\psyunit
		\advance\dimen20 by 0.2\baselineskip
		\noexpand\pspicture[shift=\noexpand\the\dimen20](#2,#3)(#4,#5)
			\expandafter\noexpand\csname diag#1\endcsname{##1}{##2}%
		\noexpand\endpspicture
	\noexpand\endgroup}%
	\expandafter\edef\csname pictCentered#1\endcsname##1##2{%
		\noexpand\centerpict(#2,#3)(#4,#5)
			\expandafter\noexpand\csname diag#1\endcsname{##1}{##2}%
		\noexpand\endcenterpict
	}%
	\expandafter\edef\csname pict#1\endcsname{%
		\noexpand\@ifstar
			{\expandafter\noexpand\csname pictCentered#1\endcsname}%
			{\expandafter\noexpand\csname pictLowered#1\endcsname}%
	}
}
\def\diagToPictThree#1(#2,#3)(#4,#5){%
	\expandafter\edef\csname pictLowered#1\endcsname##1##2##3{\noexpand\begingroup
		\dimen20=#3\psyunit
		\advance\dimen20 by 0.2\baselineskip
		\noexpand\pspicture[shift=\noexpand\the\dimen20](#2,#3)(#4,#5)
			\expandafter\noexpand\csname diag#1\endcsname{##1}{##2}{##3}%
		\noexpand\endpspicture
	\noexpand\endgroup}%
	\expandafter\edef\csname pictCentered#1\endcsname##1##2##3{%
		\noexpand\centerpict(#2,#3)(#4,#5)
			\expandafter\noexpand\csname diag#1\endcsname{##1}{##2}{##3}%
		\noexpand\endcenterpict
	}%
	\expandafter\edef\csname pict#1\endcsname{%
		\noexpand\@ifstar
			{\expandafter\noexpand\csname pictCentered#1\endcsname}%
			{\expandafter\noexpand\csname pictLowered#1\endcsname}%
	}
}
\def\diagToPictFour#1(#2,#3)(#4,#5){%
	\expandafter\edef\csname pictLowered#1\endcsname##1##2##3##4{\noexpand\begingroup
		\dimen20=#3\psyunit
		\advance\dimen20 by 0.2\baselineskip
		\noexpand\pspicture[shift=\noexpand\the\dimen20](#2,#3)(#4,#5)
			\expandafter\noexpand\csname diag#1\endcsname{##1}{##2}{##3}{##4}%
		\noexpand\endpspicture
	\noexpand\endgroup}%
	\expandafter\edef\csname pictCentered#1\endcsname##1##2##3##4{%
		\noexpand\centerpict(#2,#3)(#4,#5)
			\expandafter\noexpand\csname diag#1\endcsname{##1}{##2}{##3}{##4}%
		\noexpand\endcenterpict
	}%
	\expandafter\edef\csname pict#1\endcsname{%
		\noexpand\@ifstar
			{\expandafter\noexpand\csname pictCentered#1\endcsname}%
			{\expandafter\noexpand\csname pictLowered#1\endcsname}%
	}
}
\def\COBxsize{0.4}
\def\COBysize{1.2}
\definecolor{darkred}{rgb}{0.5,0,0}%
\def\drawSaddle{%
	\begin{centerpict}(-1pt,-2pt)(21pt,18pt)%
		\psset{linewidth=0.5pt}%
		\psline(17pt,16pt)(17pt,5pt)
		\psbezier( 0pt, 5pt)( 9pt, 3pt)( 9pt, 3pt)(17pt, 5pt)
		\psline[border=1pt]( 3pt,11pt)( 3pt,0pt)
		\psbezier[border=1pt](20pt,11pt)(12pt,12pt)(11pt,15pt)(17pt,16pt)
		\psbezier( 3pt, 0pt)(11pt, 2pt)(11pt, 2pt)(20pt, 0pt)
		\psbezier( 3pt,11pt)( 9pt,12pt)( 8pt,15pt)( 0pt,16pt)
		\psline(17pt,16pt)(17pt,14pt)
		\psline( 0pt,16pt)( 0pt,5pt)
		\psline(20pt,11pt)(20pt,0pt)
		\psbezier(6.67pt,14pt)(7pt,5pt)(13pt,5pt)(13.33pt,14pt)
	\end{centerpict}%
}
\begin{document}

\title{On a~triply graded Khovanov homology}
\author{Krzysztof K.\ Putyra}

\date\today

\keywords{Khovanov homology, odd Khovanov homology, disjoint union, connected sum, module structure}

\begin{abstract}
	Cobordisms are naturally bigraded and we show that this grading extends to Khovanov homology, making it a~triply graded theory. Although the~new grading does not make the~homology a~stronger invariant, it can be used to show that odd Khovanov homology is multiplicative with respect to disjoint unions and connected sums of links; same results hold for the~generalized Khovanov homology defined by the~author in his previous work. We also examine the~module structure on both odd and even Khovanov homology, in particular computing the~effect of sliding a~basepoint through a~crossing on the~integral homology.
\end{abstract}

\maketitle

\section{Introduction}\label{sec:intro}
The~category $\cat{2Cob}$ of $2$-dimensional cobordisms is usually considered as $\Z$-graded, with the~degree function given by the~Euler characteristic of a~cobordism. It was shown in \cites{KhUnified,ChCob} that this degree can be split into two numbers, one counting merges and births, whereas the~other splits and deaths:
\begin{align*}
	\deg\left(\textcobordism[2](M)\right) &= (-1,0) & \deg\left(\textcobordism[0](sB)\right) &= (1,0) \\
	\deg\left(\textcobordism[1](S)\right) &= (0,-1) & \deg\left(\textcobordism[1](sD)\right) &= (0,1)
\end{align*}
Indeed, the~only relations that affect the~set of critical points either create or remove a~pair birth--merge or split--death, which does not change the~two numbers. Because of that one can try to enhance the~construction of Khovanov homology $\Kh^{i,j}(L)$ \cite{KhHom} to a~triply graded homology $\widetilde\Kh(L)^{i,p,q}$ with
\begin{equation*}
	\Kh^{i,j}(L) = \bigoplus_{p+q=j}\widetilde\Kh^{i,p,q}(L).
\end{equation*}
Indeed, the~chain complex computing $\Kh(L)$ is constructed from the~cube of resolutions of a~diagram $D$ of $L$, vertices and edges of which are labeled with collections of circles and cobordisms between them respectively. Shifting degrees of the~vertices appropriately results in a~cube of graded maps, from which one can obtain a~chain complex of bigraded groups.

Unfortunately, one does not obtain a~stronger invariant in this way, as after a~normalization $\widetilde\Kh(L)^{i,p,q}=0$ unless $p=q$. This is the~reason why the~author dropped this idea in his earlier works \cites{KhUnified,ChCob} on unification of the~Khovanov homology with its odd variant \cite{OddKh}.

The~additional grading, however, appeared to be a~key ingredient to understand the~odd and generalized Khovanov homologies of composite links. It is well known that the~original Khovanov homology, which we call in this paper the~\emph{even Khovanov homology}, is multiplicative with both disjoint unions and connected sums of links \cites{KhHom,KhPatterns}, i.e.\
\begin{equation*}
	\EKh(L\sqcup L') \cong \EKh(L) \underset{\mathclap{\Z}}{\hat\otimes} \EKh(L')
	\qquad\text{and}\qquad
	\EKh(L\connsum L') \cong \EKh(L) \underset{\mathclap{A}}{\hat\otimes} \EKh(L'),
\end{equation*}
where $\hat\otimes$ denotes a~derived tensor product and $A$ is the~Khovanov's algebra associated to a~circle. To construct such isomorphisms for the~odd and generalized Khovanov homology one has to understand the~role of sign assignments better---the~naive tensor product of complexes for links $L$ and $L'$ does not give a~priori a~chain complex for $L\sqcup L'$. The~new grading is the~additional information that helps to deal with this situation. Namely, it tells us that the~naive isomorphism $\OKh(L\sqcup L') \to \OKh(L)\hat\otimes\OKh(L')$, i.e.\ the~one given by the~identities on chain groups, is not the~correct one (it is even not a~chain map). Instead, it has to be twisted by certain signs, which are controlled by the~new grading.

The~above is enough to derive a~formula for disjoint unions of links, but to compute homology of a~connected sum a~module structure on the~homology must be specified. It is defined naturally at the~level of link diagrams with basepoints: the~action of the~algebra $A$ is induced by merging a~circle with the~diagram at the~basepoint. Although it works nicely in the~even setting, there are several issues in the~case of generalized Khovanov homology. For instance, the~algebra associated to a~circle is not associative. This can happen, as the~product is not a~graded map---once the~degree is shifted accordingly one obtains an~associative algebra $A'$, tensor products over which are well-defined.

Sliding a~basepoint on a~link does not change the~module structure on the~even Khovanov homology up to an~isomorphism, and we proof the~same for the~generalized variant. It is a~bit surprising that the~module structure behaves nicer for the~odd Khovanov homology---sliding a~basepoint though a~crossing does not change the~module structure at all. On the~other hand, one cannot move a~dot from one link component to another. Therefore, we follow the~idea of \cite{KhDetectsUnlinks} and we construct $c$ actions of $A'$ on the~homology of a~$c$-component link. When computing homology of $L\connsum L'$ one should choose the~actions determined by the~link components that are joined together.

Our construction of a~triply graded Khovanov homology is not the~only one. There is another construction expected to result in a~triply graded chain complex coming from a~filtration on the~HOMFLYPT homology \cite{HOMFLYPTfiltr}, but so far it is not proven to be invariant under all Reidemeister moves. We do not know how it compares with our grading.

\subsection*{Outline}
We begin with a~brief description of the~category of chronological cobordisms $\kChCob$ and graded tensor categories, introduced in \cite{ChCob}, which provide a~framework for the~construction of the~generalized Khovanov homology $\Kh(L)$. The~construction of the~generalized Khovanov complex $\KhCom(D)$ is presented in Section~\ref{sec:khov-def} following \cite{ChCob}. The~only difference is in regarding it as an~object graded by $\Z\times\Z$, instead of graded by integers as in the~original construction. The~section ends with a~proof that all the~homotopy equivalences used in \cite{ChCob} to prove invariance of $\KhCom(D)$ under Reidemeister moves preserve the~new grading. Then in Section~\ref{sec:homology} we show that the~new grading does not lead to new invariants.

The~main part of the~paper begins in Section~\ref{sec:composite}, in which we derive the~formulas for $\KhCom(L\sqcup L')$ and $\KhCom(L\connsum L')$, first at the~level of complexes in $\kChCob$. The~application of chronological TQFT functors and the~formulas for homology is delayed till the~end of Section~\ref{sec:module}, after we construct the~module structure on homology. We compute here the~result of sliding a~basepoint through a~crossing---this was done in \cite{KhDetectsUnlinks} over $\Z_2$, but our map is defined over integers.

\section{Chronological TQFTs}\label{sec:chron-tqft}
In this paper $\scalars$ will always stand for the~ring $\scalarsLong$.

\begin{definition}[cf. \cite{ChCob}]
	Let $W$ be a~cobordism with a~Riemann metric. A~\emph{chronology} on $W$ consists of a~Morse function $h\colon W\to I$ that separates critical points, and a~choice of an~orientation of $E^-(p)$, the~space of unstable directions in the~gradient flow induced by $h$, at each critical point $p$. We require $h^{-1}(0)$ and $h^{-1}(1)$ to be the~input and output of $W$ respectively.
\end{definition}

A~standard argument from Morse theory implies that every 2-dimensional chronological cobordism can be built from six surfaces:
\begin{equation}\label{diag:mor-gens}\psset{unit=1cm}\begin{centerpict}(12.4,2.4)
	\COBmergeFrLeft(0,1.1)\COBsplitFrBack(2.6,1.1)
	\COBbirth(5.2,1.1)\COBposDeath(7.0,1.1)\COBnegDeath(9.2,1.1)
	\COBpermutation(11.2,1.1)
	\rput[B](0.6,1.5ex){\textnormal{a~merge}}
	\rput[B](2.8,1.5ex){\textnormal{a~split}}
	\rput[B](5.0,1.5ex){\textnormal{a~birth}}
	\rput[B](7.2,1.5ex){\parbox{10ex}{\centering\textnormal{a~positive death}}}
	\rput[B](9.4,1.5ex){\parbox{10ex}{\centering\textnormal{a~negative death}}}
	\rput[B](11.8,1.5ex){\textnormal{a~twist}}
\end{centerpict}\end{equation}
The~little arrows visualize orientations of critical points. One merge and one split is sufficient, as the~little arrow can be reversed by composing the~cobordism with a~twist.

Chronological cobordisms admit two disjoint unions: the~`left-then-right' $W\ldsum W'$ and the~`right-then-left' one $W\rdsum W'$. Both are diffeomorphic to the~standard disjoint union $W\sqcup W'$, but to avoid a~situation with two critical points at the~same level one has to pull all critical points of $W$ below $\frac{1}{2}$ and those of $W'$ over $\frac{1}{2}$ (for $\ldsum$) or the~other way (for $\rdsum$):
\begin{displaymath}
	\textcobordism[2](M-L)(sI)\textcobordism[1](sI)(S-B)
		\from<3em>^{\ldsum}
	\left(\textcobordism[2](M-L), \textcobordism[1](S-B)\right)
		\to<3em>^{\rdsum}
	\textcobordism[2](sI)(M-L)\hskip 0.3\psxunit\textcobordism[1](S-B)(sI)
\end{displaymath}
Likewise, there are two versions of a~connected sum $W\lcsum W'$ and $W\rcsum W'$:
\begin{displaymath}
	\textcobordism[2](lM-L)(rS-F)
		\from<3em>^{\lcsum}
	\left(\textcobordism[2](lM-L), \textcobordism[1](rS-F)\right)
		\to<3em>^{\rcsum}
	\textcobordism[2](rS-F,2)(lM-L)
\end{displaymath}
However, this operation requires more choices---a~vertical line at each cobordism, along which they are glued together. We shall make this choice implicitly.

\begin{definition}
	Define the~\emph{degree} $\chdeg W\in \Z\times\Z$ of a~chronological cobordism $W$ by setting
	\begin{equation}
		\chdeg W := (\#\text{births}-\#\text{merges}, \#\text{deaths}-\#\text{splits}).
	\end{equation}
\end{definition}

\noindent
The~chronological degree is clearly additive with respect to composition of chronological cobordisms as well as both disjoint unions and connected sums.

\begin{lemma}\label{lem:chdeg-vs-bdry}
	Let $W$ be a~chronological cobordism of degree $\chdeg W = (a,b)$ with $n$ inputs and $m$ outputs. Then $a+n = b+m$.
\end{lemma}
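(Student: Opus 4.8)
The plan is to prove the identity by tracking the number of circles in the cross‑sections of $W$. Fix a chronology $(h,\dots)$ on $W$ and, for a regular value $t\in(0,1)$, let $c(t)$ be the number of connected components of the closed $1$‑manifold $h^{-1}(t)$ — that is, the number of circles in that cross‑section. Since $h$ separates critical points, $c$ is constant on each connected component of the set of regular values, so it is a step function whose only jumps occur at the critical values; by the defining condition on a chronology, $h^{-1}(0)$ consists of $n$ circles and $h^{-1}(1)$ of $m$ circles, so $c(0^+)=n$ and $c(1^-)=m$. Equivalently — and this is the formulation I expect to be cleanest, given that it was just recalled — one may write $W$ as a composition of elementary cobordisms, each a disjoint union of cylinders on the unaffected circles with exactly one of the six generators, and let $c$ record the circle count between consecutive generators.

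The second step is to read off the effect of passing a single critical point (equivalently, post‑composing with one generator) on $c$, from the classification into the six local models: a birth adds a circle, so $c$ increases by $1$; a positive or a negative death caps off a circle, so $c$ decreases by $1$; a merge replaces two circles by one, so $c$ decreases by $1$; a split replaces one circle by two, so $c$ increases by $1$; and a twist is a permutation cobordism, so $c$ is unchanged. Summing these contributions over all critical points of $W$ gives
\begin{equation*}
	m-n \;=\; \#\text{births}-\#\text{deaths}+\#\text{splits}-\#\text{merges}
	\;=\; \bigl(\#\text{births}-\#\text{merges}\bigr)-\bigl(\#\text{deaths}-\#\text{splits}\bigr)
	\;=\; a-b,
\end{equation*}
using $\chdeg W=(a,b)$. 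Rearranging yields $a+n=b+m$.

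The argument is essentially bookkeeping, and I do not anticipate a serious obstacle; the only points needing care are to confirm that every critical point contributes exactly one of the six local models (this is the Morse‑theoretic decomposition already invoked in the excerpt) and that both flavors of death lower the circle count by one. If one prefers to sidestep the level‑set language altogether, the same conclusion follows by induction on the number of critical points: peel off the topmost generator $G$, apply the inductive hypothesis to the remaining cobordism, and use additivity of $\chdeg$ together with the single‑generator circle‑count shifts above, the base case being a cylinder, for which $a=b=0$ and $n=m$.
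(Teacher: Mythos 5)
Your proof is correct and is essentially the paper's own argument: the paper simply says the lemma is ``straightforward, by checking for generating cobordisms,'' which is exactly your verification of the circle-count shift for each of the six generators together with additivity of $\chdeg$ under composition. Your level-set bookkeeping and the induction on critical points are just explicit spellings-out of that same check.
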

\begin{proof}
	Straightforward, by checking for generating cobordisms \eqref{diag:mor-gens}.
\end{proof}

Let $\kChCob$ be a~$\scalars$-linear category with finite disjoint unions of circles as objects, and formal $\scalars$-linear combinations of 2-dimensional chronological cobordisms as morphisms, modulo the~following \emph{chronological relations}:
\begin{gather}
	%
		\label{rel:reverse-orientation}
		\textcobordism[2](M-R) = \permMM\textcobordism[2](M-L)
			\hskip 1cm
		\textcobordism[1](S-F) = \permSS\textcobordism[1](S-B)
			\hskip 1cm
		\textcobordism[1](sD-) = \permSS\textcobordism[1](sD+)
	\\
	%
		\label{rel:annihilation-creation}
		\textcobordism[1](sI)(I) = \textcobordism*[1](slB)(M-L)
			\hskip 1.5cm
		\textcobordism[1](I)(sI) = \textcobordism*[1](S-B)(srD+,2)
			\hskip 1.5cm
		\textcobordism[1](I)(sI) = \textcobordism*[1](S-B)(slD-,1)
	\\
	%
		\label{rel:connected-sum}
		\begin{centerpict}(-0.1,-0.1)(2.9,2.5)
			\COBcylinder(0.1,0)(0.1,2.4)\COBcylinder(1.2,0)(1.2,2.4)\COBcylinder(2.3,0)(2.3,2.4)
			\rput[c](0.87,0.1){$\scriptstyle\cdots$}\rput[c](0.87,2.25){$\scriptstyle\cdots$}
			\rput[c](1.97,0.1){$\scriptstyle\cdots$}\rput[c](1.97,2.25){$\scriptstyle\cdots$}
			\psframe[framearc=0.5,fillstyle=solid](-0.05,0.3)(1.40,1.2)
			\psframe[framearc=0.5,fillstyle=solid]( 1.40,1.2)(2.85,2.1)
			\rput[c](0.725,0.75){$W'$}
			\rput[c](2.125,1.65){$W\phantom'$}
		\end{centerpict}
			= \lambda(\chdeg W,\chdeg W')
		\begin{centerpict}(-0.1,-0.1)(2.9,2.5)
			\COBcylinder(0.1,0)(0.1,2.4)\COBcylinder(1.2,0)(1.2,2.4)\COBcylinder(2.3,0)(2.3,2.4)
			\rput[c](0.87,0.1){$\scriptstyle\cdots$}\rput[c](0.87,2.25){$\scriptstyle\cdots$}
			\rput[c](1.97,0.1){$\scriptstyle\cdots$}\rput[c](1.97,2.25){$\scriptstyle\cdots$}
			\psframe[framearc=0.5,fillstyle=solid](-0.05,1.2)(1.40,2.1)
			\psframe[framearc=0.5,fillstyle=solid]( 1.40,0.3)(2.85,1.2)
			\rput[c](0.725,1.65){$W'$}
			\rput[c](2.125,0.75){$W\phantom'$}
		\end{centerpict}
	\\
	%
		\label{rel:disjoint-union}
		\begin{centerpict}(-0.1,-0.1)(3.5,2.5)
			\COBcylinder(0.1,0)(0.1,2.4)\COBcylinder(1.0,0)(1.0,2.4)
			\COBcylinder(2.0,0)(2.0,2.4)\COBcylinder(2.9,0)(2.9,2.4)
			\rput[c](0.77,0.1){$\scriptstyle\cdots$}\rput[c](0.77,2.25){$\scriptstyle\cdots$}
			\rput[c](2.67,0.1){$\scriptstyle\cdots$}\rput[c](2.67,2.25){$\scriptstyle\cdots$}
			\psframe[framearc=0.5,fillstyle=solid](-0.05,0.3)(1.55,1.2)
			\psframe[framearc=0.5,fillstyle=solid]( 1.85,1.2)(3.45,2.1)
			\rput[c](0.8,0.75){$W'$}
			\rput[c](2.7,1.65){$W\phantom'$}
		\end{centerpict}
			= \lambda(\deg W,\deg W')
		\begin{centerpict}(-0.1,-0.1)(3.5,2.5)
			\COBcylinder(0.1,0)(0.1,2.4)\COBcylinder(1.0,0)(1.0,2.4)
			\COBcylinder(2.0,0)(2.0,2.4)\COBcylinder(2.9,0)(2.9,2.4)
			\rput[c](0.77,0.1){$\scriptstyle\cdots$}\rput[c](0.77,2.25){$\scriptstyle\cdots$}
			\rput[c](2.67,0.1){$\scriptstyle\cdots$}\rput[c](2.67,2.25){$\scriptstyle\cdots$}
			\psframe[framearc=0.5,fillstyle=solid](-0.05,1.2)(1.55,2.1)
			\psframe[framearc=0.5,fillstyle=solid]( 1.85,0.3)(3.45,1.2)
			\rput[c](0.8,1.65){$W'$}
			\rput[c](2.7,0.75){$W\phantom'$}
		\end{centerpict}
\end{gather}
where $W$ and $W'$ stand for any cobordisms, and $\lambda(a,b,a',b') := \permMM^{aa'}\permSS^{bb'}\permMS^{ab'-a'b}$. Notice that the~following associativity and Frobenius-type relations are special cases of \eqref{rel:connected-sum}:
\begin{gather}
		\label{rel:associativity}
		\textcobordism*[3](M-L)(M-L) = \permMM\textcobordism*[3](M-L,2)(M-L)
			\hskip 1.5cm
		\textcobordism*[1](S-B)(S-B) = \permSS\textcobordism*[1](S-B)(S-B,2)
	\\
	%
		\label{rel:frobenius}
		\textcobordism*[2](S-B)(M-L,2) = \permMS\textcobordism*[2](M-L)(S-B)
															 = \textcobordism*[2](S-B,2)(M-L)
\end{gather}
We proved in \cite{ChCob} the~following non-degeneracy result for $\kChCob$.

\begin{theorem}\label{thm:kChCob-nondeg}
	Suppose $kW=0$ for a~chronological cobordism $W$ and a~nonzero $k\in\scalars$. Then $W$ has either positive genus or at least two closed components, and $k$ is divisible by $(\permMM\permSS-1)$. In particular, cobordisms cannot be annihilated by monomials.
\end{theorem}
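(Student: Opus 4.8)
The plan is to compute, for an arbitrary morphism $W$ of $\kChCob$, its annihilator ideal $\operatorname{Ann}(W)=\{k\in\scalars : kW=0\}$. Working in the presentation of $\Hom_{\kChCob}$ as a free $\scalars$-module on isotopy classes of chronological cobordisms modulo the relations \eqref{rel:reverse-orientation}--\eqref{rel:disjoint-union}, one observes that these relations come in two flavours: those of the shape ``one generator equals a \emph{unit} times another generator'' (every relation in \eqref{rel:reverse-orientation}, \eqref{rel:annihilation-creation}, and the generic instances of \eqref{rel:connected-sum}, \eqref{rel:disjoint-union}), which let one eliminate generators down to a set of normal forms, and the residual self-relations ``$c=\lambda\cdot c$'' they force on the surviving normal forms, where $\lambda\in\invScalars$ is the product of units picked up along a loop returning $c$ to itself. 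Consequently $\Hom_{\kChCob}$ is a direct sum of cyclic modules $\scalars/(\lambda_c-1)$, and since the $W$ of the statement is a single cobordism, $W$ equals a unit times one normal form $c$, so $\operatorname{Ann}(W)=(\lambda_c-1)$. It therefore remains to identify $\lambda_c$.

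The first half is to produce the self-relation $W=\permMM\permSS\cdot W$ when $W$ has positive genus or at least two closed components, which shows $(\permMM\permSS-1)\subseteq\operatorname{Ann}(W)$. If $W$ contains a handle, realize that handle as a split immediately followed by a merge; the ``flip'' of the handle interchanges these two critical points and reverses their orientations, and it is carried out using only \eqref{rel:reverse-orientation} together with the associativity and Frobenius relations \eqref{rel:associativity}, \eqref{rel:frobenius} (equivalently the relevant instance of \eqref{rel:connected-sum}). The accumulated scalar is $\lambda$ evaluated on the chronological degree $(-1,-1)$ of the local split--merge piece, namely $\lambda\big((-1,-1),(-1,-1)\big)=\permMM^{1}\permSS^{1}\permMS^{0}=\permMM\permSS$. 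If instead $W$ has two closed components $C,C'$ of the same diffeomorphism type, reordering them along the Morse direction by \eqref{rel:disjoint-union} yields $W=\lambda(\chdeg C,\chdeg C')\cdot W$, because the two orderings name the very same morphism; by Lemma~\ref{lem:chdeg-vs-bdry} a closed component has $\chdeg=(a,a)$, so $\lambda(\chdeg C,\chdeg C')=\permMM^{a^2}\permSS^{a^2}$, which is $\permMM\permSS$ when $a$ is odd and $1$ when $a$ is even---but in the latter case $C$ has positive genus and one falls back on the handle argument. In every case $(\permMM\permSS-1)W=0$; note also that $(\permMM\permSS-1)(\permMM\permSS+1)=0$ in $\scalars$, so this torsion is genuine.

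The second, harder half is that no $k$ outside $(\permMM\permSS-1)$ annihilates a nonzero $W$, i.e.\ that $\lambda_c=1$ for every normal form $c$ with genus $0$ and at most one closed component, and $\lambda_c-1$ generates exactly $(\permMM\permSS-1)$ otherwise. For the former one must show such a $c$ admits no nontrivial monodromy loop at all; this is a Morse-theoretic classification statement---a chronological cobordism without handles and without repeated closed pieces is rigid modulo the relations---and its verification, together with the $\scalars$-linear independence of the normal forms (which I would establish by applying the chronological TQFT functors $\Fev,\Fodd,\Fpi$ of \cite{ChCob}, or their universal chronological-Frobenius-algebra refinement, whose images separate the normal forms), is the main obstacle. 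For the latter, one checks that over $\scalars/(\permMM\permSS-1)\cong\Z[\permMM,\permMS^{\pm1}]/(\permMM^2-1)$ the remaining self-relations all become trivial, so no proper divisor of $\permMM\permSS-1$ can lie in $\operatorname{Ann}(W)$; hence $\operatorname{Ann}(W)$ is either $0$ or precisely $(\permMM\permSS-1)$.

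Assembling: if $kW=0$ with $k\neq0$ then $k\in\operatorname{Ann}(W)\neq0$, so $\operatorname{Ann}(W)=(\permMM\permSS-1)$, which by the classification forces $W$ to have positive genus or at least two closed components, and forces $(\permMM\permSS-1)\mid k$. Finally, a monomial $\pm\permMM^{a}\permSS^{b}\permMS^{c}$ is a unit of $\scalars$ (because $\permMM^{2}=\permSS^{2}=1$ and $\permMS$ is invertible) whereas $\permMM\permSS-1$ is a non-unit (it vanishes in $\Z[\permMM,\permMS^{\pm1}]/(\permMM^{2}-1)$), so no monomial is divisible by $\permMM\permSS-1$; therefore no monomial annihilates a nonzero cobordism.
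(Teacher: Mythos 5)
Your proposal has a genuine gap, and it sits exactly where the theorem lives. The reduction you set up — present $\Hom_{\kChCob}$ by generators (chronological cobordisms) and relations of the form ``generator $=$ unit $\times$ generator,'' so that each diffeomorphism class contributes a cyclic module $\scalars/(\mu-1:\mu\in\Lambda_c)$ with $\Lambda_c$ the monodromy group of loops of relations — is a reasonable framework, but the two assertions you then need, namely (i) $\Lambda_c$ is trivial when $c$ has genus $0$ and at most one closed component, and (ii) $\Lambda_c=\{1,\permMM\permSS\}$ otherwise, \emph{are} the theorem; you explicitly defer their verification (``the main obstacle''). Worse, the tool you propose for the deferred step cannot do the job: the functors $\Fev$, $\Fodd$, $\Fpi$ (and any TQFT built on the rank-two algebra $A$) annihilate every closed component of genus $\neq 1$ (and in the odd case genus $1$ as well), so they can neither separate normal forms nor certify that a cobordism containing, say, a closed genus-$3$ piece has annihilator contained in $(\permMM\permSS-1)$. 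Note also that this paper does not prove the statement at all — it quotes it from \cite{ChCob}, where the argument proceeds by classifying the elementary changes of chronology on a fixed cobordism and the relations among them; that classification is precisely the missing ingredient your sketch would have to supply. Finally, your inference that $\mathrm{Ann}(W)$ is ``either $0$ or precisely $(\permMM\permSS-1)$'' because the self-relations trivialize over $\scalars/(\permMM\permSS-1)$ only makes sense once the direct-sum/cyclic-module structure is established, so it cannot be used to shortcut (i) and (ii).

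Two smaller points. First, the half you do argue in detail (that positive genus or two closed components forces $(\permMM\permSS-1)W=0$) is the converse of what the statement asserts and is not needed for it; it is nevertheless essentially right, but your mechanism for the handle is misdescribed: a split and the merge glued on top of it cannot be interchanged by \eqref{rel:connected-sum}/\eqref{rel:disjoint-union} (one feeds the other), and the scalar $\permMM\permSS$ does not arise as $\lambda\big((-1,-1),(-1,-1)\big)$; it comes from rotating the punctured torus so that both critical-point orientations reverse, and then applying \eqref{rel:reverse-orientation} twice. Second, in the two-closed-component case your swap argument needs the components to be identified after reduction to a common normal form (e.g.\ all genus-$0$ closed pieces reduced to the standard sphere via \eqref{rel:annihilation-creation} and \eqref{rel:reverse-orientation}); as written, ``same diffeomorphism type'' quietly presupposes part of the normal-form theory whose absence is the main gap.
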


Consider now the~category $\Mod\scalars$ of $\scalars$-modules graded by the~group $\Z\times\Z$. We redefine the~tensor product by setting for homogeneous homomorphisms $f$ and $g$
\begin{equation}\label{eq:graded-tensor-morphisms}
		(f\otimes g)(m\otimes n) := \lambda(\deg g, \deg m) f(m)\otimes g(n),
\end{equation}
where $\lambda(a,b,a',b')=\permMM^{aa'}\permSS^{bb'}\permMS^{ab'-a'b}$ is defined as for $\kChCob$. One checks directly that
\begin{equation}\label{eq:graded-tensor-functoriality}
	(f'\otimes g')\circ(f\otimes g) = \lambda(\deg g',\deg f)(f'\circ f)\otimes(g'\circ g).
\end{equation}
Hence, $\Mod\scalars$ is a~\emph{graded tensor category} in the~sense of \cite{ChCob}. There is a~symmetry $\tau_{M,N}\colon M\otimes N\to N\otimes M$ given by the~formula $\tau_{M,N}(m\otimes n) = \lambda(\deg m,\deg n)\,n\otimes m$ for homogeneous elements $m\in M$ and $n\in N$.

A~linear category is said to be \emph{graded} by an~abelian group $G$ if its morphism spaces are $G$-graded modules, and the~grading is preserved by composition of morphisms. We also require an~additive family of \emph{degree shift functors} $A\mapsto A\{g\}$ parametrized by $g\in G$, i.e.\ $A\{g\}\{h\} = A\{g+h\}$, such that the~modules $\Mor(A\{m\},B\{n\})$ and $\Mor(A,B)$ are naturally isomorphic up to grading: if a~morphism $f\in\Mor(A,B)$ has degree $d$, then $\deg f = d + n - m$ when regarded as an~element of $\Mor(A\{m\},B\{n\})$.

The~category $\Mod\scalars$ is clearly graded by $\Z\times\Z$, but $\kChCob$ is not---it lacks the~degree shift functors. We introduce them formally by replacing the~objects of $\kChCob$ with symbols $\Sigma\{a,b\}$, where $\Sigma$ is a~finite disjoint union of circles and $a,b\in\Z$. The~degree of a~chronological cobordism is extended over the~new morphisms in a~natural way:
\begin{equation}\label{eq:deg-in-ChCob0}
	\chdeg\Big(\Sigma\{a,b\}\to^{\ W\ }\Sigma'\{a',b'\}\Big) :=
	\chdeg\Big(\Sigma\to^{\ W\ }\Sigma'\Big) + (a'-a,b'-b).
\end{equation}
For example, the~following morphism has degree $(0,0)$:
\begin{equation}
	\rule[0pt]{0pt}{10mm}
	\fntCircle\fntCircle \to^{\quad\psset{unit=6mm}\textcobordism[2](M)\quad} \fntCircle\{1,0\}.
\end{equation}
We reserve the~symbol $\kChCob_0$ for the~subcategory of $\kChCob$ spanned by morphisms of degree $(0,0)$. It is an~abelian category.

\begin{definition}
	A~\emph{chronological TQFT} is a~graded functor $\F\colon\kChCob \to \Mod\scalars$ that maps $\rdsum$ into the~graded tensor product $\otimes$ and the~twist {\psset{unit=4mm}\textcobordism[2](P)} into the~symmetry $\tau$.
\end{definition}

\begin{example}\label{ex:Kh-tqft}
	We defined in \cite{ChCob} a~chronological TQFT $\FA\colon\scalars\ChCob \to \Mod\scalars$, which maps a~circle to a~$\scalars$-module $A$ freely degenerated by $v_+$ in degree $(1,0)$ and $v_-$ in degree $(0,-1)$. On generating cobordisms $\FA$ is defined as follows:
	\begin{align}
	\label{eq:F-merge}
		\FA\left(\textcobordism[2](M-L)\right)&\colon A\otimes A\to A,\phantom{R}\quad
			\begin{cases}
					v_+\otimes v_+ \mapsto v_+, &\qquad v_+\otimes v_- \mapsto v_-,\\
					v_-\otimes v_- \mapsto 0,   &\qquad v_-\otimes v_+ \mapsto \permMM\permMS v_-,
				\end{cases} \\
	\label{eq:F-split}
		\FA\left(\textcobordism[1](S-B)\right)&\colon A\to A\otimes A,\phantom{R}\quad
			\begin{cases}
				v_+\mapsto v_-\otimes v_+ + \permSS\permMS v_+\otimes v_-,\\
				v_-\mapsto v_-\otimes v_-,
			\end{cases} \\
	\label{eq:F-birth}
		\FA\left(\textcobordism[0](sB)\right)&\colon\scalars\to A,\phantom{A\otimes A}\quad
			\begin{cases}
				1\mapsto v_+,
			\end{cases}\\
	\label{eq:F-death}
		\FA\left(\textcobordism[1](sD-)\right)&\colon A\to\scalars,\phantom{A\otimes A}\quad
			\begin{cases}
				v_+\mapsto 0,\\
				v_-\mapsto 1.
			\end{cases}
	\end{align}
	It is easy to see that $\FA$ is a~graded functor.
\end{example}

\begin{remark}
	Given a~$\scalars$-algebra $R$ we define likewise categories $R\ChCob$ and $\Mod R$ together with a~chronological TQFT $\F_{\!R}\colon R\ChCob\to\Mod R$. In particular, if we consider $\Z$ as a~trivial module, i.e.\ $\permMM$, $\permSS$, and $\permMS$ act as the~identity, $\Z\ChCob$ is the~linear extension of ordinary cobordisms---the~relations \eqref{rel:reverse-orientation}--\eqref{rel:disjoint-union} become equalities---and $\F_\Z$ is the~Khovanov's functor \cite{KhHom}.
\end{remark}

Each of the~parameters $\permMM$, $\permSS$ and $\permMS$ is invertible, so that there are eight $\scalars$-algebra structures on the~ring $\Z$. We shall distinguish two of them:
\begin{itemize}
	\item $\Zev$, on which all $\permMM$, $\permSS$, and $\permMS$ act trivially, and
	\item $\Zodd$, on which $\permMM$ and $\permMS$ act trivially, but $\permSS$ acts as $-1$.
\end{itemize}
We call them the~\emph{even} and \emph{odd integers} respectively. Both are quotients of $\,\Zpi:=\ZpiLong*$, on which $\permMM$ and $\permMS$ act trivially, but $\permSS\cdot x := \pi x$.

\section{The~generalized Khovanov complex}\label{sec:khov-def}
We shall now briefly describe the~construction of the~generalized Khovanov complex. We encourage the~reader to refer to Fig.~\ref{fig:trefoil-cube} frequently while reading this section; it illustrates the~construction for the~right-handed trefoil.

\subsection{The~cube of resolutions}
Fix a~link diagram $D$ and enumerate its crossings. Given a~sequence $\xi=(\xi_1,\dots,\xi_n)$, where $\xi_i\in\{0,1\}$ and $n$ is the~number of crossings in $D$, let $D_\xi$ be the~collection of circles obtained by resolving each crossing according to the~following rule:
\begin{displaymath}
		\psset{unit=5mm}
		\begin{centerpict}(-1,-1.2)(1,1.2)
			\psbezier(-1,-1)(0,-0.1)(0,-0.1)(1,-1)
			\psbezier(-1, 1)(0, 0.1)(0, 0.1)(1, 1)
		\end{centerpict}
			\quad\xleftarrow{\ \xi_i=0\ }\quad
		\begin{centerpict}(-1,-1.2)(1,1.2)
			\psline(-1,-1)(1,1)
			\psline[border=5\pslinewidth](-1,1)(1,-1)
			\rput[b](0,1.3ex){$\scriptstyle i$}
		\end{centerpict}
			\quad\xrightarrow{\ \xi_i=1\ }\quad
		\begin{centerpict}(-1,-1.2)(1,1.2)
			\psbezier(-1,-1)(-0.1,0)(-0.1,0)(-1,1)
			\psbezier( 1,-1)( 0.1,0)( 0.1,0)( 1,1)
		\end{centerpict}
\end{displaymath}
The~diagrams $D_\xi$ decorate vertices of an~$n$-dimensional cube $\KhCube{D}$, called the~\emph{cube of resolutions} of $D$. Let $\|\xi\|:=\xi_1+\ldots+\xi_n$ be the~\emph{weight} of the~vertex $\xi$. Consider an~edge $\zeta\colon\xi\to\xi'$ oriented towards the~vertex with higher weight. The~diagrams $D_\xi$ and $D_{\xi'}$ differ only in a~smoothing of a~single crossing, and we decorate the~edge $\zeta$ with the~simplest possible cobordisms between the~two pictures $D_\zeta\subset\R^2\times I$, which is a~vertical surface except a~small neighborhood of the~crossing whose smoothing is changed---here we insert a~saddle \drawSaddle.\footnote{
	In Fig.~\ref{fig:trefoil-cube} we use the~surgery description for cobordisms: the~input circles together with an~arc, a~surgery along which results in the~output circles. The~arc is oriented, which determines an~orientation of the~saddle. A~3D picture of one cobordism is provided in the~left bottom corner.
} Decorate crossings of the~link diagram $D$ with small arrows, which connects the~two arcs in type 0 resolution---they determine uniquely orientations of saddle points of the~cobordisms $D_\zeta$, so that $\KhCube{D}$ can be regarded as a~diagram in the~category $\kChCob$.

\begin{figure}
	\begin{center}\input{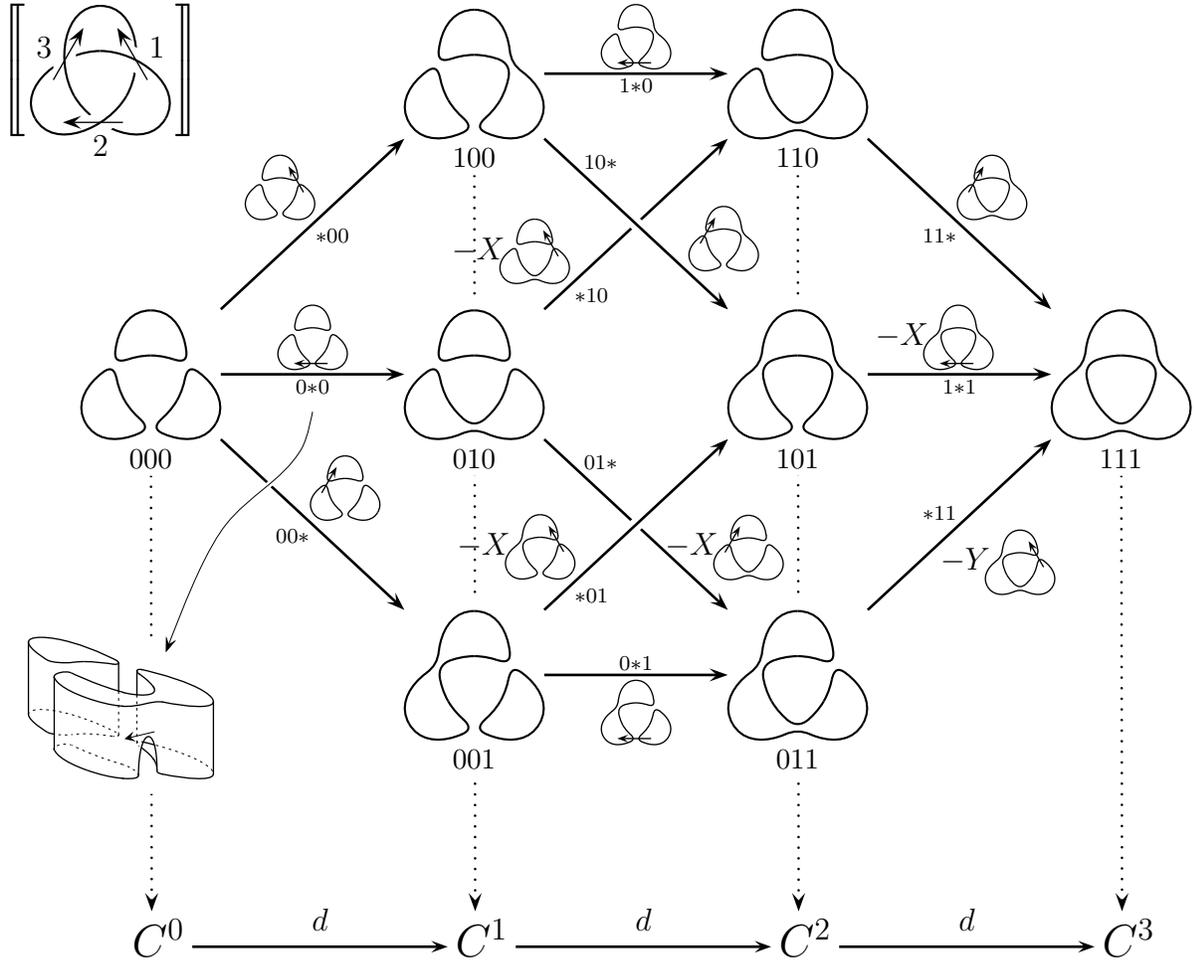}\end{center}
	\caption{The~cube of resolutions and the~generalized Khovanov complex for the~right-handed trefoil.}
	\label{fig:trefoil-cube}
\end{figure}

\subsection{Sign assignments}
The~cube $\KhCube{D}$ does not commute, but there is a~cubical cocycle $\psi\in C^2(I^n;\invScalars)$, the~\emph{commutativity obstruction}, such that for every face $S$ of the~form
\begin{equation}\label{diag:ex-face}
	\begin{diagps}(-2cm,-1.1cm)(2cm,1.4cm)
		\psset{labelsep=1pt}
		\node 00(-1.5cm, 0cm)[D_{00}]
		\node 01( 0.0cm,-1cm)[D_{01}]
		\node 10( 0.0cm, 1cm)[D_{10}]
		\node 11( 1.5cm, 0cm)[D_{11}]
		\arrow|a|{->}[00`10;W_{{\star}0}]
		\arrow|a|{->}[10`11;W_{1{\star}}]
		\arrow|b|{->}[00`01;W_{0{\star}}]
		\arrow|b|{->}[01`11;W_{{\star}1}]
	\end{diagps}
\end{equation}
%
the~twisted commutativity $W_{1{\star}}W_{{\star}0} = \psi(S)W_{{\star}1}W_{0{\star}}$ holds. Given a~face $S$ as in \eqref{diag:ex-face} consider the~resolution $D_{00}$ with two arrows orienting the~four cobordisms. After removing isolated circles, i.e.\ those not touched by any arrow, we end up with one of the~diagrams listed in Tab.~\ref{tab:psi-values}; the~table defines the~value of $\psi(S)$. In the~view of Theorem~\ref{thm:kChCob-nondeg} in most cases $\psi(S)$ is determined by the~relations \eqref{rel:reverse-orientation}, \eqref{rel:connected-sum}, and \eqref{rel:disjoint-union}. The~only exceptions are the~one circle configurations at the~bottom---in these cases the~two compositions in \eqref{diag:ex-face} represent diffeomorphic cobordisms with positive genus, so that $\psi(S)$ can be either $1$ or $\permMM\permSS$.

If $\delta\epsilon=-\psi$ for a~cubical cochain $\epsilon\in C^1(I^n;\invScalars)$, the~corrected cube $\KhCubeSigned{D}\epsilon$, in which each cobordism $D_\zeta$ is multiplied by $\epsilon(\zeta)$, anticommutes. We call such a~1-cochain a~\emph{sign assignment} following \cite{OddKh}. It is shown in \cite{ChCob} that such a~sign assignment always exists and is unique up to an~isomorphism of cubes.

\begin{table}[t]
	\psset{unit=1cm}%
	\centering
	\begin{minipage}{5.5cm}
		\begin{center}
			{\Large $\permMM$}
			\par\pictDisMM{}{-}{}{-}
			\par\pictConnMM{}{-}{}{-}
			\par\pictConnX{}{->}{}{<-}
		\end{center}
	\end{minipage}
	\begin{minipage}{4.5cm}
		\begin{center}
			{\Large $\permSS$}
			\par\pictDisSS{}{-}{}{-}
			\par\pictConnSS{}{-}{}{-}
			\par\pictConnX{}{->}{}{->}
		\end{center}
	\end{minipage}
	\vskip\baselineskip
	\begin{minipage}[t]{5cm}
		\begin{center}
			{\Large $\permMS$}
			\par\pictDisMS{1}{-}{2}{-}
			\par\pictConnMS{1}{-}{2}{-}
		\end{center}
	\end{minipage}
	\begin{minipage}[t]{2.5cm}
		\begin{center}
			{\Large $\permTpos$}
			\vskip 0.75\baselineskip
			\pictConnT{}{->}{}{->}
		\end{center}
	\end{minipage}
	\begin{minipage}[t]{2.5cm}
		\begin{center}
			{\Large $\permTneg$}
			\vskip 0.75\baselineskip
			\pictConnT{}{->}{}{<-}
		\end{center}
	\end{minipage}
	\vskip 0.5\baselineskip
	\caption[Diagrams for faces that can appear in a~cube of resolutions]{%
		Diagrams for faces that can appear in a~cube of resolutions, grouped by values of the~commutativity obstruction $\psi$. Thin lines are the~input circles and thick arrows visualize saddle points. Orientations of the~arrows are omitted if $\psi$ does not depend on them. The~small numbers 1 and 2 in the~two configurations placed under the~letter $\permMS$ indicate an~initial order of critical points (the~upper path in \eqref{diag:ex-face}); take $\permSM$ for the~opposite one.}
	\label{tab:psi-values}
\end{table}


\subsection{Grading}

Let $\ell(D_\xi)$ be the~number of circles in the~state $D_\xi$ of a~link diagram $D$. We refine the~cube of resolutions $\KhCubeSigned{D}{\epsilon}$ to the~\emph{graded cube} $\KhCubeGradedSigned{D}{\epsilon}$ by setting
\begin{equation}
	\KhCubeGradedSigned{D}{\epsilon}_\xi :=
		D_\xi\left\{\deg W_\xi\right\} =
		D_\xi\left\{
			\tfrac{\|\xi\|-\ell(\xi)+\ell_0}{2},
			\tfrac{\|\xi\|+\ell(\xi)-\ell_0}{2}
		\right\},
\end{equation}
where $\|\xi\| := \xi_1+\dots+\xi_n$ is the~weight of the~vertex $\xi=(\xi_1,\dots,\xi_n)$, $W_\xi$ is a~cobordism encoded by any directed path from the~initial vertex $(0\cdots 0)$ to $\xi$, and $\ell_0:=\ell(D_0)$ is the~number of circles in the~initial state (all crossing are smoothed in type 0). Commutativity of the~cube guarantees $\deg W_\xi$ is independent of the~path chosen, and it can be computed from the~number of input and output circles using Lemma~\ref{lem:chdeg-vs-bdry}. All morphisms in $\KhCubeGradedSigned{D}{\epsilon}$ are graded.\footnote{
	A~morphism $f$ is \emph{graded} if $\deg f = (0,0)$.
}

The~integral grading from \cites{KhHom,OddKh,ChCob}, wich we write as $\deg_q$, can be recovered from this construction by adding the~two components of $\chdeg$. For cobordisms it is equal to the~Euler characteristic: $\deg_q W = \chi(W)$. We shall refer to it as the~\emph{collapsed grading}.

\subsection{The~complex}
The~generalized Khovanov complex is constructed in the~\emph{additive closure} $\catAdd{\kChCob_0}$ of the~category $\kChCob_0$, which objects are finite sequence (vectors) of objects from $\kChCob_0$, and morphisms are matrices with linear combinations of chronological cobordisms as its entries; a~direct sum in $\catAdd{\kChCob_0}$ is realized by concatenation of sequences.

\begin{definition}
	Let $D$ be a~link diagram with enumerated and oriented crossings. Given a~sign assignment $\epsilon$ for the~cube $\KhCube{D}$ we define the~\emph{generalized Khovanov bracket} as the~chain complex $\KhBracket{D}_\epsilon$ in the~category $\catAdd{\kChCob_0}$ with
	\begin{equation}\label{eq:bracket-def}
		\KhBracket{D}_\epsilon^i := \bigoplus_{|\xi|=i} D_\xi\left\{
			\tfrac{\|\xi\|-\ell(\xi)+\ell_0}{2},
			\tfrac{\|\xi\|+\ell(\xi)-\ell_0}{2}
		\right\},
			\hskip 1cm
		d^i|_{D_\xi} := \sum_{\mathclap{\zeta\colon\xi\to<0.7em>\xi'}} \epsilon(\zeta)D_\zeta.
	\end{equation}
	The~\emph{generalized Khovanov complex} $\KhCom(D)$ is obtained from $\KhBracket{D}_\epsilon$ by shifting both homological and internal grading: $\KhCom^i(D) := \KhBracket{D}_\epsilon^{i+n_-}\!\left\{\frac{n_+-\ell_0}{2}-n_-,\frac{n_++\ell_0}{2}-n_-\right\}$.
\end{definition}

One can think of $\KhBracket{D}_\epsilon$ as the~cube $\KhCubeGradedSigned D\epsilon$ collapsed along diagonals, which is illustrated by dotted arrows in Fig.~\ref{fig:trefoil-cube}.

\begin{remark}\label{rmk:half-int-deg-shift}
	It seems more natural to shift the~degrees in $\KhCube{D}$ by $\left(\tfrac{\|\xi\|-\ell(\xi)}{2},\tfrac{\|\xi\|+\ell(\xi)}{2}\right)$ without the~terms $\ell_0$, as the~latter is removed in $\KhCom(D)$. However, the~numbers $\|\xi\|\pm \ell(\xi)$ can be odd leading to a~grading by half-integers, in which case the~formula \eqref{rel:disjoint-union} requires square roots of $\permMM$, $\permSS$, and $\permMS$ to make sense. We shall not encounter this problem with the~chosen convention---all construction will be defined for the~bracket, and a~global degree shift does not introduce any additional signs.
\end{remark}

\begin{theorem}\label{thm:invariance}
	The~homotopy type of the~generalized Khovanov complex $\KhCom(D)$ is a~link invariant, when regarded as a~complex in the~category $\catAdd{\kChCob_0}$ modulo the~following three local relations:
	\begin{align*}
		\textnormal{(\textit{S})}&\psset{unit=1cm}\quad\pictRelS = 0
		\hskip 2cm
		\textnormal{(\textit{T})}\quad\pictRelT = \permMS(\permMM+\permSS)\\
		\textnormal{(\textit{4Tu})}&\quad\psset{unit=0.75cm}%
			\permMS\pictRelTuL + \permMS\pictRelTuR = \permMM\pictRelTuB + \permSS\pictRelTuT
	\end{align*}
	in which all deaths are oriented clockwise.
\end{theorem}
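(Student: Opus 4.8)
The plan is to follow the proof of the corresponding statement in \cite{ChCob} for the integer-graded Khovanov bracket and to upgrade it: for each Reidemeister move I would reproduce the explicit chain homotopy equivalence $\KhBracket{D}_\epsilon\simeq\KhBracket{D'}_{\epsilon'}$ used there between the brackets of two diagrams related by the move (the global degree shift relating the bracket to $\KhCom$ plays no role, cf.\ Remark~\ref{rmk:half-int-deg-shift}), and then verify that it is homogeneous of bidegree $(0,0)$, hence a genuine morphism of complexes in $\catAdd{\kChCob_0}$ for the refined $\Z\times\Z$-grading. Equivalence is taken in $\catAdd{\kChCob_0}$ modulo the relations (\textit{S}), (\textit{T}), (\textit{4Tu}).

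\proofpart{The simplification toolkit} First I would recall the two elementary operations from which the equivalences are assembled. \emph{Delooping}: an object containing an isolated circle, $\bigcirc\sqcup\Sigma$, is isomorphic in the quotient category to $\Sigma\{1,0\}\oplus\Sigma\{0,-1\}$, the structure maps being births and deaths, two of them decorated with a dot; that the two composites are the identity, up to the $\permMM,\permSS,\permMS$-coefficients forced by \eqref{rel:reverse-orientation}--\eqref{rel:disjoint-union}, is exactly what the local relations (\textit{S}), (\textit{T}) and (\textit{4Tu}) --- equivalently, the neck-cutting relation they entail --- are for. \emph{Gaussian elimination}: a component of a differential that is an isomorphism may be cancelled together with its target, the remaining matrix entries picking up the standard $A-BD^{-1}C$ correction. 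Both operations are available over $\kChCob_0$ by \cite{ChCob}; the point is to push the bigrading through them.

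\proofpart{The Reidemeister moves} For $R1$ I would deloop the circle that appears in the $1$-smoothing of the new crossing and Gaussian-eliminate the saddle joining the two smoothings; one of the two delooping summands cancels and the other is $\KhBracket{D'}_{\epsilon'}$. For $R2$ the bracket has four vertices; delooping the circle in the resolution that contains a closed loop and performing the two available eliminations collapses it onto the two-term complex of $D'$. $R3$ is then obtained as in \cite{ChCob}: after delooping, both brackets reduce to one and the same complex, the $R2$ equivalence being used to pull a strand across a crossing. In every case $\epsilon'$ is the restriction or extension of $\epsilon$, which is legitimate because sign assignments exist and are unique up to cube isomorphism \cite{ChCob}.

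\proofpart{Homogeneity and the main difficulty} The new content is the grading check. The degree shift attached to $D_\xi$ in \eqref{eq:bracket-def} is rigged so that every edge cobordism is graded: an edge $D_\zeta\colon D_\xi\to D_{\xi'}$ with $\|\xi'\|=\|\xi\|+1$ is a single saddle, either a merge ($\ell$ drops by one, $\chdeg D_\zeta=(-1,0)$) or a split ($\ell$ rises by one, $\chdeg D_\zeta=(0,-1)$), and in each case \eqref{eq:deg-in-ChCob0} together with the change of shift across the edge yields degree $(0,0)$; by Lemma~\ref{lem:chdeg-vs-bdry} the shift depends only on $\ell(\xi)$, $\ell_0$ and $\|\xi\|$, so it is path-independent and the graded cube is well-defined. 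The delooping maps are graded by the same bookkeeping --- a birth has $\chdeg=(1,0)$, a death $\chdeg=(0,1)$ and a dot $\chdeg=(-1,-1)$, matching the shifts $\{1,0\}$ and $\{0,-1\}$ on the two summands --- while Gaussian elimination only cancels graded isomorphisms and composes graded maps. Hence every equivalence of the previous step is a bidegree-$(0,0)$ morphism, as claimed. I expect the real labour to lie not in any single move but in the simultaneous bookkeeping of sign assignments \emph{and} of the $\permMM,\permSS,\permMS$-coefficients: one must check that the coefficients thrown off by the delooping isomorphisms and by the $A-BD^{-1}C$ corrections are precisely those dictated by \eqref{rel:reverse-orientation}--\eqref{rel:disjoint-union} on both diagrams. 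The $\Z\times\Z$-grading is what keeps this rigid, since it pins down which relation applies at each step; the heaviest case is $R3$, whose cube is large enough that the cancellations must be arranged with care, exactly as in \cite{ChCob}.
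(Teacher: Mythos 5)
Your grading bookkeeping (edge saddles are graded by construction of the shifts, births have degree $(1,0)$, deaths $(0,1)$, and Gaussian elimination preserves gradedness) is sound and is indeed the genuinely new content; the paper does exactly this kind of check. But your route to the homotopy equivalences has a genuine gap: the theorem lives in $\catAdd{\kChCob_0}$ modulo \textit{S}, \textit{T}, \textit{4Tu} \emph{without dotted cobordisms}, and in that category the delooping isomorphism you want to use simply does not exist. Its structure maps require a dot (or, equivalently, neck-cutting with dots, or inverting $2$ so that a dot can be traded for half a handle); the undotted \textit{4Tu} relation alone does not split the circle as $\Sigma\{1,0\}\oplus\Sigma\{0,-1\}$. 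Constructing the equivalences in an enlarged dotted category would not by itself prove invariance in the category named in the statement, and in the chronological setting even dotted delooping needs a separate verification that the $\permMM,\permSS,\permMS$-coefficients forced by the relations \eqref{rel:reverse-orientation}--\eqref{rel:disjoint-union} are consistent, which your sketch does not address.

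The paper takes a different and shorter path that avoids this entirely: it reuses the explicit, dot-free chain homotopy equivalences already constructed in \cite{ChCob} (for R1 a map built from a death and from cobordisms with a tube, for R2 the maps of Fig.~\ref{fig:R-II} viewed through the bicomplex decomposition, and for R3 a purely algebraic mapping-cone argument), and then only verifies that each of these is homogeneous of bidegree $(0,0)$ once the shifts of \eqref{eq:bracket-def} are taken into account --- e.g.\ for R1 the death has degree $(0,1)$, which is exactly compensated by the extra circle in $\fntRIv$. If you want to salvage your approach you would have to either work in the dotted chronological category and then descend, or replace delooping by the explicit combinations of undotted cobordisms (as in \cite{ChCob}); as written, the ``simplification toolkit'' step is not available in the stated category.
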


\begin{proof}
	We showed in \cite{ChCob} that the~isomorphism class of $\KhBracket{D}_\epsilon$ depends only on the~diagram $D$, and all isomorphisms involved are induced by cobordisms with no critical points. Hence,	this still holds in the~bigraded framework. The~relations \textit{S}, \textit{T}, \textit{4Tu} are clearly homogeneous, so that the~quotient category is still graded. It remains to show the~chain homotopy equivalences from \cite{ChCob} are graded.
	
	\proofpart{First Reidemeister move}
	\wrapfigure[r]<4>{\psset{unit=0.75cm}%
		\begin{pspicture}(-5.2,0)(5.5,5.7)
			\diagnode tl(0,5)[\vcenter{\hbox{\tangleRIh}}]
			\diagnode tr(5,5)[0]
			\diagnode bl(0,1)[\vcenter{\hbox{\tangleRIv}}]
			\diagnode br(5,1)[\vcenter{\hbox{\tangleRIh}}]
			\diagarrow|a|{<->}[tl`tr;0]
			\diagarrow|a|{<->}[tr`br;0]
			\diagarrow[offset=-2pt]|b{labelsep=3pt}|{->}[bl`br;d\,=\,\epsilon\fntCobRId]
			\diagarrow[offset= 2pt]|a{labelsep=3pt}|{<-}[bl`br;h\,=\,-\epsilon^{-1}\fntCobRIh]
			\diagarrow[offset=-2pt]|b{labelsep=3pt}|{->}[tl`bl;\frac\permSS\alpha\left(\!\permMM\fntCobRIfa -\,\permMS\fntCobRIfb\!\right)\,=\,f]
			\diagarrow[offset= 2pt]|a{npos=0.3,labelsep=3pt}|{<-}[tl`bl;g\,=\,\alpha\permMM\permSM\fntCobRIg]
		\end{pspicture}}
	The~bracket $\KhBracket*\fntRIx$ is the~mapping cone of $\KhBracket*\fntRIv\to\KhBracket*\fntRIh$, the chain map induced by the~edges in $\KhCube\fntRIx$ associated with the~distinguished crossing. The~chain homotopy equivalences between $\KhBracket*\fntRIx$ and $\KhBracket*\fntRIh$ from \cite{ChCob} are induced by morphisms of cubes $f\colon \KhCube{\fntRIh} \twoways \KhCube{\fntRIv} \cocolon g$ with components visualized in the~diagram below. Here, $\epsilon$ comes from the~sign assignment used to build $\KhBracket*\fntRIx$, and $\alpha\in\scalars$ is chosen for each component of $f$ and $g$ separately to make them commute with other edge morphisms in the~cubes.	Because $g\colon \KhBracket\fntRIx \to \KhBracket\fntRIh$ is induced by a~death, $\deg g = (0,1)$. Likewise, the~two components of $f\colon \KhBracket\fntRIh \to \KhBracket\fntRIx$ are cobordisms of degree $(0,-1)$. Since $\fntRIv$ has one more circle than $\fntRIh$, both $f$ and $g$ become graded morphisms when the~degree shifts are applied.

	\parshape=0
	\proofpart{Second Reidemeister move}
	Here we consider $\KhBracket*{\fntRIIxx}$ as a~total complex of the~bicomplex
	\begin{displaymath}
		0	\to	\KhBracket*{\fntRIIvh}
			\to \KhBracket*{\fntRIIvv} \oplus \KhBracket*{\fntRIIhh}
			\to \KhBracket*{\fntRIIhv}
			\to 0.
	\end{displaymath}
	The~chain homotopy equivalences between $\KhBracket*{\fntRIIxx}$ and $\KhBracket*{\fntRIIhhs}$ are then induced by maps shown in Fig.~\ref{fig:R-II}. The~morphisms $\fntRIIhhs \twoways \fntRIIhh$ are graded: both diagrams have the~same number of circles, and the~difference between heights of corresponding vertices in cubes ($\fntRIIhh$ has one more type $1$ resolution than $\fntRIIhhs$) is compensated by the~difference between numbers of crossings. The~other components of $f$ and $g$ are also graded morphisms, which follows from their definitions
	\begin{align}
		\left(\fntRIIhhs \to^f \fntRIIvv\right) &:= 
			\left(\fntRIIhhs 
				\to<2em>^{\id}\fntRIIhh
				\to<2em>^{d_{1{*}}}\fntRIIhv
				\to<2em>^{h_{{*}1}}\fntRIIvv\right) \\
		\left(\fntRIIvv \to^g \fntRIIhhs\right)	&:= 
			\left(\fntRIIvv
				\to<2em>^{h_{0{*}}}\fntRIIvh
				\to<2em>^{d_{{*}0}}\fntRIIhh
				\to<2em>^{\id}\fntRIIhhs\right)
	\end{align}
	and the~fact that both $d$ and $h$ are graded morphisms in the~graded complex ($d$ is graded by the~construction and $h$ has the~opposite degree to $d$).

\begin{figure}[h]
	\psset{unit=0.75cm}
	\begin{pspicture}(0,0)(12,9)
		\newpsobject{diagarc}{ncarc}{%
				linewidth=0.5pt,doublesep=1.5pt,
				arrowsize=4pt 1,arrowlength=0.8,arrowinset=0.6,
				labelsep=2pt,nodesep=3pt}
		\rput[l](-1,8.0){$\KhBracket*{\vcenter{\hbox{\tangleRIIhhs}}}:$}
		\rput[l](-1,2.5){$\KhBracket*{\vcenter{\hbox{\tangleRIIxx}}}\![\,1]:$}
		\psline{<->}( 0,6.25)(0,4.25)\uput[ur]( 0,6.25){$\scriptstyle g$}\uput[dr](0,4.25){$\scriptstyle f$}
		\psline{<->}(-1,5.25)(1,5.25)\uput[ul](-1,5.25){$\scriptstyle h$}\uput[ur](1,5.25){$\scriptstyle d$}
		\rput( 3.5,8.0){\rnode{t-1}{$0$}}
		\rput( 8.0,8.0){\rnode{t 0}{$\KhBracket*{\vcenter{\hbox{\tangleRIIhhs}}}$}}
		\rput(12.5,8.0){\rnode{t 1}{$0$}}
		\rput( 3.5,2.5){%
			\rlap{\raisebox{-3ex}[0pt][0pt]{$\scriptstyle\mskip\thickmuskip 00$}}%
			\rnode{b00}{$\KhBracket*{\vcenter{\hbox{\tangleRIIvh}}}$}%
		}
		\rput( 7.0,1.0){%
			\rlap{\raisebox{-3ex}[0pt][0pt]{$\scriptstyle\mskip\thickmuskip 01$}}%
			\rnode{b01}{$\KhBracket*{\vcenter{\hbox{\tangleRIIvv}}}$}%
		}
		\rput( 9.0,4.0){%
			\rlap{\raisebox{-3ex}[0pt][0pt]{$\scriptstyle\mskip\thickmuskip 10$}}%
			\rnode{b10}{$\KhBracket*{\vcenter{\hbox{\tangleRIIhh}}}$}%
		}
		\rput(12.5,2.5){%
			\rlap{\raisebox{-3ex}[0pt][0pt]{$\scriptstyle\mskip\thickmuskip 11$}}%
			\rnode{b11}{$\KhBracket*{\vcenter{\hbox{\tangleRIIhv}}}$}%
		}
		\diagline{->}{t-1}{t 0}\naput{$\scriptstyle 0$}
		\diagline{->}{t 0}{t 1}\naput{$\scriptstyle 0$}
		\diagline{->}{b00}{b10}\naput[labelsep=-1pt,npos=0.3]{$\scriptstyle \epsilon_{{*}0}\fntCob{R2-d*0}$}
		\diagline{->}{b10}{b11}\naput[labelsep=-7pt,npos=0.4]{$\scriptstyle \epsilon_{1{*}}\fntCob{R2-d1*}$}
		\diagline[offset= 2pt]{->}{b00}{b01}
				\naput[labelsep=-6pt,npos=0.5]{$\scriptstyle \epsilon_{0{*}}\fntCob{R2-d0*}$}
		\diagline[offset= 2pt]{->}{b01}{b11}
				\naput[labelsep=-1pt,npos=0.3]{$\scriptstyle \epsilon_{{*}1}\fntCob{R2-d*1}$}
		\diagline[offset=-2pt]{<-}{b00}{b01}
				\nbput[labelsep= 0pt,npos=0.5]{$\scriptstyle-\epsilon_{0{*}}^{-1}\permSS\fntCob{R2-h0*}$}
		\diagline[offset=-2pt]{<-}{b01}{b11}
				\nbput[labelsep=-6pt,npos=0.5]{$\scriptstyle-\epsilon_{{*}1}^{-1}\fntCob{R2-h*1}$}
		\diagline{<->}{b00}{t-1}\naput{$\scriptstyle 0$}
		\diagline{<->}{b11}{t 1}\nbput{$\scriptstyle 0$}
		\diagarc[arcangle= 12]{<->}{t 0}{b10}
				\naput[npos=0.3]{$\scriptstyle\id$}
		\diagarc[arcangle= 12,offset= 2pt,border=3\pslinewidth]{->}{b01}{t 0}
				\naput[labelsep=-2pt,npos=0.75]{$\scriptstyle\gamma\fntCob{R2-G}$}
		\diagarc[arcangle= 12,offset=-2pt,border=3\pslinewidth]{<-}{b01}{t 0}
				\nbput[labelsep=-1pt,npos=0.65]{$\scriptstyle\varphi\!\fntCob{R2-F}$}
		\rput[c](10.7,6){$\setlength\arraycolsep{0pt}\begin{array}{rl}
				\scriptstyle\gamma  &\scriptstyle= -\epsilon_{{*}0}^{\vphantom{-1}}\epsilon_{0{*}}^{-1}\permSS\\
				\scriptstyle\varphi &\scriptstyle= -\epsilon_{1{*}}^{\vphantom{-1}}\epsilon_{{*}1}^{-1}
			\end{array}$}
	\end{pspicture}
	\caption{Chain homotopy equivalences for the~second Reidemeister move.}\label{fig:R-II}
\end{figure}

	\proofpart{Third Reidemeister move}
	This case follows from a~strictly algebraic argument: the~complex $\KhBracket\fntRIIIax$ is the~mapping cone of the~chain map $\KhBracket\fntRIIIac\colon \KhBracket\fntRIIIah \to \KhBracket\fntRIIIav$, and composing it with the~chain homotopy equivalence $f\colon \KhBracket\fntIIIVert \to \KhBracket\fntRIIIah$ does not change the~homotopy type of the~mapping cone, see \cite{ChCob}. Hence, $\KhBracket\fntRIIIax\simeq\cone(\KhBracket\fntIIIVert \to \KhBracket\fntRIIIav)$ via degree-preserving chain homotopy equivalences, and similarly for $\KhBracket\fntRIIIbx$.
\end{proof}

\section{Homology}\label{sec:homology}
Applying a~chronological TQFT $\F\colon\kChCob \to \Mod\scalars$ to the~complex $\KhCom(D)$ results in a~chain complex $\F\KhCom(D)$ of bigraded $\scalars$-modules. If $\F$ preserves the~relations \textit{S}, \textit{T}, and \textit{4Tu} from Theorem~\ref{thm:invariance}, the~homology of the~chain complex $\F\KhCom(D)$ is invariant under Reidemeister moves. In particular, we can choose the~TQFT from Example~\ref{ex:Kh-tqft} and write $\Kh(L)$ for homology of the~corresponding chain complex. As usual, given a~$\scalars$-module $M$ we define $\Kh(L;M)$ as the~homology of the~chain complex $\F\KhCom(D)\otimes M$.

\begin{proposition}[cf.\ \cite{ChCob}]
	Choose a~link $L$ and consider $\Kh(L)$ with the~collapsed integral grading. Then the~following holds:
	\begin{enumerate}
		\item the~graded Euler characteristic $\chi(\Kh(L))$ is the~Jones polynomial of $L$,
		\item $\Kh(L;\Zev)$ is the~Khovanov homology of $L$ \cite{KhHom}, and
		\item $\Kh(L;\Zodd)$ is the~odd Khovanov homology of $L$ \cite{OddKh}.
	\end{enumerate}
\end{proposition}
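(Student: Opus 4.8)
The plan is to deduce all three statements from the results of \cite{ChCob}, the bridge being an identification of the \emph{collapsed} grading $\deg_q$ on $\KhCom(D)$ with the single integral grading used there. First I would record this identification: on cobordisms $\deg_q W=\chi(W)$ by the remarks following the definition of $\KhCubeGradedSigned{D}{\epsilon}$, and adding the two components of the bidegree attached to each vertex $D_\xi$ in \eqref{eq:bracket-def}---together with the overall shift in the passage from $\KhBracket{D}_\epsilon$ to $\KhCom(D)$---reproduces exactly the integral degree shifts of \cite{ChCob}. Consequently $\KhCom(D)$ equipped with $\deg_q$ \emph{is} the integrally graded generalized Khovanov complex of \cite{ChCob}, and $\Kh(L)$ is the homology of $\FA\KhCom(D)$; each of (1)--(3) is then the corresponding statement proved there. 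I would nevertheless sketch the three parts to keep the account self-contained.

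For (1) I would compute the graded Euler characteristic on the chain level, using that it agrees with that of the homology for a bounded complex of finitely generated modules. Since $v_+$ sits in collapsed degree $1$ and $v_-$ in collapsed degree $-1$, a resolution $D_\xi$ with $\ell(\xi)$ circles contributes $(q+q^{-1})^{\ell(\xi)}$, shifted by the exponents in \eqref{eq:bracket-def} and by the writhe correction built into the definition of $\KhCom(D)$; summing against $(-1)^i$ yields the Kauffman-bracket state sum of $D$ with its framing factor, i.e.\ the unnormalised Jones polynomial of $L$. This is the same bookkeeping as in \cite{KhHom} and is insensitive to the chosen $\scalars$-module structure, hence valid for all eight specialisations simultaneously.

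For (2) and (3) I would base change the functor $\FA$ along the ring maps $\scalars\to\Zev$ and $\scalars\to\Zodd$. Over $\Zev$ all of $\permMM,\permSS,\permMS$ act as $1$, so the chronological relations \eqref{rel:reverse-orientation}--\eqref{rel:disjoint-union} become equalities, the chronology decorating each cobordism becomes immaterial, and $\FA\otimes_\scalars\Zev$ is the classical Khovanov Frobenius algebra with the product, coproduct, unit and counit read off from \eqref{eq:F-merge}--\eqref{eq:F-death}; a sign assignment degenerates to an ordinary $\{\pm1\}$-valued $1$-cochain correcting the now $\pm1$-valued commutativity obstruction, which is precisely Khovanov's signed cube (unique up to isomorphism of cubes, \cite{ChCob}). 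Hence $\FA\KhCom(D)\otimes_\scalars\Zev$ is isomorphic, as a bigraded complex, to Khovanov's complex with its usual $(i,j)$ grading, which proves (2). Over $\Zodd$ only $\permSS$ becomes $-1$: the merge is unchanged while the split acquires a sign, $v_+\mapsto v_-\otimes v_+-v_+\otimes v_-$, and the disjoint-union and connected-sum relations become signed exactly as in odd Khovanov homology; combining this with the sign assignment recovers the complex of \cite{OddKh}, which gives (3).

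The step I expect to demand the most care is the sign matching in (3): one must verify that the signs produced by specialising the commutativity obstruction $\psi$ and a sign assignment $\epsilon$ at $\permSS=-1$ agree with the sign assignment of \cite{OddKh}, rather than differing by a spurious coboundary. Since precisely this verification was carried out in \cite{ChCob} and the reduction sketched above is purely formal, I would simply invoke it; the genuinely new ingredient in the present proposition is only the collapsed-grading identification, which is routine.
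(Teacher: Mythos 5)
Your proposal is correct and follows essentially the paper's own route: the paper offers no argument beyond the attribution ``cf.~\cite{ChCob}'', having already observed in Section~\ref{sec:khov-def} that adding the two components of $\chdeg$ recovers the integral grading $\deg_q$ of \cites{KhHom,OddKh,ChCob}, so that the collapsed complex is literally the generalized Khovanov complex of \cite{ChCob} and all three statements are the ones proved there. Your additional sketches (state-sum Euler characteristic, base change along $\scalars\to\Zev$ and $\scalars\to\Zodd$) are accurate reconstructions of that cited argument rather than a new approach.
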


One may expect that the~new gradation on $\KhCom(D)$ results in a~stronger invariant. However, as long as $\F(\fntCircle)$ is generated in degrees $(1,0)$ and $(0,-1)$ the~bigraded graded chain complex $\F\KhCom(D)$ carries no more information than the~one with the~collapsed grading.

\begin{lemma}
	Let $\F\colon\kChCob \to \Mod\scalars$ be a~chronological TQFT such that $A:=\F(\fntCircle)$ is supported in degrees $(1,0)$ and $(0,-1)$ only. Then $\F\KhCom^{i}(D)_{p,q}\neq 0$ only if $p=q$.
\end{lemma}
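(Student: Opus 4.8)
The plan is a short parity computation: I will show that every homogeneous element of $\F\KhCom^{i}(D)$ is forced onto the diagonal $p=q$ by tracking how the bidegree behaves through the two degree shifts appearing in the definition of $\KhCom$.

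First I would record what $\F$ does to a single resolution $D_\xi$. Since $\F$ is a chronological TQFT it carries $\rdsum$ into the graded tensor product $\otimes$, and on underlying graded modules the graded tensor product is just the ordinary tensor product — the parameter $\lambda$ in \eqref{eq:graded-tensor-morphisms} only twists morphisms, not the grading on objects. Hence a disjoint union of $\ell$ circles is sent to $A^{\otimes\ell}$ with the product grading. By hypothesis $A$ is concentrated in bidegrees $(1,0)$ and $(0,-1)$, each of which satisfies $p-q=1$; therefore every homogeneous element of $A^{\otimes\ell}$ has $p-q=\ell$. In particular $\F(D_\xi)$ is supported in bidegrees with $p-q=\ell(\xi)$.

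Next I would feed in the vertex shift. In $\KhCubeGradedSigned{D}{\epsilon}$ the state $D_\xi$ is shifted by $\chdeg W_\xi = \bigl(\tfrac{\|\xi\|-\ell(\xi)+\ell_0}{2},\ \tfrac{\|\xi\|+\ell(\xi)-\ell_0}{2}\bigr)$, and a shift by $(a,b)$ adds $(a,b)$ to the bidegree of every homogeneous element, so it changes $p-q$ by $a-b$; here $a-b=\ell_0-\ell(\xi)$. Combining with the previous paragraph, every homogeneous element of $\F\bigl(D_\xi\{\chdeg W_\xi\}\bigr)$ has $p-q = \ell(\xi)+(\ell_0-\ell(\xi)) = \ell_0$, \emph{independent of $\xi$}. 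Taking the direct sum over all $\xi$ with $|\xi|=i$ shows $\F\KhBracket{D}_\epsilon^{i}$ is supported in bidegrees with $p-q=\ell_0$. Finally, since $\KhCom^i(D) = \KhBracket{D}_\epsilon^{i+n_-}\bigl\{\tfrac{n_+-\ell_0}{2}-n_-,\ \tfrac{n_++\ell_0}{2}-n_-\bigr\}$ and the difference of the two components of this last shift is $-\ell_0$, I conclude that $\F\KhCom^i(D)$ is supported in bidegrees with $p-q=\ell_0-\ell_0=0$, that is, $\F\KhCom^{i}(D)_{p,q}=0$ unless $p=q$.

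I do not expect a genuine obstacle here; the only points needing a little care are (i) verifying that the graded tensor product does not perturb the grading on objects, so that $\F$ applied to $\ell$ disjoint circles is genuinely $A^{\otimes\ell}$ with the product grading, and (ii) keeping the sign conventions for the degree-shift functor $\{\,\cdot\,\}$ straight so that the two bookkeeping contributions $\ell_0-\ell(\xi)$ and $-\ell_0$ come out with the correct signs. Everything else is elementary arithmetic.
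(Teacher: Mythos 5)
Your proposal is correct and follows essentially the same route as the paper: observe that, since $A$ sits in bidegrees $(1,0)$ and $(0,-1)$, a homogeneous element of $\F(D_\xi)=A^{\otimes \ell(\xi)}$ has $p-q=\ell(\xi)$, and then check that the vertex shift contributes $\ell_0-\ell(\xi)$ and the global shift $-\ell_0$ to $p-q$, forcing $p=q$. The paper phrases this by writing the degree as $(d,d-k)$ and applying the combined shift in one step, but the bookkeeping is identical.
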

\begin{proof}
	Let $k:=\ell(D_\xi)$ be the~number of circles in a~state $D_\xi$ of the~diagram $D$. A~homogeneous element $u\in\F(D_\xi) = A^{\otimes k}$ has degree $\deg u = (d,d-k)$ for some $d\in\Z$. When regarded as an~element of $\F\KhCom(D)$ we have to adjust the~degree by $\left(\frac{\|\xi\|-k+n_-}{2}-n_-,\frac{\|\xi\|+k+n_+}{2}-n_-\right)$. Hence, $u\in\F\KhCom^i(D)_{p,p}$ with $p = \frac{\|\xi\|-k+n_+}{2} + d - n_-$.
\end{proof}

\begin{corollary}
	The~graded Euler characteristic of $\Kh(L)$, regarded as a~tripple graded $\scalars$-module, is the~Jones polynomial of $L$ evaluated at $\sqrt{uv}$.
\end{corollary}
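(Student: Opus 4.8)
The plan is to read the corollary off from the preceding Lemma together with part~(1) of the Proposition; the only content is keeping track of gradings. Write $\chi(\Kh(L))(u,v):=\sum_{i,p,q}(-1)^i\rk_{\scalars}\Kh^{i,p,q}(L)\,u^pv^q$ for the graded Euler characteristic of $\Kh(L)$ regarded as a triply graded module, where the bidegree $(p,q)$ refines the collapsed internal grading $j=p+q$; one may equivalently take the alternating sum of graded ranks of the free complex $\F\KhCom(D)$, since the Euler characteristic is a chain-homotopy invariant. By the Proposition, the collapsed Euler characteristic $\chi(\Kh(L))(t)=\sum_{i,j}(-1)^i\rk_{\scalars}\Kh^{i,j}(L)\,t^j$ is the Jones polynomial $\hat J_L(t)$.

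First I would promote the concentration statement of the Lemma from the chain complex to its homology. The differential of $\F\KhCom(D)$ is a graded morphism, of bidegree $(0,0)$, so by the Lemma the complex splits as the finite direct sum $\bigoplus_p\F\KhCom(D)_{p,p}$ of subcomplexes; taking homology commutes with this direct sum, hence $\Kh^{i,p,q}(L)=0$ unless $p=q$.

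Then I would carry out the bookkeeping. On the diagonal $p=q$ the collapsed degree is $j=p+q=2p$, and since $\Kh^{i,j}(L)=\bigoplus_{p'+q'=j}\Kh^{i,p',q'}(L)$ reduces to its single summand with $p'=q'=j/2$, we get $\rk_{\scalars}\Kh^{i,p,p}(L)=\rk_{\scalars}\Kh^{i,2p}(L)$. Substituting,
\begin{align*}
	\chi(\Kh(L))(u,v) &= \sum_{i,p}(-1)^i\rk_{\scalars}\Kh^{i,p,p}(L)\,(uv)^p \\
	&= \sum_{i,j}(-1)^i\rk_{\scalars}\Kh^{i,j}(L)\,\bigl(\sqrt{uv}\bigr)^{j}
	= \chi(\Kh(L))\bigl(\sqrt{uv}\bigr),
\end{align*}
and the right-hand side equals $\hat J_L(\sqrt{uv})$ by the Proposition, which is the assertion.

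I do not expect a genuine obstacle: the only step deserving a word is that the chain-level statement of the Lemma descends to homology, and that is immediate once one notes the differential is graded. A by-product worth recording is that this computation explains why the separately half-integral exponents of $u$ and $v$ (which occur for links whose Jones polynomial has odd-degree terms) recombine into honest integral powers of $\sqrt{uv}$, because $p+q$ is always an integer even when $p$ and $q$ are not.
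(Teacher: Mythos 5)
Your argument is correct and is exactly the route the paper intends: the Lemma's diagonal concentration (which descends to homology since the differential has bidegree $(0,0)$) plus part~(1) of the preceding Proposition, followed by the substitution $u^pv^p=(\sqrt{uv})^{j}$. The paper leaves this as an immediate consequence, so your write-up simply makes the implicit bookkeeping explicit, including the correct observation about half-integral $p,q$ recombining into integral powers of $\sqrt{uv}$.
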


Despite this unfortunate result, the~additional grading is actually useful, which will become clear in the~next section.

\section{Khovanov complexes for composite links}\label{sec:composite}
Given complexes $C$ and $C'$ in a~graded monoidal category, which differentials $d\colon C_i\to C_{i+1}$ and $d'\colon C'_i\to C'_{i+1}$ are graded, we define their tensor product as the~chain complex $C\otimes C'$ satisfying
\begin{align}
	(C\otimes C')_i &:= \bigoplus_{p+q=i} C_p\otimes C'_q,\\
	d|_{C_p\otimes C'_q} &:= d\otimes\id + (-1)^p\id\otimes d'.
\end{align}
The~role of the~tensor product in $\kChCob$ is played by the~disjoint union $\rdsum$, in which case $\KhCom^p(D)\rdsum\KhCom^q(D')$ is represented by diagrams $D_\xi\sqcup D'_{\xi'}$ with $\|\xi\|=p$ and $\|\xi'\|=q$, which can be seen as resolutions of $D\sqcup D'$. However, the~morphisms $d\sqcup\id$ and $\id\sqcup d$ in $\KhCom(D\sqcup D')$ do not commute. The~reason they do in $\KhCom(D)\rdsum\KhCom(D')$ is the~way the~degree shifts are applied: the~objects $D_\xi\{a,b\}\sqcup D'_{\xi'}\{a',b'\}$ and $(D_\xi\sqcup D'_{\xi'})\{a+a',b+b'\}$ are isomorphic but not equal.

\begin{definition}
	Let $X$ be an~object of a~$G$-graded monoidal category and $g\in G$. The~\emph{degree shift isomorphism} $i_g\colon X\to X\{g\}$ is the~degree $g$ map corresponding to $\id_X\in\Mor(X,X)$ under the~natural bijection $\Mor(X,X\{g\})\cong\Mor(X,X)$.
\end{definition}

We use the~degree shift isomorphisms to identify both $X\{g\}\otimes Y\{h\}$ and $(X\otimes Y)\{g+h\}$ with $X\otimes Y$. There is some choice for the~first isomorphism: we can take either $i_g^{-1}\otimes i_h^{-1}$ or $(i_g\otimes i_h)^{-1}$, and a~short computation reveals the~two morphisms are not equal:
\begin{equation}
	(i_g^{-1}\otimes i_h^{-1})\circ(i_g\otimes i_h)
		= \lambda(-h,g)(i_g^{-1}\circ i_g)\otimes(i_h^{-1}\circ i_h)
		= \lambda(-h,g)\id.
\end{equation}

The~following result suggests to choose $i_g^{-1}\otimes i_h^{-1}$, as signs will then appear only when a~nontrivial morphism occurs as the~second factor (compare it with \eqref{eq:graded-tensor-functoriality}).

\begin{lemma}\label{lem:degree-vs-tensor}
	Choose an~object $X$ and a~morphism $f\colon A\to B$ of degree $\deg f = a-b$ in a~$G$-graded monoidal category. Then the~following diagrams commute
	\begin{gather}
		\label{diag:tensor-f-id}
		\begin{diagps}(14em,15ex)
			\psset{labelsep=2pt}
			\square(0em,1ex)<14em,11ex>|a`*c`*c`a|[%
				A\{a\}\otimes X\{x\}`(A\otimes X)\{a+x\}`B\{b\}\otimes X\{x\}`(B\otimes X)\{b+x\};%
				i_{a+x}\circ(i_a^{-1}\otimes i_x^{-1})`%
				\overline f\otimes\id`\overline{f\otimes\id}`%
				i_{b+x}\circ(i_b^{-1}\otimes i_x^{-1})%
			]
		\end{diagps}\\[1ex]
		\label{diag:tensor-id-f}
		\begin{diagps}(14em,15ex)
			\psset{labelsep=2pt}
			\square(0em,1ex)<14em,11ex>|a`*c`*c`a|[%
				X\{x\}\otimes A\{a\}`(X\otimes A)\{x+a\}`X\{x\}\otimes B\{b\}`(X\otimes B)\{x+b\};%
				i_{x+a}\circ(i_x^{-1}\otimes i_a^{-1})`%
				\id\otimes\overline f`\lambda(a-b,x)\overline{\id\otimes f}`%
				i_{x+b}\circ(i_x^{-1}\otimes i_b^{-1})%
			]
		\end{diagps}
	\end{gather}
	where $\overline{\vphantom{|}\cdot\vphantom{|}}$ denotes appropriate conjugations by degree shift isomorphisms $i_\bullet$.
\end{lemma}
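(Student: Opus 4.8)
The plan is to verify each square by a direct computation that reduces everything to the twisted functoriality identity \eqref{eq:graded-tensor-functoriality} together with two elementary facts about $\lambda$, namely $\lambda(\cdot,0)=\lambda(0,\cdot)=1$ and the ``bilinearity'' $\lambda(u,v)\lambda(u,v')=\lambda(u,v+v')$ (both immediate from $\lambda(a,b,a',b')=\permMM^{aa'}\permSS^{bb'}\permMS^{ab'-a'b}$). First I would unwind the bars. By definition of the natural bijection $\Mor(U\{u\},V\{v\})\cong\Mor(U,V)$, the conjugate of $g\colon U\to V$ into the shifted objects is $i_v\circ g\circ i_u^{-1}$, so $\overline f=i_b\circ f\circ i_a^{-1}$, $\overline{f\otimes\id}=i_{b+x}\circ(f\otimes\id_X)\circ i_{a+x}^{-1}$, and $\overline{\id\otimes f}=i_{x+b}\circ(\id_X\otimes f)\circ i_{x+a}^{-1}$, where the unadorned $\id$'s are identities of the \emph{unshifted} objects. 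Substituting these into \eqref{diag:tensor-f-id}, the factor $i_{a+x}^{-1}$ inside $\overline{f\otimes\id}$ telescopes against the $i_{a+x}$ in the top arrow (only the fact that each $i_g$ is an isomorphism is used), and both composites collapse to $i_{b+x}$ post-composed with a single tensor of morphisms. It then remains to compare $(f\otimes\id_X)\circ(i_a^{-1}\otimes i_x^{-1})$ with $(i_b^{-1}\otimes i_x^{-1})\circ\big((i_b\circ f\circ i_a^{-1})\otimes\id_{X\{x\}}\big)$; applying \eqref{eq:graded-tensor-functoriality} rewrites the first as $\lambda(\deg\id_X,\deg i_a^{-1})\,(f\circ i_a^{-1})\otimes i_x^{-1}$ and the second as $\lambda(\deg i_x^{-1},\deg\overline f)\,(f\circ i_a^{-1})\otimes i_x^{-1}$. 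Since $\deg\id_X=0$ and $\deg\overline f=0$, both prefactors are $\lambda(\cdot,0)=1$, so the two sides agree and \eqref{diag:tensor-f-id} commutes on the nose.

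For \eqref{diag:tensor-id-f} I would run the identical reduction, after which the difference becomes visible. Now the nontrivial morphism $f$ sits in the \emph{second} tensor slot, so applying \eqref{eq:graded-tensor-functoriality} to $(\id_X\otimes f)\circ(i_x^{-1}\otimes i_a^{-1})$ produces the genuinely nontrivial factor $\lambda(\deg f,\deg i_x^{-1})=\lambda(a-b,-x)$, whereas the right-hand composite $(i_x^{-1}\otimes i_b^{-1})\circ\big(\id_{X\{x\}}\otimes(i_b\circ f\circ i_a^{-1})\big)$ again picks up only $\lambda(\deg i_b^{-1},0)=1$. Hence the two composites differ precisely by $\lambda(a-b,-x)$, and prepending the prescribed correction $\lambda(a-b,x)$ to the right arrow restores commutativity because $\lambda(a-b,x)\lambda(a-b,-x)=\lambda(a-b,x+(-x))=\lambda(a-b,0)=1$. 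This also makes transparent, conceptually, why \eqref{diag:tensor-f-id} needs no correction while \eqref{diag:tensor-id-f} does: the twist in \eqref{eq:graded-tensor-morphisms}--\eqref{eq:graded-tensor-functoriality} only ``sees'' the degree of the second factor, which is the phenomenon anticipated in the discussion preceding the lemma.

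The computations are routine once set up; the only real care is bookkeeping --- tracking which of the two morphisms occupies which tensor slot at each use of \eqref{eq:graded-tensor-functoriality}, in which order the arguments of $\lambda$ appear, and that the shifts on the codomains telescope away. In the concrete case of a TQFT $\F\colon\kChCob\to\Mod\scalars$ one could instead evaluate both composites on a homogeneous element $m\otimes n$ using \eqref{eq:graded-tensor-morphisms}; this yields the same two $\lambda$-factors and may read more transparently, but arguing via \eqref{eq:graded-tensor-functoriality} has the advantage of working in any $G$-graded monoidal category, which is the generality in which the lemma is stated.
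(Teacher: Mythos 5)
Your proof is correct and follows exactly the route the paper takes: the paper's own proof is just the one-line remark that the lemma ``follows directly from \eqref{eq:graded-tensor-functoriality}'', and your computation simply fills in that calculation (unwinding $\overline f=i_b\circ f\circ i_a^{-1}$, noting $\deg\overline f=0$, and cancelling $\lambda(a-b,x)\lambda(a-b,-x)=1$). Nothing to correct.
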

\begin{proof}
	Follows directly from \eqref{eq:graded-tensor-functoriality}.
\end{proof}

Choose cubes $\mathcal{I}$ and $\mathcal{I'}$ of dimensions $n$ and $n'$ respectively. We define their product as a~cube $\mathcal{I}\otimes\mathcal{I}'$ of dimension $n+n'$ with vertices decorated with tensor products
\begin{align}
	(\mathcal{I}\otimes\mathcal{I}')(\xi\xi') & := \mathcal{I}(\xi) \otimes \mathcal{I}'(\xi'), \\
\intertext{and edges labeled by the~original maps tensored with identity morphisms:}
	(\mathcal{I}\otimes\mathcal{I}')(\zeta\xi')
			&	:= \mathcal{I}(\zeta) \otimes \id_{\mathcal{I}'(\xi')}, \\
	(\mathcal{I}\otimes\mathcal{I}')(\xi\zeta')
			& := \id_{\mathcal{I}(\xi)} \otimes\,\mathcal{I}'(\zeta').
\end{align}
As usual, $\xi$ and $\xi'$ stand for sequences encoding vertices, whereas $\zeta$ and $\zeta'$ encode edges. Notice that $\mathcal I\otimes \mathcal I'$ may not commute even if both $\mathcal I$ and $\mathcal I'$ do.

Consider now link diagrams $D$, $D'$ and their cubes of resolutions $\KhCubeGradedSigned{D}{\epsilon}$, $\KhCubeGradedSigned{D'}{\epsilon'}$ corrected by certain sign assignments $\epsilon\in C^1(I^n;\invScalars*)$ and $\epsilon'\in C^1(I^{n'};\invScalars)$, where $n$ and $n'$ stand for the~number of crossings in $D$ and $D'$ respectively. Lemma~\ref{lem:degree-vs-tensor} suggests which sign assignment to choose for $\KhCubeGraded{D\sqcup D'}$, so that the~corrected cube is isomorphic to $\KhCubeGradedSigned{D}{\epsilon}\rdsum\KhCubeGradedSigned{D'}{\epsilon'}$. Namely, we define $\epsilon\star\epsilon'\in C^1(I^{n+n'};\invScalars)$ as follows:
\begin{align}
		(\epsilon\star\epsilon')(\zeta\xi') &:= \epsilon(\zeta), \\
		(\epsilon\star\epsilon')(\xi\zeta') &:= \begin{cases*}
				(-1)^{\|\xi\|}\permMM^a\permMS^b\epsilon'(\zeta'),& if $D'_{\xi'}$ is a~merge,\\
				(-1)^{\|\xi\|}\permSS^b\permMS^{-a}\epsilon'(\zeta'),& if $D'_{\xi'}$ is a~split.\\
			\end{cases*}
\end{align}
Notice that against the~notation $\epsilon\star\epsilon'$ depends on the~diagram $D'$.

\begin{lemma}\label{lem:KhCube-rdsum}
	The~cochain $\epsilon\star\epsilon'$ is a~sign assignment for $\KhCubeGraded{D\sqcup D'}$, for which the~cube is isomorphic to $\KhCubeGradedSigned{D}{\epsilon}\rdsum\KhCubeGradedSigned{D'}{\epsilon'}$.
\end{lemma}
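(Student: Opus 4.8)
The plan is to build the asserted isomorphism of cubes $\Phi$ explicitly and then deduce the sign assignment property from it. The starting observation is that, as a bare diagram in $\kChCob$ with degree shifts but ignoring signs, $\KhCubeGraded{D\sqcup D'}$ is literally the product cube of $\KhCubeGraded{D}$ and $\KhCubeGraded{D'}$: a vertex of $\KhCubeGraded{D\sqcup D'}$ is the circle collection $D_\xi\sqcup D'_{\xi'}$, an edge is a single saddle on a crossing of $D$ or of $D'$, and the circles of the other diagram are carried along as identity cylinders; as those cylinders have no critical points there is no ordering ambiguity, so $D_\zeta\sqcup\id_{D'_{\xi'}}=D_\zeta\rdsum\id_{D'_{\xi'}}$ and $\id_{D_\xi}\sqcup D'_{\zeta'}=\id_{D_\xi}\rdsum D'_{\zeta'}$. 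The only difference between the two cubes lies in the bookkeeping of degree shifts: the product cube $\KhCubeGradedSigned{D}{\epsilon}\rdsum\KhCubeGradedSigned{D'}{\epsilon'}$ records the two shifts separately on $D_\xi\{w_\xi\}\rdsum D'_{\xi'}\{w'_{\xi'}\}$, whereas $\KhCubeGraded{D\sqcup D'}$ uses the combined shift $(D_\xi\sqcup D'_{\xi'})\{w_\xi+w'_{\xi'}\}$.

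I would then take for the vertex components of $\Phi$ precisely the degree shift isomorphisms $i_{w_\xi+w'_{\xi'}}\circ(i_{w_\xi}^{-1}\otimes i_{w'_{\xi'}}^{-1})$ between these two objects --- the choice singled out by the discussion preceding Lemma~\ref{lem:degree-vs-tensor}. That $\Phi$ intertwines the edge morphisms, up to the scalars prescribed by $\epsilon\star\epsilon'$, is then exactly what Lemma~\ref{lem:degree-vs-tensor} computes: on an edge changing a crossing of $D$ the relevant square is \eqref{diag:tensor-f-id}, which commutes with no correction, matching $(\epsilon\star\epsilon')(\zeta\xi')=\epsilon(\zeta)$; on an edge changing a crossing of $D'$ the square is \eqref{diag:tensor-id-f}, contributing the factor $\lambda$ evaluated on $\chdeg D'_{\zeta'}\in\{(-1,0),(0,-1)\}$ and on $\deg W_\xi=\bigl(\tfrac{\|\xi\|-\ell(\xi)+\ell_0}{2},\tfrac{\|\xi\|+\ell(\xi)-\ell_0}{2}\bigr)$, together with the Koszul sign $(-1)^{\|\xi\|}$ inherent in the tensor product of complexes. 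Spelling this constant out with $\permMM^2=\permSS^2=1$ reproduces the monomials in the definition of $\epsilon\star\epsilon'$ (and shows, in passing, that $\epsilon\star\epsilon'$ is a cubical $1$-cochain valued in $\invScalars$), so $\Phi$ becomes an isomorphism of cubes $\KhCubeGradedSigned{D}{\epsilon}\rdsum\KhCubeGradedSigned{D'}{\epsilon'}\to\KhCubeSigned{D\sqcup D'}{\epsilon\star\epsilon'}$.

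To conclude I would show that the source of $\Phi$ anticommutes and transport anticommutativity through $\Phi$. On a mixed $2$-face (one edge from $D$, one from $D'$) the two composites differ only by the functoriality constant of the graded tensor product, cf.\ \eqref{eq:graded-tensor-functoriality}, and by the Koszul sign; since every edge of the graded cubes is homogeneous of degree $(0,0)$, that constant is $\lambda(0,0)=1$, so only the Koszul sign remains and the face anticommutes. A face lying inside $\KhCubeGradedSigned{D}{\epsilon}$ (resp.\ $\KhCubeGradedSigned{D'}{\epsilon'}$) anticommutes because $\epsilon$ (resp.\ $\epsilon'$) is a sign assignment, the carried cylinders --- resp.\ the $(-1)^{\|\xi\|}$ and monomial factors, which are common to all four edges and cancel out of the face --- doing no harm. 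Hence $\KhCubeSigned{D\sqcup D'}{\epsilon\star\epsilon'}$ anticommutes, i.e.\ $\delta(\epsilon\star\epsilon')=-\psi$, so $\epsilon\star\epsilon'$ is a sign assignment for $\KhCubeGraded{D\sqcup D'}$ and $\Phi$ is the desired isomorphism.

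The routine part I would not carry out is the last sign comparison of the middle paragraph --- matching the constant from \eqref{diag:tensor-id-f} with the case-by-case definition of $\epsilon\star\epsilon'$ and checking that the $\ell_0$-terms cancel. The one genuinely delicate point is the anticommutativity of the source of $\Phi$ on mixed faces: it rests on the edge morphisms having degree $(0,0)$, that is on working with the bigraded rather than the merely $\Z$-graded cube, which is exactly the phenomenon flagged in the introduction --- the naive tensor product of $\Z$-graded Khovanov complexes is not even a chain complex.
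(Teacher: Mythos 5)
Your proposal is correct and follows essentially the same route as the paper: the same vertex-wise isomorphisms $i_{w_\xi+w'_{\xi'}}\circ(i_{w_\xi}^{-1}\otimes i_{w'_{\xi'}}^{-1})$, the same appeal to Lemma~\ref{lem:degree-vs-tensor} to handle the edges, and the same case split with pure faces reducing to $\epsilon$ and $\epsilon'$ and mixed faces treated by transporting anticommutativity through the isomorphism. The only point where the paper is slightly more explicit is at the very end: to pass from ``the mixed face anticommutes'' to $\delta(\epsilon\star\epsilon')(S)=-\psi(S)$ it notes that such a face realizes the disjoint-union relation \eqref{rel:disjoint-union}, so $\psi(S)$ is pinned down by Theorem~\ref{thm:kChCob-nondeg}; your deferred constant-matching computation covers the same ground.
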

\begin{proof}
	Consider the~following family of isomorphisms of vertices of the~cubes:
	\begin{equation}\label{eq:isom-for-disjoint-union}
		\begin{gathered}
		\mathllap{(\KhCubeGraded{D}\rdsum\KhCubeGraded{D'})(\xi\xi') =\;}
			\rnode{top}{D_\xi\{a,b\} \sqcup D_{\xi'}\{a',b'\}}
		\\[4ex]
			\rnode{bot}{(D_\xi\sqcup D'_{\xi'})\{a+a',b+b'\}}
			\mathrlap{\;= \KhCubeGraded{D\sqcup D'}(\xi\xi')}
		\end{gathered}
		\diagline{->}{top}{bot}
		\nbput[labelsep=2pt]{\scriptstyle\cong}
		\naput[labelsep=2pt]{\scriptstyle i_{a+a',b+b'}\circ(i_{a,b}^{-1}\sqcup i_{a',b'}^{-1})}
	\end{equation}

	\noindent
	\wrapfigure[r]<3>{\begin{pspicture}(-3.5em,-1ex)(12em,13ex) 
		\rput(8.5em,10.5ex){\rnode{top}{$\KhCubeGradedSigned{D}{\epsilon}\ldsum\KhCubeGradedSigned{D'}{\epsilon'}$}}
		\rput(8.5em, 1.5ex){\rnode{bot}{$\KhCubeGradedSigned{D\sqcup D'}{\epsilon\ldsum\epsilon'}$}}
		\diagline{->}{top}{bot}\naput[labelsep=2pt]{$\scriptstyle\cong$}
		\dotnode(-3em,12ex){t00}\dotnode(1em,12ex){t01}
		\dotnode(-1em, 9ex){t10}\dotnode(3em, 9ex){t11}
		\dotnode(-3em,3ex){b00}\dotnode(1em,3ex){b01}
		\dotnode(-1em,0ex){b10}\dotnode(3em,0ex){b11}
		\diagline{->}{t00}{t01}\diagline{->}{t01}{t11}
		\diagline{->}{t00}{t10}
		\diagline{->}{b00}{b01}\diagline{->}{b01}{b11}
		\diagline{->}{b00}{b10}\diagline{->}{b10}{b11}
		\diagline{->}{t00}{b00}\diagline{->}{t11}{b11}\diagline{->}{t01}{b01}
		\diagline[border=3\pslinewidth]{->}{t10}{b10}
		\diagline[border=3\pslinewidth]{->}{t10}{t11}
	\end{pspicture}}
	They form an~isomorphism of cubes due to Lemma~\ref{lem:degree-vs-tensor}, so it remains to check $\epsilon\star\epsilon'$ is indeed a~sign assignment. This follows easily if $S$ is a~face spanned by edges of $\KhCubeGraded{D}$ or $\KhCubeGraded{D'}$, as $d(\epsilon\star\epsilon')(S)$ is equal then to $d\epsilon(S)$ or $d\epsilon(S')$ respectively. In a~mixed case consider the~diagram to the~right, formed by the~face $S$, its analogue in $\KhCubeGradedSigned{D}{\epsilon}\rdsum\KhCubeGradedSigned{D'}{\epsilon'}$, and the~isomorphisms \eqref{eq:isom-for-disjoint-union}. The~four vertical squares commute due to Lemma \ref{lem:degree-vs-tensor}, and the~top horizontal square anticommutes. Hence, the~face $S$ anticommutes (the bottom square), and as it represents a~disjoint permutation relation \eqref{rel:disjoint-union} it must be $-d(\epsilon\star\epsilon')(S) = \psi(S)$.
\end{proof}

\begin{proposition}\label{prop:Kh-disjoint-union}
	Given link diagrams $D$ and $D'$ there is an~isomorphism of bigraded complexes $\KhCom(D\sqcup D')$ and $\KhCom(D)\rdsum\KhCom(D')$, which is natural with respect to graded morphisms.
\end{proposition}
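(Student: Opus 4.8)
The plan is to derive the proposition from Lemma~\ref{lem:KhCube-rdsum} by ``collapsing along diagonals'', with a short bookkeeping of the global shifts at the end. Recall that $\KhBracket{D}_\epsilon$ is obtained from the graded cube $\KhCubeGradedSigned{D}{\epsilon}$ by taking, in homological degree $i$, the direct sum of the vertices of weight $i$ and letting the differential be the sum of the signed edge morphisms, as in \eqref{eq:bracket-def}; after the global homological and internal shifts this produces $\KhCom(D)$. This passage from a cube to a complex is manifestly functorial, so a graded isomorphism of cubes induces an isomorphism of the associated complexes in $\catAdd{\kChCob_0}$. Fix sign assignments $\epsilon$ and $\epsilon'$ for $D$ and $D'$. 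By Lemma~\ref{lem:KhCube-rdsum} the cochain $\epsilon\star\epsilon'$ is a sign assignment for $\KhCubeGraded{D\sqcup D'}$ and the cube $\KhCubeGradedSigned{D\sqcup D'}{\epsilon\star\epsilon'}$ is isomorphic, via the degree-shift isomorphisms \eqref{eq:isom-for-disjoint-union}, to the product cube $\KhCubeGradedSigned{D}{\epsilon}\rdsum\KhCubeGradedSigned{D'}{\epsilon'}$. Applying the functor above therefore yields an isomorphism of bigraded complexes between $\KhBracket{D\sqcup D'}_{\epsilon\star\epsilon'}$ and the complex obtained by collapsing $\KhCubeGradedSigned{D}{\epsilon}\rdsum\KhCubeGradedSigned{D'}{\epsilon'}$ along diagonals.

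The next, and I expect main, step is to identify that collapsed complex with the tensor product $\KhBracket{D}_\epsilon\rdsum\KhBracket{D'}_{\epsilon'}$ in the sense of Section~\ref{sec:composite}. On objects there is nothing to do: a vertex $\xi\xi'$ of weight $i$ contributes $D_\xi\sqcup D'_{\xi'}$ (with the inherited degree shifts) and $\|\xi\|+\|\xi'\|=i$, matching the direct-sum-over-$p+q=i$ formula. For the differential one checks that, under the identifications \eqref{eq:isom-for-disjoint-union}, the collapsed edge morphisms on a summand $\xi\xi'$ assemble into $d\otimes\id+(-1)^{\|\xi\|}\id\otimes d'$: the $\KhCubeGraded{D}$-edges contribute $d\otimes\id$ by \eqref{diag:tensor-f-id}, while the $\KhCubeGraded{D'}$-edges contribute $(-1)^{\|\xi\|}$ times a $\lambda$-twist of $\id\otimes d'$ by \eqref{diag:tensor-id-f}; the factor $(-1)^{\|\xi\|}$ is exactly the Koszul sign and the monomial $\lambda$-factor ($\permMM^a\permMS^b$ for a merge, $\permSS^b\permMS^{-a}$ for a split) is exactly the twist --- both being the signs built into $\epsilon\star\epsilon'$. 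This is the delicate point, but it is a direct rerun of the sign analysis already carried out for Lemmas~\ref{lem:degree-vs-tensor} and~\ref{lem:KhCube-rdsum}, so no new ideas are needed.

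It then remains to match the global shifts. Since $n_\pm(D\sqcup D')=n_\pm(D)+n_\pm(D')$, $\ell_0(D\sqcup D')=\ell_0(D)+\ell_0(D')$, and $\ell(D_\xi\sqcup D'_{\xi'})=\ell(D_\xi)+\ell(D'_{\xi'})$, the homological and internal shifts that turn $\KhBracket{D\sqcup D'}_{\epsilon\star\epsilon'}$ into $\KhCom(D\sqcup D')$ agree with those that turn $\KhBracket{D}_\epsilon\rdsum\KhBracket{D'}_{\epsilon'}$ into $\KhCom(D)\rdsum\KhCom(D')$; by Remark~\ref{rmk:half-int-deg-shift} a global degree shift introduces no further signs, so composing with the appropriate degree-shift isomorphisms upgrades the isomorphism above to $\KhCom(D\sqcup D')\cong\KhCom(D)\rdsum\KhCom(D')$. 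Independence of the auxiliary choices of sign assignment follows as usual, since any two produce isomorphic brackets.

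Finally, naturality: every construction used --- forming the cube of resolutions, correcting by a sign assignment, collapsing along diagonals, applying the global shifts --- is functorial in the underlying data, and the isomorphism is given explicitly by the degree-shift maps \eqref{eq:isom-for-disjoint-union}, which commute with any graded morphism by Lemma~\ref{lem:degree-vs-tensor} (diagrams \eqref{diag:tensor-f-id} and \eqref{diag:tensor-id-f}). Hence the isomorphism $\KhCom(D\sqcup D')\cong\KhCom(D)\rdsum\KhCom(D')$ is natural with respect to graded morphisms, completing the proof.
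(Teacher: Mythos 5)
Your proposal is correct and follows essentially the same route as the paper: the isomorphism of corrected cubes from Lemma~\ref{lem:KhCube-rdsum} is collapsed to an isomorphism of brackets $\KhBracket{D\sqcup D'}_{\epsilon\star\epsilon'}\cong\KhBracket{D}_\epsilon\rdsum\KhBracket{D'}_{\epsilon'}$, and the global shifts cause no trouble because a degree-$(0,0)$ differential picks up no $\lambda$-twist (the paper phrases this as $\lambda(a-b,x)=1$ when $a=b$ in \eqref{diag:tensor-id-f}). Your extra bookkeeping of the Koszul sign, the $\lambda$-factors absorbed into $\epsilon\star\epsilon'$, and the naturality via the degree-shift isomorphisms just makes explicit what the paper leaves implicit.
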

\begin{proof}
	The~isomorphism of cubes from Lemma~\ref{lem:KhCube-rdsum} implies that $\KhBracket{D}_\epsilon\rdsum\KhBracket{D'}_{\epsilon'}\cong\KhBracket{D\sqcup D'}_{\epsilon\star\epsilon'}$. The~thesis follows, as a~global degree shift does not affect the~differential: $\lambda(a-b,x)=1$ in \eqref{diag:tensor-id-f} if $a=b$.
\end{proof}

There is a~similar formula for another operation on links. The~\emph{connected sum} $D\connsum D'$ of two oriented link diagrams with basepoints $D$ and $D'$ is given by cutting the~diagrams at the~basepoints and gluing them in such a~way that their orientations agree:
\begin{center}
	\psset{unit=1em}%
	\begin{pspicture}(-7,-1)(7,1.5)
		\rput(-5,0){%
			\psframe[linewidth=0.5pt](-3,-1)(-1,1)\rput(-2,0){$D$}%
			\psframe[linewidth=0.5pt]( 3,-1)( 1,1)\rput( 2,0){$D\mathrlap{\smash'}$}%
			\psarc[linewidth=1pt](-1,0){0.7}{-90}{90}
			\rput(-0.25pt,1.75pt){\psline[linestyle=none,arrowsize=5pt,arrowlength=0.8,arrowinset=0.5]{->}(-0.3,-0.5)(-0.3,0)}
			\psarc[linewidth=1pt](1,0){0.7}{90}{270}
			\rput(0.25pt,-1.75pt){\psline[linestyle=none,arrowsize=5pt,arrowlength=0.8,arrowinset=0.5]{->}( 0.3, 0.5)( 0.3,0)}
		}%
		\rput( 5,0){%
			\psframe[linewidth=0.5pt](-3,-1)(-1,1)\rput(-2,0){$D$}%
			\psframe[linewidth=0.5pt]( 3,-1)( 1,1)\rput( 2,0){$D\mathrlap{\smash'}$}%
			\pscustom[linewidth=1pt]{%
				\moveto(-1,-0.7)\curveto(-0.5,-0.7)(-0.7,-0.4)(0,-0.4)\curveto(0.7,-0.4)(0.5,-0.7)(1,-0.7)\stroke
				\moveto(-1, 0.7)\curveto(-0.5, 0.7)(-0.7, 0.4)(0, 0.4)\curveto(0.7, 0.4)(0.5, 0.7)(1, 0.7)\stroke
			}
			\psline[linestyle=none,arrowsize=5pt,arrowlength=0.8,arrowinset=0.5]{->}(-0.3,-0.4)( 2pt,-0.4)
			\psline[linestyle=none,arrowsize=5pt,arrowlength=0.8,arrowinset=0.5]{->}( 0.3, 0.4)(-2pt, 0.4)
		}%
		\pnode(-5,0){disjoint}
		\pnode( 5,0){connected}
		\diagline[nodesep=3.5]{->}{disjoint}{connected}\diagaput{$\scriptstyle\connsum$}
	\end{pspicture}%
\end{center}
This operation does not depends on the~exact placement of the~basepoints, but only on the~link components that carry them. Clearly, resolutions of $D\connsum D'$ are exactly the~connected sums of resolutions of $D$ and $D'$: $(D\connsum D')_{\xi\xi'} = D_\xi\connsum D_{\xi'}$. Since the~right connected sum $\rcsum$ behaves like the~right disjoint union $\rdsum$, in~particular the~Lemma~\ref{lem:degree-vs-tensor} holds, the~proofs of Lemma~\ref{lem:KhCube-rdsum} and Proposition~\ref{prop:Kh-disjoint-union} can be easily adapted to this operation. We leave it to the~reader to check the~details.

\begin{proposition}\label{prop:Kh-conn-sum}
	Given link diagram $D$ and $D'$ there is an~isomorphism of bigraded complexes $\KhCom(D\connsum D') \cong \KhCom(D)\rcsum\KhCom(D')$.
\end{proposition}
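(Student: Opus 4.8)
The plan is to run the argument of Lemma~\ref{lem:KhCube-rdsum} and Proposition~\ref{prop:Kh-disjoint-union} with the pair $(\sqcup,\rdsum)$ replaced throughout by $(\connsum,\rcsum)$. Three observations, all recorded just before the statement, make this substitution harmless: resolutions of $D\connsum D'$ are the connected sums $(D\connsum D')_{\xi\xi'}=D_\xi\connsum D'_{\xi'}$; the two families of edge cobordisms of $\KhCubeGraded{D\connsum D'}$ are the saddles of $D$, resp.\ of $D'$, connect--summed with identity cylinders on the complementary side, and connect--summing with identities along a vertical line neither adds critical points nor changes the merge/split type of a saddle; and the right connected sum $\rcsum$ slots into the graded tensor/monoidal formalism exactly as the right disjoint union $\rdsum$ does, so that Lemma~\ref{lem:degree-vs-tensor} applies to it verbatim. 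The one genuine difference is that a mixed two--face of $\KhCubeGraded{D\connsum D'}$ now represents an instance of the connected--sum relation \eqref{rel:connected-sum} rather than of \eqref{rel:disjoint-union}; since the two relations carry the same $\lambda$--factor $\permMM^{aa'}\permSS^{bb'}\permMS^{ab'-a'b}$, the combinatorics are unaffected.

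Concretely, I would proceed as follows. First fix, once and for all, the vertical lines along which all the connected sums in sight are formed (through the basepoint circles), so that $\rcsum$ is a well--defined operation on the cobordisms in play and the two edge families of $\KhCubeGraded{D\connsum D'}$ commute with one another up to the obstruction governed by \eqref{rel:connected-sum}. Next define the cochain $\epsilon\star\epsilon'\in C^1(I^{n+n'};\invScalars)$ by the very formula preceding Lemma~\ref{lem:KhCube-rdsum}, and check it is a sign assignment by the same diagram chase: on a face spanned by edges of a single factor, $d(\epsilon\star\epsilon')$ collapses to $d\epsilon$ or $d\epsilon'$; on a mixed face $S$, form the cube built from $S$, from its analogue in $\KhCubeGradedSigned{D}{\epsilon}\rcsum\KhCubeGradedSigned{D'}{\epsilon'}$, and from the degree--shift isomorphisms $i_{a+a',b+b'}\circ(i_{a,b}^{-1}\rcsum i_{a',b'}^{-1})$, observe that the four vertical squares commute by Lemma~\ref{lem:degree-vs-tensor} and that the top square anticommutes (this is where the Koszul sign $(-1)^{\|\xi\|}$ built into the $\rcsum$ of complexes enters), and conclude that the bottom square, namely $S$, anticommutes, which forces $-d(\epsilon\star\epsilon')(S)=\psi(S)$. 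The degree--shift isomorphisms then assemble into an isomorphism of graded cubes $\KhCubeGradedSigned{D}{\epsilon}\rcsum\KhCubeGradedSigned{D'}{\epsilon'}\cong\KhCubeGraded{D\connsum D'}$ corrected by $\epsilon\star\epsilon'$; collapsing both sides along diagonals gives $\KhBracket{D}_\epsilon\rcsum\KhBracket{D'}_{\epsilon'}\cong\KhBracket{D\connsum D'}_{\epsilon\star\epsilon'}$, and applying the global homological and internal degree shifts that define $\KhCom$ introduces no further signs, because every differential of the bracket is a graded morphism and hence the factor $\lambda(a-b,x)$ from \eqref{diag:tensor-id-f} equals $\lambda(0,x)=1$.

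The main obstacle --- indeed the only step that is not pure bookkeeping --- is the first one: one must make the choice of gluing lines coherent over the whole cube and verify that, with this choice, every mixed face really does realise \eqref{rel:connected-sum} with the correct chronological degrees, so that $(\kChCob,\rcsum)$ genuinely fits the graded tensor framework on which Lemma~\ref{lem:degree-vs-tensor} relies. Once that is granted, the remaining steps are literally the computations carried out for $\sqcup$ in Lemma~\ref{lem:KhCube-rdsum} and Proposition~\ref{prop:Kh-disjoint-union}, which is why the text is content to leave them to the reader.
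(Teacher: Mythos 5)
Your proposal is correct and follows exactly the route the paper takes: the paper itself only remarks that $\rcsum$ behaves like $\rdsum$, that resolutions of $D\connsum D'$ are connected sums of resolutions, and that Lemma~\ref{lem:degree-vs-tensor}, Lemma~\ref{lem:KhCube-rdsum} and Proposition~\ref{prop:Kh-disjoint-union} therefore adapt verbatim, leaving the details to the reader. Your write-up supplies those details (same sign assignment $\epsilon\star\epsilon'$, same diagram chase for mixed faces, same collapse and global degree shift), correctly identifying that the only substantive check is that mixed faces realise \eqref{rel:connected-sum} with the same $\lambda$-factor as \eqref{rel:disjoint-union}.
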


Consider now $(2m,2n)$-tangles, i.e.\ embeddings of $n+m$ intervals and circles in $\R^2\times I$ with $2n$ of the~endpoints on $\R^2\times\{0\}$ and $2m$ on $\R^2\times\{1\}$. Given a~$(2m,2n)$-tangle $T$ and a~$(2n,2k)$-tangle $T'$ we can compose them by gluing along the~$n$ edpoints to obtain a~$(2m,2k)$-tangle $T'T$. In particular, a~disjoint union of links correspond to composition of $(0,0)$ tangles, whereas a~connected sum can be interpreted as a~composition of a~$(0,2)$-tangle with a~$(2,0)$-tangle. From this viewpoint, Propositions \ref{prop:Kh-disjoint-union} and \ref{prop:Kh-conn-sum} provide formulas for Khovanov complexes for compositions of $(2m,2n)$-tangles with $m,n\leqslant 1$.

Unfortunately, the~ideas from this paper do not immediately generalize to $m,n>1$. The~main issue is the~lack of distinction by the~new degree of the~two types of faces in a~cube of resolutions labeled in Tab.~\ref{tab:psi-values} with $1$ and $\permMM\permSS$. Hence, for general tangles one needs more structure, which is a~subject of future research.

\section{Module structure and homology}\label{sec:module}
The~even Khovanov homology is known to carry a~module structure over the~Frobenius algebra associated to a~circle \cites{KhDetectsUnlinks,KhPatterns}, and we want to obtain a~similar result for the~generalized complex. In order to have a~universal result, independent of a~chronological TQFT, we shall show $\KhCom(D)$ is a~\emph{module object} over a~certain \emph{algebra object} on a~circle.

Choose a~symmetric monoidal category $\cat{C}$ with a~unit $I$ and symmetry isomorphisms $c_{A,B}\colon A\otimes B\to B\otimes A$.

\begin{definition}\label{def:alg-obj}
	An~\emph{algebra object} in $\cat{C}$ consists of an~object $A$ and an~associative morphism $m\colon A\otimes A\to A$ called a~\emph{product} in $A$. We say that $(A,m)$ is \emph{symmetric} if $m\circ c_{A,A} = m$, and \emph{unital} if the~diagram below commutes
	\begin{equation}
		\begin{diagps}(-6em,-0.5ex)(6em,10ex)
			\hsquares(-5em,0ex)<5em,8ex>|a{npos=0.6}`a{npos=0.45}`b`b`a`a{npos=0.6}`a{npos=0.45}|{<-`->`->`=`->`->`<-}[%
					I\otimes A`A`A\otimes I`%
					A\otimes A`A`A\otimes A;%
					\cong`\cong`%
					\iota\otimes\id``\id\otimes\iota`%
					m`m]
		\end{diagps}
	\end{equation}
	for some morphism $\iota\colon I\to A$.\footnote{
		It can be shown that such a~morphism is unique if it exists.
	}
\end{definition}

One example of an~algebra object is provided by a~circle $\fntCircle$ in the~category $\Z\ChCob$ with the~merge cobordism as a~product. It does not generalize immediately to $\kChCob$, as the~merge is not associative here. However, it is enough to shift the~degree of $\fntCircle$ by $\{-1,0\}$ to obtain an~associative product. It makes the~merge a~graded map.

\begin{lemma}\label{lem:circle-is-alg-obj}
	The~merge cobordism induces on $\fntCircle\{-1,0\}$ a~structure of a~commutative unital algebra object.
\end{lemma}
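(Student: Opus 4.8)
The plan is to verify the three defining properties of a commutative unital algebra object from Definition~\ref{def:alg-obj} for the object $\fntCircle\{-1,0\}$ equipped with the product $m$ induced by the merge cobordism $\textcobordism[2](M-L)$, with unit $\iota$ induced by the birth cobordism $\textcobordism[0](sB)$. First I would check that $m$ is a graded morphism: the merge has chronological degree $(-1,0)$, and the degree shift $\{-1,0\}$ on each of the two tensor factors of the source contributes $(-2,0)$ while the shift on the target contributes $(-1,0)$, so as a morphism $\fntCircle\{-1,0\}\otimes\fntCircle\{-1,0\}\to\fntCircle\{-1,0\}$ it has degree $(-1,0)+(-1,0)-(-2,0)=(0,0)$; similarly the birth has degree $(1,0)$ and becomes a degree $(0,0)$ map $I\to\fntCircle\{-1,0\}$. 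This is the routine bookkeeping that makes the remaining diagrams live in the degree-preserving world where they can even be asked to commute.

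Next I would establish associativity. The unshifted merge fails to be associative in $\kChCob$ only because of the relation \eqref{rel:associativity}, which introduces a factor $\permMM$ when the two compositions $\textcobordism*[3](M-L)(M-L)$ and $\textcobordism*[3](M-L,2)(M-L)$ are compared. The key point is that once the degree shift $\{-1,0\}$ is in place, the associativity square for the algebra object is the associativity square for the graded tensor product, and the definition \eqref{eq:graded-tensor-morphisms} of $f\otimes g$ already builds in the sign $\lambda(\deg g,\deg m)$. One computes that $m\otimes\id$ and $\id\otimes m$, as morphisms of graded objects, pick up exactly the compensating power of $\permMM$ (coming from $\lambda$ applied to the degree $(-1,0)$ of $m$) that cancels the $\permMM$ in \eqref{rel:associativity}. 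So the two routes around the associativity square agree, and this is really the heart of why the degree shift is the right fix — it is also the step I expect to require the most care, since one must track the $\lambda$-factors on the nose against the single $\permMM$ in the chronological relation.

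Then commutativity is immediate: a chronological TQFT sends $\rdsum$ to $\otimes$ and the twist to the symmetry $\tau$, and at the level of $\kChCob$ the symmetry isomorphism $c_{\fntCircle,\fntCircle}$ is represented by the twist cobordism $\textcobordism[2](P)$; since $m\circ\textcobordism[2](P)$ and $m$ represent the same merge cobordism up to the reverse-orientation relation \eqref{rel:reverse-orientation}, and the twist has degree $(0,0)$, the equation $m\circ c_{\fntCircle,\fntCircle}=m$ holds once the $\permMM$ from \eqref{rel:reverse-orientation} is absorbed by the $\lambda$-factor in the symmetry $\tau$, just as in the associativity case. Finally, unitality: the morphism $\iota$ induced by the birth, postcomposed with the merge, is governed by the annihilation--creation relation \eqref{rel:annihilation-creation}, namely $\textcobordism[1](sI)(I) = \textcobordism*[1](slB)(M-L)$, which says precisely that merging in a newly born circle is the identity cylinder; together with the mirror statement on the other side and the degree-$(0,0)$ bookkeeping, this makes the unit triangle in Definition~\ref{def:alg-obj} commute. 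I would close by remarking that uniqueness of $\iota$ is the footnoted general fact, so nothing further is needed.
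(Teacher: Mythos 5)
Your proposal is correct and follows essentially the same route as the paper: the paper's proof verifies associativity by exactly the cancellation you describe, carried out pictorially by moving the degree-shift isomorphisms past the merges (each such exchange costs a $\lambda$-factor, here $\permMM$, which cancels the $\permMM$ of \eqref{rel:associativity}), and it leaves commutativity and unitality as an exercise. Two small points of wording: the compensating factor comes from commuting a shift isomorphism past the \emph{underlying} merge of degree $(-1,0)$ tensored with the shifted spectator circle (cf.\ \eqref{diag:tensor-id-f}), not from ``$\lambda$ applied to the degree of $m$'' --- the shifted product $m$ has degree $(0,0)$, as you yourself computed --- and the ``mirror'' of \eqref{rel:annihilation-creation} is not a relation on the nose: the right-unit composite equals $\permMM$ times the cylinder, with this factor again absorbed by the graded interchange, or one can simply deduce right unitality from left unitality and the commutativity you have already established.
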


Before we prove the~lemma we shall extend our pictorial calculus of cobordisms by two degree shift isomorphisms: $\textcobordism[1]({y=0.25}Sh-)(sI)\colon A\to A\{-1,0\}$ and $\textcobordism[1]({y=0.25}Sh+)(sI)\colon A\{-1,0\}\to A$. Both maps have nontrivial degrees, so that they commute with each other and with other generators only up to scaling by certain scalars controlled by the~function $\lambda$. In particular, each time we switch heights of two bars we have to scale the~cobordism by $\permMM$.

\begin{proof}
	Associativity follows from the~chronological relations, and the~fact that the~two isomorphisms are inverse to each other:
	\begin{equation}
		\textcobordism*[3]%
				({y=0.35}Sh+)({y=0.15}Sh+,2)(sI)(M-L)({size=0.65}Sh-)%
				({y=0.35}Sh+)({y=0.15}Sh+,2)(sI)(M-L)({size=0.65}Sh-)
		=
		\textcobordism*[3]({y=0.75}Sh+)({y=0.5}Sh+,2)(Sh+,3)(I)(M-L)(M-L)({size=0.65}Sh-)
		= \permMM
		\textcobordism*[3]({y=0.75}Sh+)({y=0.5}Sh+,2)(Sh+,3)(I)(M-L,2)(M-L)({size=0.65}Sh-)
		=
		\textcobordism*[3]%
			({y=0.35}Sh+,2)({y=0.15}Sh+,3)(sI)(M-L,2)({size=0.65}Sh-,2)%
			({y=0.35}Sh+)({y=0.15}Sh+,2)(sI)(M-L)({size=0.65}Sh-).
	\end{equation}
	We left as an~exercise to check the~other axioms of an~algebra object.
\end{proof}

\begin{definition}\label{def:mod-obj}
	A~\emph{left module object} over a~unital algebra object $A$ in $\cat{C}$ consists of an~object $M$ and a~morphism $a\colon A\otimes M\to M$ such that the~following diagram commutes:
	\begin{equation}
		\begin{diagps}(-8em,-0.5ex)(6em,12ex)
			\node AAM(-8em,10ex)[A\otimes A\otimes M]	\node AM1(0em,10ex)[A\otimes M]	\node IM(6em,10ex)[I\otimes M]
			\node	AM2(-8em,0ex)[A\otimes M]						\node M(0em,0ex)[M]
			\arrow{->}[AAM`AM1;m\otimes\id]				\arrow{<-}[AM1`IM;\iota\otimes\id]
			\arrow|b|{->}[AAM`AM2;\id\otimes a]		\arrow|b|{->}[AM1`M;m]
			\arrow{->}[AM2`M;m]										\arrow{->}[IM`M;\cong]
		\end{diagps}
	\end{equation}
	The~morphism $a\colon A\otimes M\to M$ is called an~\emph{action} of $A$ on $M$. Given two module objects $M$ and $N$ over $A$ a~morphism $f\colon M\to N$ in $\cat{C}$ is a~\emph{homomorphism of module objects} if it commutes with the~actions, i.e.\ $m_N\circ(\id\otimes f) = f\circ m_M\colon A\otimes M\to N$.
\end{definition}

We define right module objects and homomorphisms between them likewise. Given two algebra objects $A$ and $B$ we say that $M$ is an~$(A,B)$-\emph{bimodule object} if it is a~left $A$-module object, a~right $B$-module object, and the~two actions commute. An~$(A,A)$-bimodule $M$ is \emph{symmetric} if the~diagram below commutes as well:
\begin{equation}
	\begin{diagps}(0em,-0.5ex)(6em,10ex)
		\Dtriangle<6em,8ex>[A\otimes M`M`M\otimes A;`c_{A,M}`]
	\end{diagps}
\end{equation}

Any algebra object is at the~same time a~bimodule object over itself, and so is $\fntCircle\{-1,0\}$. We can generalize this to any family of circles, perhaps with a~shifted degree, as long as we distinguish one of them. To show this we shall again extend our pictorial calculus by adding the~pictures
\begin{equation}
	\psset{unit=1.3}
	\textcobordism[5]({y=0.5,deg=x,size=5}Sh)(I)
\end{equation}
for the~degree shift isomorphisms $(k\fntCircle)\{a\}\to(k\fntCircle)\{a+x\}$, where $k\fntCircle$ stands for a~disjoint union of $k$ circles and $a,x\in\Z\times\Z$. Clearly, we have the~equalities
\begin{gather}
	\psset{xunit=1.3}
	\textcobordism[5]({y=0.4,deg=x,size=5}Sh)(I)(sI)({y=-0.4,deg=y,size=5}Sh)
	=
	\textcobordism[5](sI)(I)({y=-0.75,deg=x+y,size=5}Sh)\mathrlap{,\text{ and}}\\
	\psset{xunit=1.3}
	\textcobordism[5]({y=0.4,deg=x,size=5}Sh)(I)(sI)({y=-0.4,deg=-x,size=5}Sh)
	=
	\textcobordism[5](sI)(I)\mathrlap{.}
\end{gather}

We distinguish one of the~circles in $(k\fntCircle)\{a\}$ by placing a~basepoint $p$ on it, and we define the~left and right actions of $\fntCircle\{-1,0\}$ by merging the~circle with $k\fntCircle$ at $p$ from the~left and right hand side respectively, everything conjugated by the~degree shift isomorphism accordingly:
\begin{equation}
	\psset{xunit=4mm,yunit=6mm}\def\COBxsize{1}\def\COBysize{2}\COBsetstyle
	\begin{centerpict}(0,-0.15)(11,4.15)
		\rput( 0,0)\COBembcircle
		\rput( 6,0)\COBembcircle
		\rput( 6,4)\COBcircle
		\pscustom{%
				\moveto(0,0)\lineto(0,1)
				\curveto(0,2.5)(6,2)(6,3)
				\lineto(6,4)\stroke}%
		\pscustom{%
				\moveto(1,0)\lineto(1,1)
				\curveto(1,1.7)(5,1.8)(5.5,2.0)
				\curveto(6.1,2.1)(6,2.4)(6,1)
				\lineto(6,0)\stroke}%
		\psdot(6.5,-0.1)\psdot(6.5,3.9)\psline(7,0)(7,4)
		\psarrow(5.9,1.8)(5.2,1.85)
		\rput{-20}(0,1){\COBposShift{x=0.45,y=0.15}(0,0)}
		\COBcylinder( 2,0)( 2,4)
		\COBcylinder( 4,0)( 4,4)
		\COBcylinder( 8,0)( 8,4)
		\COBcylinder(10,0)(10,4)
		\COBshift{deg=-a,size=5}(2,0.7)
		\COBshift{deg=a,size=5}(2,3.3)
	\end{centerpict}
	\qquad\text{and}\qquad
	\begin{centerpict}(1,-0.15)(-10,4.15)
		\rput( 0,0)\COBembcircle
		\rput(-6,0)\COBembcircle
		\rput(-6,4)\COBcircle
		\pscustom{%
				\moveto(1,0)\lineto(1,1)
				\curveto(1,2.5)(-5,2)(-5,3)
				\lineto(-5,4)\stroke}%
		\pscustom{%
				\moveto(0,0)\lineto(0,1)
				\curveto(0,1.7)(-4,1.8)(-4.5,2.0)
				\curveto(-5.1,2.1)(-5,2.4)(-5,1)
				\lineto(-5,0)\stroke}%
		\psdot(-5.5,-0.1)\psdot(-5.5,3.9)\psline(-6,0)(-6,4)
		\psarrow(-4.9,1.8)(-4.2,1.85)
		\COBcylinder( -2,0)( -2,4)
		\COBcylinder( -4,0)( -4,4)
		\COBcylinder( -8,0)( -8,4)
		\COBcylinder(-10,0)(-10,4)
		\COBposShift(0,0)
		\COBshift{deg=-a,size=5}(-10,1.0)
		\COBshift{deg=a,size=5}(-10,3.3)
	\end{centerpict}
\end{equation}
The~pictures are not totally symmetric---compare the~orders of degree shift isomorphisms.

\begin{lemma}\label{lem:cube-of-bimod}
	The~above actions equip $k\fntCircle$ with a~structure of a~symmetric bimodule object over $\fntCircle\{-1,0\}$. In particular, given a~based link diagram $L$ the~cube $\KhCubeGraded{D}$ is a~cube of symmetric bimodule objects and module homomorphisms.
\end{lemma}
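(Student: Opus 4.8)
The plan is to verify, for the two actions pictured above, the axioms of Definition~\ref{def:mod-obj} together with the bimodule and symmetric-bimodule conditions, and then to propagate the resulting structure along the edges of the cube. Because both actions are given by merging the extra circle into $k\fntCircle$ at the basepoint $p$, pre- and post-composed with degree shift isomorphisms, every axiom is an identity between cobordisms supported in a neighbourhood of $p$ on the distinguished circle: the other $k-1$ circles contribute only identity cylinders and may be ignored. Thus each axiom reduces to the corresponding statement for the one-circle algebra object $\fntCircle\{-1,0\}$ of Lemma~\ref{lem:circle-is-alg-obj}, decorated with extra bars. Explicitly, associativity of the left (and of the right) action is associativity of the merge, already established in the proof of Lemma~\ref{lem:circle-is-alg-obj}; unitality is the annihilation--creation relation \eqref{rel:annihilation-creation}, a birth followed by the merge at $p$ being the identity cylinder on the based circle; commutativity of the left and right actions reduces to associativity together with commutativity of the product on $\fntCircle\{-1,0\}$, i.e.\ to Lemma~\ref{lem:circle-is-alg-obj} and \eqref{rel:associativity}; and the symmetric-bimodule square is again commutativity of the product, now read through the symmetry $c_{A,M}$ of $\kChCob$ via \eqref{rel:reverse-orientation}. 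On both sides of each identity the underlying surface is the same up to diffeomorphism, so by Theorem~\ref{thm:kChCob-nondeg} it remains only to match the scalars in $\scalars$.

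The scalar bookkeeping is the one genuinely non-formal step. Switching the heights of two degree shift bars multiplies a cobordism by $\permMM$, as recorded just before the proof of Lemma~\ref{lem:circle-is-alg-obj}, while reordering the critical points of the merges contributes powers of $\permMM$, $\permSS$, $\permMS$ through the function $\lambda$ by \eqref{rel:connected-sum}. The pictures for the left and right actions were drawn with the degree shift isomorphisms in \emph{opposite orders} exactly so that these two contributions cancel: in the commuting-actions square the $\permMM$ coming from \eqref{rel:associativity} is absorbed by the $\permMM$ from sliding one bar past the other, and in the symmetry square the $\lambda$-scalar built into $c_{A,M}$ cancels the scalar produced by \eqref{rel:reverse-orientation} together with the bar reorderings. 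Carrying out this comparison for each of the finitely many bimodule axioms completes the first assertion. The hard part will be precisely this: everything else is a direct appeal to an already-proved relation, and only here is genuine care needed to see that the asymmetric placement of the bars is the right one.

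For the ``in particular'', note first that the basepoint of $D$ lies on an arc away from all crossings, so in every resolution $D_\xi$ it lies on a unique circle; taking that circle as the distinguished one, the first part makes each vertex $\KhCubeGraded{D}_\xi = D_\xi$ a symmetric $\fntCircle\{-1,0\}$-bimodule object. It remains to check that every edge cobordism $D_\zeta$ is a homomorphism of bimodule objects, i.e.\ that the square with $D_\zeta$ along the top and bottom and the action $a$ down the sides commutes. But $D_\zeta$ is a saddle supported in a small ball around a crossing, hence disjointly from $p$, so the two composites $D_\zeta\circ a$ and $a\circ(\id\otimes D_\zeta)$ are represented by one and the same cobordism, namely the one performing the saddle surgery and the merge-at-$p$ surgery, in the two possible orders. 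Reordering these independent surgeries multiplies the cobordism by $\lambda(\deg a,\deg D_\zeta)$; since $a$ and $D_\zeta$ are both graded morphisms of $\KhCubeGraded{D}$, that is, of degree $(0,0)$, this scalar equals $\lambda\big((0,0),(0,0)\big)=1$, so the square commutes on the nose, and likewise for the right action. Hence $\KhCubeGraded{D}$ is a cube of symmetric bimodule objects and module homomorphisms.
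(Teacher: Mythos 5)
Your first paragraph, reducing the bimodule axioms for $k\fntCircle$ to the one-circle computations of Lemma~\ref{lem:circle-is-alg-obj} plus scalar bookkeeping with the shift bars, is essentially what the paper does and is fine. The gap is in the ``in particular'' step, where you argue that each edge cobordism is a module homomorphism. You claim that reordering the merge-at-$p$ and the saddle $D_\zeta$ costs $\lambda(\deg a,\deg D_\zeta)=\lambda\big((0,0),(0,0)\big)=1$ because both are graded morphisms of $\KhCubeGraded{D}$. That is not how the scalars arise in $\kChCob$: exchanging the heights of two critical points is governed by the relations \eqref{rel:connected-sum} and \eqref{rel:disjoint-union}, whose scalar is $\lambda$ of the degrees of the \emph{underlying cobordisms}---$(-1,0)$ for the merge and $(-1,0)$ or $(0,-1)$ for the saddle---and the formal degree shifts on the objects change only the bookkeeping degree of the morphisms, not these relations. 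So the naive exchange costs $\permMM$ or $\permMS^{\pm1}$, not $1$. If gradedness alone forced trivial interchange, then $d\sqcup\id$ and $\id\sqcup d$ would already commute in $\KhCom(D\sqcup D')$ (all edges of the graded cube have degree $(0,0)$), which the paper explicitly denies at the start of Section~\ref{sec:composite}; that failure is exactly why the sign assignment $\epsilon\star\epsilon'$ carries the twisting factors $\permMM^a\permMS^b$ and $\permSS^b\permMS^{-a}$.

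What actually makes the square commute is the conjugation by degree shift isomorphisms that you drop at this step: the shifts at the two ends of an edge differ, $(a'-a,b'-b)=-\chdeg D_\zeta$, and sliding the merge from one side of the saddle to the other produces the genuine-degree scalar from \eqref{rel:connected-sum}--\eqref{rel:disjoint-union}, while re-expressing the conjugation by $\{a,b\}$ at the source as conjugation by $\{a',b'\}$ at the target (inserting and cancelling pairs of shift isomorphisms and moving them past the merge and the saddle, as controlled by Lemma~\ref{lem:degree-vs-tensor} and the $\lambda$-rule for the bars) produces exactly the inverse factor. This cancellation, carried out pictorially in four steps, is the paper's proof and is precisely the computation your argument skips; the statement remains true, but as written your justification that ``the square commutes on the nose'' does not establish it.
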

\begin{proof}
	All axioms of a~symmetric bimodule for $k\fntCircle$ can be checked easily using the~pictorial calculus of cobordism as in the~proof of Lemma~\ref{lem:circle-is-alg-obj}. Hence, vertices of $\KhCubeGraded{D}$ are decorated with symmetric bimodules objects. To show that an~edge morphism preserves the~left action of $\fntCircle\{-1,0\}$ we represent it in the~following computation by a~saddle conjugated with degree shift isomorphisms:
	\begin{equation}
		\psset{xunit=4mm,yunit=7mm}\def\COBxsize{1}\def\COBysize{1.7}\COBsetstyle
		\begin{split}
		\pictActionFirst
		 &= \pictActionSecond \\
		 &= \pictActionThird
		  = \pictActionForth.
		\end{split}
	\end{equation}
	The~case of the~right action is proved likewise.
\end{proof}

\begin{corollary}\label{cor:KhCom-module}
	The~generalized Khovanov complex $\KhCom(D)$ of a~based link diagram $D$ is a~complex of symmetric bimodule objects over $\fntCircle\{-1,0\}$.
\end{corollary}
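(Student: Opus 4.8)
The plan is to read the corollary off Lemma~\ref{lem:cube-of-bimod} by following how $\KhCom(D)$ is assembled from the graded cube; the $d^2=0$ part is already known, so only the module structure has to be checked. Recall that $\KhBracket{D}_\epsilon$ is the cube $\KhCubeGradedSigned{D}{\epsilon}$ collapsed along diagonals: the $i$-th chain group $\KhBracket{D}_\epsilon^i=\bigoplus_{|\xi|=i}D_\xi\{\cdots\}$ is the direct sum in $\catAdd{\kChCob_0}$ of the weight-$i$ vertices, and $d^i|_{D_\xi}=\sum_{\zeta\colon\xi\to\xi'}\epsilon(\zeta)D_\zeta$ is a $\scalars$-linear combination of the (degree-shifted) edge cobordisms. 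First I would note that $\catAdd{\kChCob_0}$ — with $\rdsum$ extended entrywise as its tensor product and the twist as its symmetry — is a symmetric monoidal $\scalars$-linear category in which $\fntCircle\{-1,0\}$ is a commutative unital algebra object by Lemma~\ref{lem:circle-is-alg-obj}, and that a finite biproduct of symmetric bimodule objects is again a symmetric bimodule object, with both actions and the symmetry isomorphism taken entrywise: the axioms of Definition~\ref{def:mod-obj} together with the symmetry square are equalities of morphisms that hold for a biproduct exactly when they hold summand by summand. Combined with Lemma~\ref{lem:cube-of-bimod}, which equips each vertex $D_\xi\{\cdots\}$ with a symmetric $\fntCircle\{-1,0\}$-bimodule structure, this makes every $\KhBracket{D}_\epsilon^i$ a symmetric bimodule object over $\fntCircle\{-1,0\}$.

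Next I would verify that the differential is a homomorphism of bimodule objects. Lemma~\ref{lem:cube-of-bimod} already states that each edge cobordism $D_\zeta$, once the degree shifts of its source and target vertices are applied, is a homomorphism of bimodule objects; multiplying it by the unit $\epsilon(\zeta)\in\invScalars$ changes nothing, since the action morphisms are $\scalars$-linear and scalars lie in the centre of the morphism spaces of $\catAdd{\kChCob_0}$. A finite sum of bimodule homomorphisms is a bimodule homomorphism, and a matrix whose entries are bimodule homomorphisms between biproducts is again one; hence each $d^i\colon\KhBracket{D}_\epsilon^i\to\KhBracket{D}_\epsilon^{i+1}$ commutes with both actions, so $\KhBracket{D}_\epsilon$ is a complex of symmetric bimodule objects over $\fntCircle\{-1,0\}$.

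It remains to pass from $\KhBracket{D}_\epsilon$ to $\KhCom(D)$, which involves only a shift of homological degree by $n_-$ (a harmless reindexing) and the global internal degree shift by $\left(\tfrac{n_+-\ell_0}{2}-n_-,\tfrac{n_++\ell_0}{2}-n_-\right)$ applied uniformly to all chain groups. Under the canonical identification of each $X\{g\}$ with $X$ via $i_g$, the differential is unchanged — as in the proof of Proposition~\ref{prop:Kh-disjoint-union}, a global shift does not affect it because it is a degree-$(0,0)$ morphism, the scalar $\lambda(a-b,x)$ of \eqref{diag:tensor-id-f} being trivial for $a=b$ — and the bimodule structure is transported along the same identification, its axioms surviving because the maps involved are isomorphisms. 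Therefore $\KhCom(D)$ is a complex of symmetric bimodule objects over $\fntCircle\{-1,0\}$, as claimed.

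Almost all of this is bookkeeping; the one point that genuinely needs the earlier work is the compatibility of the degree shift isomorphisms with the actions. The action on $D_\xi\{a,b\}$ is the merge at the basepoint conjugated by degree shift isomorphisms in a prescribed order, and the identity $d\circ a=a\circ(\id\otimes d)$ works out only because the conjugating isomorphisms attached to the two vertices of an edge cancel against one another. That cancellation is precisely the displayed computation in the proof of Lemma~\ref{lem:cube-of-bimod}, so no new obstacle arises, and the corollary is a formal consequence of that lemma together with the definition of $\KhCom(D)$.
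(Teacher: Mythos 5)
Your proposal is correct and follows the same route as the paper, which states this corollary as an immediate consequence of Lemma~\ref{lem:cube-of-bimod}: the vertices are symmetric bimodule objects, the (sign-corrected) edge morphisms are bimodule homomorphisms, and the passage to $\KhCom(D)$ is a harmless global degree shift. You have merely made explicit the routine bookkeeping (biproducts of bimodule objects, $\scalars$-linearity of the actions, triviality of $\lambda$ on degree-$(0,0)$ morphisms) that the paper leaves implicit.
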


The~bimodule structure on $\KhCom(D)$ is a~link invariant. Indeed, one can always perform Reidemeister moves beyond a~small neighborhood of the~basepoint, perhaps using the~isotopy through infinity, and the~chain homotopy equivalences associated to these Reidemeister moves commute with the~action of $\fntCircle\{-1,0\}$. In particular, we can move the~basepoint freely along a~component of a~link.

On the~other hand, moving a~basepoint to a~different component of a~link may change the~module structure. Following the~idea from \cite{KhDetectsUnlinks} we can choose a~basepoint on every component of a~link, which results in a~bimodule structure on $\KhCom(D)$ over the~algebra object $\bigsqcup_c(\fntCircle\{-1,0\})$ consisting of as many copies of the~shifted circle, as there are components in $L$. Again, this structure descends to homology, but we have to work harder to proof this structure is invariant under Reidemeister move: with more that one basepoint we cannot avoid passing them through crossings.

\begin{theorem}\label{thm:inv-module-structure}
	Given a~based link diagram $D$, the~bimodule structure on $\KhCom(D)$ is preserved up to isomorphism when the~basepoint is moved through a~crossing:
	\begin{equation}
		\begin{centerpict}(-0.5,-0.5)(0.5,0.5)
			\pscurve(-0.5,-0.5)(-0.2,0.2)(0.5,0.5)
			\psline[border=3\pslinewidth](-0.5,0.5)(0.5,-0.5)\psdot(0.1,-0.1)
		\end{centerpict}
		\to/<->/
		\begin{centerpict}(-0.5,-0.5)(0.5,0.5)
			\pscurve(-0.5,-0.5)(0.2,-0.2)(0.5,0.5)
			\psline[border=3\pslinewidth](-0.5,0.5)(0.5,-0.5)\psdot(-0.1,0.1)
		\end{centerpict}
		\qquad\text{and}\qquad
		\begin{centerpict}(-0.5,-0.5)(0.5,0.5)
			\psline(-0.5,0.5)(0.5,-0.5)\psdot(0.1,-0.1)
			\pscurve[border=3\pslinewidth](-0.5,-0.5)(-0.2,0.2)(0.5,0.5)
		\end{centerpict}
		\to/<->/
		\begin{centerpict}(-0.5,-0.5)(0.5,0.5)
			\psline(-0.5,0.5)(0.5,-0.5)\psdot(-0.1,0.1)
			\pscurve[border=3\pslinewidth](-0.5,-0.5)(0.2,-0.2)(0.5,0.5)
		\end{centerpict}
	\end{equation}
	In particular, given a~link $L$ with $c$ components, the~bimodule structure on $\KhCom(L)$ over $\bigsqcup_c(\fntCircle\{-1,0\})$ is an~invariant of $L$.
\end{theorem}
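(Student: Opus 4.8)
The plan is to reduce the statement to a local computation: the two based diagrams in question agree outside a disc $B$ that contains only the distinguished crossing together with the two candidate positions of the basepoint, so one only has to compare the actions of $\fntCircle\{-1,0\}$ inside $B$. First I would place the distinguished crossing last in the enumeration of crossings and present $\KhBracket{D}_\epsilon$ as the mapping cone $\cone(\KhBracket{D_0}_{\epsilon_0} \to \KhBracket{D_1}_{\epsilon_1})$ along that crossing, the connecting map $S$ being assembled from its saddle cobordisms exactly as in the Reidemeister I argument in the proof of Theorem~\ref{thm:invariance}. The underlying complexes for the ``before'' and ``after'' positions of the basepoint are literally equal; only the action differs, and by Corollary~\ref{cor:KhCom-module} each action is block diagonal with respect to the cone, given on each block by the merge at the basepoint circle conjugated by degree-shift isomorphisms. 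The geometric input, read off from the two smoothings of the crossing, is that for every partial resolution of the remaining crossings the before- and after-basepoint circles $P$, $Q$ coincide in exactly one of the two resolutions of the distinguished crossing and are merged by its saddle in the other, so the discrepancy between the two actions sits precisely where $S$ is nontrivial.

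Next I would write down the comparison morphism $\Psi$ realising the basepoint slide, following the $\Z_2$ construction of \cite{KhDetectsUnlinks} but now over $\scalars$, as a matrix with respect to the cone decomposition. Its diagonal entries are the degree-$(0,0)$ morphisms obtained from identity cobordisms conjugated by the degree-shift isomorphisms, carrying the scalar twists $\permMM^{a}\permSS^{b}\permMS^{c}$ that $\lambda$ forces whenever a bar is slid past a saddle or a merge, as in the proof of Lemma~\ref{lem:circle-is-alg-obj}; its off-diagonal entry $\eta$ is a short cobordism supported near the basepoint post-composed with $S$ --- morally the cobordism that drags the basepoint across the saddle. The shape of $\eta$ and the exponents on the diagonal are pinned down by demanding two things, which I would verify using the pictorial calculus of Section~\ref{sec:chron-tqft} in the style of the chain of cobordism identities in the proof of Lemma~\ref{lem:cube-of-bimod}: that $\Psi$ be a chain map, i.e.\ that the cone relation $\psi_1 S - S\psi_0 = \pm(d_1\eta + \eta d_0)$ hold; and that $\Psi$ intertwine the before- and after-actions, the failure of the diagonal to do so on the block with $P \neq Q$ being corrected through $\eta$. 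Being triangular with invertible diagonal, $\Psi$ is invertible, hence an isomorphism of complexes of bimodule objects over $\scalars$ (and if the intertwining holds only up to an explicit chain homotopy, it still induces the required isomorphism of module objects after passing to homology). The under-strand case runs identically, only the labelling of which of $P$, $Q$ carries the basepoint in a given resolution changes.

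Finally I would assemble the invariance statement by combining this local result with what is already proved. Given a link $L$ with $c$ components and two systems of basepoints, one per component, an ambient isotopy carrying the first system to the second decomposes at the level of diagrams into: Reidemeister moves performed away from all basepoints, whose chain homotopy equivalences from Theorem~\ref{thm:invariance} commute with every action because the cobordisms inducing them are disjoint from the basepoint regions; slides of a basepoint along an arc, under which the action is literally unchanged since the circle through the basepoint in each resolution is unaffected; and slides of a basepoint through a crossing, handled by the first part of the theorem. Composing these isomorphisms shows the two systems equip $\KhCom(L)$ with isomorphic bimodule structures over $\bigsqcup_c(\fntCircle\{-1,0\})$; applying the functor of Example~\ref{ex:Kh-tqft}, or any chronological TQFT respecting the relations of Theorem~\ref{thm:invariance}, this descends to homology and yields the asserted link invariant.

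The hard part will be the sign bookkeeping in the construction of $\Psi$: choosing $\eta$ and the exponents on the diagonal so that the chain-map identity and the module-homomorphism identity hold at the same time with consistent signs over $\scalars$. Over $\Z_2$, where \cite{KhDetectsUnlinks} works, none of this is visible; here it is the whole content, so I expect most of the effort to go into running the relevant faces through Table~\ref{tab:psi-values} and the chronological relations \eqref{rel:reverse-orientation}--\eqref{rel:disjoint-union}, just as was needed to establish Lemmas~\ref{lem:circle-is-alg-obj} and~\ref{lem:cube-of-bimod}.
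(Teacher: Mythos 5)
Your global assembly (Reidemeister moves performed away from the basepoints, slides along arcs, slides through crossings) is fine, but the core local construction is not just incomplete---the ansatz itself fails, and the failure is visible already in the even specialization $\Zev$, before any chronological sign bookkeeping. You propose a comparison map $\Psi$ that is triangular with respect to the cone at the distinguished crossing, with diagonal entries equal to identity cobordisms twisted by monomials $\permMM^{a}\permSS^{b}\permMS^{c}$ and a single saddle-supported correction $\eta$ (either as an off-diagonal chain-map entry or as a chain homotopy). Test this on a one-crossing diagram of the unknot with the basepoint slid through the kink: the two resolutions sit in adjacent homological degrees, so a chain endomorphism has no off-diagonal component; your $\Psi$ degenerates to monomial scalars on the two chain groups and $\eta$ can only be a homotopy, which does not affect the induced map on homology. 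Hence such a $\Psi$ would force the two actions to agree on the homology of the unknot. They do not: as the paper computes (and as one checks directly with the Khovanov TQFT), sliding the basepoint through a crossing changes the action by the conjugation $v_{\pm}\mapsto\pm v_{\pm}$, i.e.\ the action of $v_-$ changes sign while that of $v_+$ does not, and this is not a scalar multiple of the identity. The discrepancy between the two actions is therefore not a monomial twist correctable by a saddle-supported homotopy; it is the automorphism $\varphi=\id-\permMM\permMS^{-1}(\text{punctured torus})$ of $\fntCircle\{-1,0\}$, which is invisible in your plan. Over $\Z_2$, where Hedden--Ni work, and in the odd theory, the torus term dies and $\varphi=\id$, which is why a naive transcription of their argument looks plausible; in the even theory over $\Z$ it does not, so no amount of running faces through Table~\ref{tab:psi-values} will make your $\Psi$ both a chain map and an intertwiner.

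For contrast, the paper's route supplies exactly the ingredients your plan lacks: it reduces to a based unknot via the connected-sum formula (Proposition~\ref{prop:Kh-conn-sum}), realizes the passage of the acting circle across the strand as the composite of the Reidemeister~II homotopy equivalences (Fig.~\ref{fig:passing-circle}), simplifies the resulting map using the \emph{4Tu} relation to $\varphi$ acting on the circle alone, and then makes two checks with no counterpart in your proposal: that $\varphi^2=\id$, which uses the nondegeneracy statement of Theorem~\ref{thm:kChCob-nondeg} (the element $\permMM-\permSS$ annihilates a punctured torus), and that $\varphi$ is a morphism of algebra objects (again \emph{4Tu}), so that the two bimodule structures are genuinely isomorphic rather than merely related by an uncontrolled twist. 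A smaller inaccuracy in your reduction: in the resolution where the two basepoint arcs are distinct, the circles through them need not be distinct globally, so ``merged by its saddle'' is not quite right; the connected-sum reduction to the unknot is what lets the paper avoid tracking this global identification.
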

\begin{proof}
	Invariance of the~bimodule structure under passing a~dot through a~crossing is equivalent to saying that the~following pairs of cobordisms
	\begin{gather}\psset{unit=7.5mm}
	\begin{movie}[h](2,2){4}
		\movieclip{
			\pscurve(-0.8,-0.8)(-0.5,0.2)(0.3, 0.8)
			\psbezier[border=3\pslinewidth](-0.8, 0.8)(-0.2, 0)(0,-0.2)(0.8,-0.8)
			\pscircle[border=3\pslinewidth](0.3,0.3){0.3}
		}
		\movieclip{
			\psline(-0.8,-0.8)(0.8, 0.8)
			\psbezier[border=3\pslinewidth](-0.8, 0.8)(-0.2, 0)(0,-0.2)(0.8,-0.8)
			\pscircle[border=3\pslinewidth](0.3,0.3){0.3}
		}
		\movieclip{
			\pscurve(-0.8,-0.8)(0.2,-0.5)(0.8,0.3)
			\psbezier[border=3\pslinewidth](-0.8, 0.8)(-0.2, 0)(0,-0.2)(0.8,-0.8)
			\pscircle[border=3\pslinewidth](0.3,0.3){0.3}
		}
		\movieclip{
			\pscurve(-0.8,-0.8)(0.2,-0.5)(0.8,0.3)
			\pscustom{\moveto(-0.8,0.8)
				\curveto(-0.2,0)(0,0)(0,0.3)
				\psarcn(0.3,0.3){0.3}{180}{-90}
				\curveto(0,0)(0,-0.2)(0.8,-0.8)
				\stroke
				\stroke[linecolor=white,linewidth=7\pslinewidth]
		}}
	\end{movie}
	\quad\to/<->/\quad
	\begin{movie}[h](2,2){2}
		\movieclip{
			\pscurve(-0.8,-0.8)(0.2,-0.5)(0.8,0.3)
			\psbezier[border=3\pslinewidth](-0.8, 0.8)(-0.2, 0)(0,-0.2)(0.8,-0.8)
			\pscircle[border=3\pslinewidth](0.3,0.3){0.3}
		}
		\movieclip{
			\pscurve(-0.8,-0.8)(0.2,-0.5)(0.8,0.3)
			\pscustom{\moveto(-0.8,0.8)
				\curveto(-0.2,0)(0,0)(0,0.3)
				\psarcn(0.3,0.3){0.3}{180}{-90}
				\curveto(0,0)(0,-0.2)(0.8,-0.8)
				\stroke
				\stroke[linecolor=white,linewidth=7\pslinewidth]
		}}
	\end{movie}
	\\[1ex]
	\psset{unit=7.5mm}
	\begin{movie}[h](2,2){4}
		\movieclip{
			\psbezier(-0.8, 0.8)(-0.2, 0)(0,-0.2)(0.8,-0.8)
			\pscircle(0.3,0.3){0.3}
			\pscurve[border=3\pslinewidth](-0.8,-0.8)(-0.5,0.2)(0.3, 0.8)
		}
		\movieclip{
			\psbezier(-0.8, 0.8)(-0.2, 0)(0,-0.2)(0.8,-0.8)
			\pscircle(0.3,0.3){0.3}
			\psline[border=3\pslinewidth](-0.8,-0.8)(0.8, 0.8)
		}
		\movieclip{
			\psbezier(-0.8, 0.8)(-0.2, 0)(0,-0.2)(0.8,-0.8)
			\pscircle(0.3,0.3){0.3}
			\pscurve[border=3\pslinewidth](-0.8,-0.8)(0.2,-0.5)(0.8,0.3)
		}
		\movieclip{
			\pscustom{\moveto(-0.8,0.8)
				\curveto(-0.2,0)(0,0)(0,0.3)
				\psarcn(0.3,0.3){0.3}{180}{-90}
				\curveto(0,0)(0,-0.2)(0.8,-0.8)
				\stroke
			}
			\pscurve[border=3\pslinewidth](-0.8,-0.8)(0.2,-0.5)(0.8,0.3)
		}
	\end{movie}
	\quad\to/<->/\quad
	\begin{movie}[h](2,2){2}
		\movieclip{
			\psbezier(-0.8, 0.8)(-0.2, 0)(0,-0.2)(0.8,-0.8)
			\pscircle(0.3,0.3){0.3}
			\pscurve[border=3\pslinewidth](-0.8,-0.8)(0.2,-0.5)(0.8,0.3)
		}
		\movieclip{
			\pscustom{\moveto(-0.8,0.8)
				\curveto(-0.2,0)(0,0)(0,0.3)
				\psarcn(0.3,0.3){0.3}{180}{-90}
				\curveto(0,0)(0,-0.2)(0.8,-0.8)
				\stroke
			}
			\pscurve[border=3\pslinewidth](-0.8,-0.8)(0.2,-0.5)(0.8,0.3)
		}
	\end{movie}\end{gather}
	induce isomorphic operations on complexes. For that it is enough to show that placing a~circle on one side of a~strand is the~same as placing it on the~other side and moving over or under the~piece of the~link. We shall prove this only when the~link diagram is the~unknot---the~general case then follows from Proposition~\ref{prop:Kh-conn-sum}, as every link $L$ with a~basepoint is a~connected sum of a~based unknot with $L$.
	
	We begin with computing the~chain map induced by passing a~circle over the~unknot---it is given by a~composition of the~chain homotopy equivalences used in the~proof of invariance of the~chain complex under the~second Reidemeister move---see the~diagram in Fig.~\ref{fig:passing-circle}, in which we show only the~essential fragment of the~unknot. Instead of drawing the~cobordisms, which would make the~diagram illegible, we used the~symbols $\mu, \Delta, \eta, \epsilon$ to denote respectively a~merge, a~split, a~birth, and a~death, with the~exception of two morphisms that are of a~particular interest to us. In addition we use the~following conventions:
	\begin{itemize}
		\item the~two crossings in \fntCircOver\ are decorated with arrows pointing inwards,
		\item when enumerating circles, the~one being moved is always put first.
	\end{itemize}
	With the~chosen decoration by arrows, the~commutativity cocycle $\psi$ takes the~value $\permMM\permSS$ on the~middle square, see Tab.~\ref{tab:psi-values} on page~\pageref{tab:psi-values}.
	
	\begin{figure}[t]
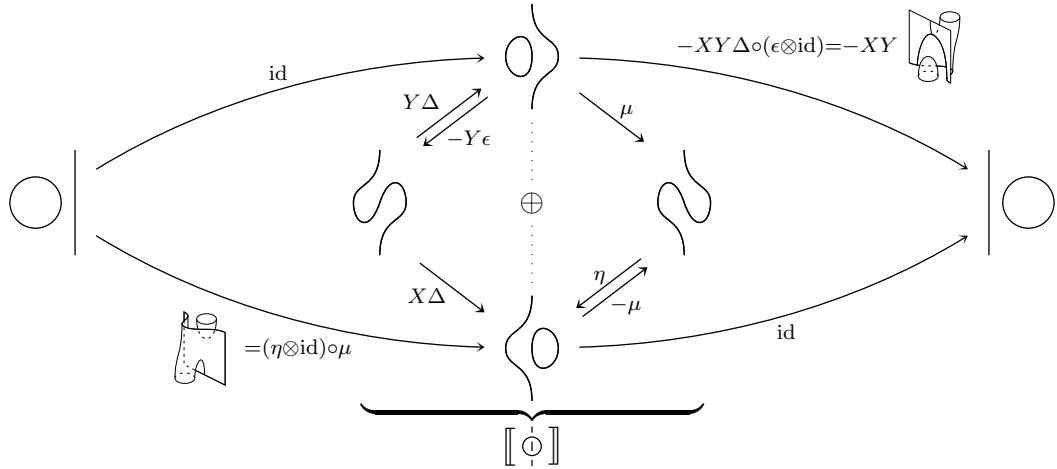

	\begin{displaymath}
		\psset{linewidth=0.5pt}\begin{array}{lcr}
		&\rnode{01}{\tangleCircOverhv}&	\\[2ex]
		\rnode{left}{\tangleCircLeft}			\hskip 3cm&
		\rnode[l]{00}{\tangleCircOverhh}\hskip 3cm
		\rnode[r]{11}{\tangleCircOvervv}	&\hskip 3cm
		\rnode{right}{\tangleCircRight}	\\[2ex]
		&\underbrace{\hskip 17.5mm\rnode{10}{\tangleCircOvervh}\hskip 17.5mm}_{\displaystyle\KhBracket*{\fntCircOver}}&
		\end{array}
		\diagline[linestyle=dotted,nodesep=5pt]{-}{01}{10}\ncput*{\oplus}
		\everypsbox{\scriptstyle}
		\diagline[offset=2pt]{->}{00}{01}\naput[npos=0.55]{\permSS\Delta}
		\diagline[offset=2pt]{->}{01}{00}\naput[labelsep=2pt,npos=0.45]{-\permSS\epsilon}
		\diagline{->}{00}{10}\nbput[labelsep=1pt,npos=0.55]{\permMM\Delta}
		\diagline{->}{01}{11}\naput[labelsep=1pt,npos=0.45]{\mu}
		\diagline[offset=-2pt]{->}{10}{11}\nbput[labelsep=2pt,npos=0.4]{-\mu}
		\diagline[offset=-2pt]{->}{11}{10}\nbput[labelsep=2pt,npos=0.6]{\eta}
		\diagarc[arcangle= 15,angleA= 45,angleB=180]{->}{left}{01}\naput[labelsep=2pt]{\id}
		\diagarc[arcangle=-15,angleA=-45,angleB=180]{->}{left}{10}\nbput[labelsep=-2ex]{\psset{unit=1.25}\fntCobPassLeftNonID = (\eta\otimes\id)\circ\mu}
		\diagarc[arcangle= 15,angleB= 135,angleA=0]{->}{01}{right}\naput[labelsep=-2ex]{-\permMM\permSS\Delta\circ(\epsilon\otimes\id) = -\permMM\permSS\!\!\psset{unit=1.25}\fntCobPassRightNonID}
		\diagarc[arcangle=-15,angleB=-135,angleA=0]{->}{10}{right}\nbput[labelsep=2pt]{\id}
	\end{displaymath}
	\caption{The~chain map associated with a~circle moving over a~link diagram.}%
	\label{fig:passing-circle}%
	\end{figure}

	The~left homomorphism is the~inclusion $f\colon \KhBracket*{\fntCircLeft} \to \KhBracket*{\fntCircOver}$, and the~right one is the~retraction $g\colon \KhBracket*{\fntCircOver} \to \KhBracket*{\fntCircRight}$ from the~proof of invariance of the~complex under the~second Reidemeister move; the~two backward maps in the~middle complex are pieces of chain homotopies (compare with Fig.~\ref{fig:R-II}). Regarding this diagram as in $\kChCob$ and placing the~circle acting on the~knot, i.e.\ the~one visible in full, as the~left most one we compute the~total map
	\begingroup\psset{unit=1cm}%
	\begin{equation}\label{eq:total-passing-map}
		\omega = \pictPassLower - \permMM\permSS\pictPassUpper,
	\end{equation}
	where again the~convention is that the~orienting arrows points towards left or back, and deaths are oriented clockwise. 	The~expression \eqref{eq:total-passing-map} simplifies under the~relation \emph{4Tu} into a~disjoint union of a~cobordism acting on a~circle with a~vertical wall:
	\begin{equation}\psset{unit=0.9}\begin{split}
		\pictPassLower - \permMM\permSS\pictPassUpper
			&= \permMM\permMS^{-1}\left(\permMM\pictPassLowerTu - \permMS\pictPassUpperTu\right) \\
			&= \permMM\permMS^{-1}\left(\permMS\pictPassIdTu - \permSS\pictPassTorusTu\right) 
			= \pictPassId - \permMM\permMS^{-1}\pictPassTorus.
	\end{split}\end{equation}
	\endgroup
	Denote by $\varphi\colon\fntCircle\{-1,0\}\to\fntCircle\{-1,0\}$ the~cobordism acting on the~circle. A~direct computation shows $\varphi^2=\id$
	\begin{equation}
		\varphi^2
		  = \textcobordism[1]({y=0.2}Sh+)(sI)(I)(Sh-)
		  - 2\permMM\permMS^{-1}
		  		\textcobordism[1]({size=0.45,y=0.15}Sh+)(sD)(Tu)({size=0.65,y=-0.15}Sh-)
		  + \permMS^{-1}(\permMM+\permSS)
	  			\textcobordism[1]({size=0.45,y=0.15}Sh+)(sD)(Tu)({size=0.65,y=-0.15}Sh-)
		  = \textcobordism[1]({y=0.2}Sh+)(sI)(I)(Sh-).
	\end{equation}

	\noindent\wrapfigure[r]{%
		\textcobordism[2]({y=0.3}Sh+)({y=0.15}Sh+,2)(sI)(B2)(M-L)(slTu)(M-L)({size=0.65}Sh-)%
	}
	so that $\varphi$ is an~isomorphism. In the~above computation we used the~fact that the~polynomial $\permMM-\permSS$ annihilates a~punctured torus, see Theorem~\ref{thm:kChCob-nondeg}. It remains to check that it is a~morphism of algebra objects, i.e.\ $\varphi\circ m = m\circ(\varphi\rdsum\varphi)$ where $m$ is the~merge cobordism
	$m:=\textcobordism[2]({y=0.35}Sh+)({y=0.15}Sh+,2)(sI)(M-L)({size=0.65}Sh-)$.
	This equality follows from the~\emph{4Tu} relation applied to the~cobordism shown to the~right. We left it to the~reader to check the~details.
	
	\parshape=0	
	The~other case, when a~circle is moved below the~unknot, results exactly in the~same map.
\end{proof}

Now choose your favorite chronological TQFT $\F$ that satisfies the~relations \emph{S}, \emph{T}, and \emph{4Tu}. It intertwines the~disjoint union with the~tensor product over $\scalars$, and the~connected sum with a~tensor product over the~algebra $A':=\F(\fntCircle)\{1,0\}$; it follows directly from Lemma~\ref{lem:circle-is-alg-obj} that $A'$ is an~associative algebra.

A~punctured torus vanishes in the~odd Khovanov homology, making $\varphi=\id$. Hence, the~action does not depend on the~placement of the~circle.
	
In the~even case, one computes using the~Khovanov's TQFT that $\F\varphi$ preserves $v_+$, but it takes $v_-$ to $-v_-$. In other words, $\F\varphi$ is the~conjugation on $A$ defined as $\bar v_{\pm} := \pm v_{\pm}$. Hence, to define the~action of the~algebra $A:=\F(\fntCircle)$ properly, we can color the~regions of the~knot diagram $D$ black and white in a~checkerboard manner; if the~circle is merged to $D$ from a~white region use the~usual multiplication, but conjugate $A$ first if the~circle is merged from a~black region.

Lemma~\ref{lem:cube-of-bimod} together with Theorem~\ref{thm:inv-module-structure} implies that given an~$c$-component link $L$ with a~diagram $D$ the~homology $\Kh(L) := H(\F\KhCom(D))$ admits a~module structure over the~algebra $(A')^{\otimes c}$ that is independent on the~diagram chosen. We are now ready to state the~formulas for generalized Khovanov homology of composite links.

\begin{theorem}\label{prop:H-union-sum}
	Given two link diagrams $D$, $D'$, and a~chronological TQFT $\F$ there is an~isomorphism of complexes $\F\KhCom(D\sqcup D')\cong\F\KhCom(D)\underset{\scalars}\otimes\F\KhCom(D')$. Moreover, if the~diagrams are based, then $\F\KhCom(D\connsum D')\cong\F\KhCom(D)\underset{A'}\otimes\F\KhCom(D')$.
\end{theorem}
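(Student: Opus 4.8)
The plan is to obtain both isomorphisms by applying the chosen chronological TQFT $\F$ to the complex-level statements already established in Propositions~\ref{prop:Kh-disjoint-union} and~\ref{prop:Kh-conn-sum}, and then recognizing $\F$ of a right disjoint union as a tensor product over $\scalars$ and $\F$ of a right connected sum as a tensor product over $A'$. The only thing that needs watching is bookkeeping of Koszul-type signs and of the $\lambda$-factors introduced by the graded tensor product; since every differential in our complexes lives in $\kChCob_0$, i.e.\ has degree $(0,0)$, each factor $\lambda(\deg d,\,\cdot\,)$ collapses to $1$, so no unexpected signs appear, and the only signs that survive are the $(-1)^{\|\xi\|}$ built into the adapted sign assignments $\epsilon\star\epsilon'$ (resp.\ its connected-sum analogue), which $\F$ carries over verbatim as scalars.

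First I would treat the disjoint union. Proposition~\ref{prop:Kh-disjoint-union} provides an isomorphism $\KhCom(D\sqcup D')\cong\KhCom(D)\rdsum\KhCom(D')$ of bigraded complexes in $\catAdd{\kChCob_0}$, natural with respect to graded morphisms. Since $\F$ is an additive graded functor with $\F(W\rdsum W')=\F(W)\otimes\F(W')$ by definition of a chronological TQFT, applying $\F$ sends the right-hand side to the complex whose $i$-th group is $\bigoplus_{p+q=i}\F\KhCom^p(D)\otimes_{\scalars}\F\KhCom^q(D')$ with differential $d\otimes\id+(-1)^p\id\otimes d'$; the sign $(-1)^p$ is precisely the image of the $(-1)^{\|\xi\|}$ in $\epsilon\star\epsilon'$ used in Lemma~\ref{lem:KhCube-rdsum}, and no other $\lambda$-factor intervenes because $d,d'$ have degree $(0,0)$. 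This is exactly the tensor product complex $\F\KhCom(D)\otimes_{\scalars}\F\KhCom(D')$, and functoriality of $\F$ preserves naturality, giving the first claim.

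For the connected sum the structure of the argument is identical once one knows that $\F$ intertwines $\rcsum$ with the relative tensor product over $A'=\F(\fntCircle)\{1,0\}$. By Corollary~\ref{cor:KhCom-module} and Lemma~\ref{lem:cube-of-bimod}, $\KhCom(D)$ and $\KhCom(D')$ are complexes of symmetric bimodule objects over the algebra object $\fntCircle\{-1,0\}$, and Proposition~\ref{prop:Kh-conn-sum} identifies $\KhCom(D\connsum D')$ with $\KhCom(D)\rcsum\KhCom(D')$. The substantive point to verify is that merging the two based circles realizes, at the level of collections of based circles, the relative tensor product: $D_\xi\rcsum D'_{\xi'}$ is the coequalizer of the two ways of acting by $\fntCircle\{-1,0\}$ on the merged factor, and $\F$ carries this coequalizer presentation to the usual coequalizer $\F(D_\xi)\otimes_{A'}\F(D'_{\xi'})$ — equivalently, $\F(\fntCircle)^{\otimes k}\otimes_{A'}\F(\fntCircle)^{\otimes k'}$ is freely generated by one copy of $\F(\fntCircle)$ for the merged strand together with the remaining factors, the degree shifts being distributed exactly as in the definition of the left and right actions. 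Granting this, $\F$ applied to the isomorphism of Proposition~\ref{prop:Kh-conn-sum} yields $\F\KhCom(D\connsum D')\cong\F\KhCom(D)\otimes_{A'}\F\KhCom(D')$, with the Koszul signs again coming from the $(-1)^{\|\xi\|}$ in the adapted sign assignment. The main obstacle is precisely this identification $\F(M\rcsum N)\cong\F M\otimes_{A'}\F N$: one must make rigorous that $\rcsum$ of bimodule objects is a relative tensor product and that $\F$ preserves the relevant coequalizers, tracking carefully the degree-shift isomorphisms $i_\bullet$ (whose reordering costs powers of $\permMM$ in our pictorial calculus) so that the acting algebra is exactly $A'=\F(\fntCircle)\{1,0\}$ from Lemma~\ref{lem:circle-is-alg-obj} and not a re-shifted variant.
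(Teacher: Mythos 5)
Your proposal follows essentially the same route as the paper: the paper's proof is just the observation that the first isomorphism follows from Proposition~\ref{prop:Kh-disjoint-union} together with monoidality of $\F$ (a chronological TQFT sends $\rdsum$ to the graded tensor product), and the second from Proposition~\ref{prop:Kh-conn-sum} together with the bimodule structure of Corollary~\ref{cor:KhCom-module}, which identifies $\F$ of a connected sum with the tensor product over $A'$. Your additional bookkeeping of the $(-1)^{\|\xi\|}$ signs, the $\lambda$-factors, and the coequalizer description of $\rcsum$ only makes explicit what the paper leaves implicit, so the argument is correct and matches the paper's proof.
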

\begin{proof}
	The~first isomorphism follows from Proposition~\ref{prop:Kh-disjoint-union} and monoidality of $\F$. The~second is a~consequence of Proposition~\ref{prop:Kh-conn-sum} and Corollary~\ref{cor:KhCom-module}.
\end{proof}

The~sign assignment $\epsilon\star\epsilon'$ we chose for $\KhCubeGraded{D\sqcup D'}$ implies that
\begin{equation}\label{eq:diff-for-tensor}
	d(x\otimes y) = \begin{dcases*}
		dx\otimes y + (-1)^i \permMM^a\permMS^b\, x\otimes dy, & if $d\otimes\id$ is a~merge,\\
		dx\otimes y + (-1)^i \permSS^a\permMS^b\, x\otimes dy, & if\/ $\id\otimes d$ is a~split,
\end{dcases*}
\end{equation}
for $x\otimes y\in\F\KhBracket{D\sqcup D'}\cong\F\KhBracket{D}\otimes\F\KhBracket{D'}$. Since generalized Khovanov complexes are obtained from the~brackets by not necessarily integral shifts, one must be careful with a~choice of an~isomorphism $\F\KhBracket{D}\otimes\F\KhBracket{D'}\cong\F\KhCom(D)\otimes\F\KhCom(D')$. Indeed, taking a~tensor product of the~degree shift isomorphisms may require square roots of the~generators $\permMM$, $\permSS$, and $\permMS$. For example, in the~following diagram
\begin{equation}
	\begin{diagps}(0,-0.5ex)(15em,12ex)
		\square<15em,10ex>[%
			\F\KhBracket\fntCircle \otimes \F\KhBracket\fntCircle`
			\F\KhBracket{\fntCircle \fntCircle}`
			\F\KhCom(\fntCircle) \otimes \F\KhCom(\fntCircle)`
			\F\KhCom(\fntCircle \fntCircle);
			\cong`i_{(-1/2,1/2)}\otimes i_{(-1/2,1/2)}`i_{(-1,1)}`\cong]
	\end{diagps}
\end{equation}
the~bottom map takes $v_+\otimes v_+$ into $\permMM^{1/2}\permMS^{1/2} v_+\otimes v_+$. To overcome this problem, we replace the~left isomorphism with the~naive map $x\otimes y \mapsto i(x)\otimes i'(y)$ where $i\colon \KhBracket{D} \to \KhCom(D)$ and $i \colon \KhBracket{D'} \to \KhCom(D')$ are the~appropriate degree shift isomorphisms. Although not a~canonical one, it is still a~chain map.

Finally, the~above discussion apply also to the~connected sum of links. In~particular, a~similar formula to \eqref{eq:diff-for-tensor} is true for the~differential in $\F\KhCom(D\connsum D')$.

\end{document}